\definecolor{violet}{rgb}{0.0,0.2,0.7}
\definecolor{rouge2}{rgb}{0.8,0.0,0.2}
\newtheorem{theorem}{Theorem}[section]
\newtheorem{lemma}[theorem]{Lemma}
\newtheorem{prop}[theorem]{Proposition}
\theoremstyle{definition}
\newtheorem{definition}[theorem]{Definition}
\newtheorem{example}[theorem]{Example}
\theoremstyle{remark}
\newtheorem{rem}[theorem]{Remark}
\title[polarized varieties with high nef value]{A note on polarized varieties with high nef value}
\author{Zhining Liu}
\address{Université Côte d’Azur, CNRS, LJAD, France}
\email{zhining.liu@univ-rennes1.fr, zhining.liu.math@gmail.com}
\date{\today}                                           % Activate to display a given date or no date
\DeclareMathOperator{\Hom}{\mathscr{H}\text{\kern -3pt {\calligra\large om}}\,}
\DeclareMathOperator{\num}{\equiv_{\mathrm{num}}}
\DeclareMathOperator{\NE}{\overline{\mathrm{NE}}}
\DeclareMathOperator{\cond}{\mathfrak{cond}}
\DeclareMathOperator{\reg}{\mathrm{reg}}
\begin{document}

\maketitle
\begin{abstract}
We study the classification problem for polarized varieties with high nefvalue. We give a complete list of isomorphism classes for normal polarized varieties with high nefvalue. This generalizes classical work on the smooth case by Fujita, Beltrametti and Sommese. As a consequence we obtain that  polarized varieties with slc singularities and high nefvalue,  are birationally equivalent to projective bundles over nodal curves.

%{\bf Keywords:} polarized varieties, nefvalue, slc singularities,
\end{abstract}

\tableofcontents

\section{Introduction}
A projective variety $X$ together with an ample line bundle $L$ on $X$ is called a polarized variety and is denoted by $(X,L)$. A classical result on polarized varieties is the Kobayashi-Ochiai theorem:

\begin{theorem}[Generalized Kobayashi-Ochiai Theorem, \protect{\emph{cf.} \citep[Theorem~3.1.6]{BS95}}]
\label{KOG}
Let $X$ be an $n$-dimensional connected normal projective scheme and $L$ an ample line bundle on $X$. Then we have 
\begin{itemize}
\item $(X,L)\cong (\mathbb{P}^n,\mathcal{O}_{\mathbb{P}^n}(1))$ if and only if $K_X+(n+1)L\num \mathcal{O}_X$;
\item $(X,L)\cong (Q,\mathcal{O}_Q(1))$ where $Q\subset \mathbb{P}^{n+1}$ is a hyperquadric in $\mathbb{P}^{n+1}$ if and only if $K_X+nL\num \mathcal{O}_X$.
\end{itemize}
\end{theorem}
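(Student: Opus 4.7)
The ``only if'' direction is immediate from the Euler sequence on $\mathbb{P}^n$, which gives $\omega_{\mathbb{P}^n}\cong\mathcal{O}(-n-1)$, and from adjunction for a smooth quadric $Q\subset\mathbb{P}^{n+1}$, which gives $\omega_Q\cong\mathcal{O}_Q(-n)$. For the converse, my plan is induction on $n=\dim X$. The base case $n=1$ is direct: a normal projective curve is smooth, and the two numerical conditions combined with $\deg K_X=2g-2$ and ampleness of $L$ force $g=0$ together with $\deg L=1$ or $\deg L=2$, yielding $(\mathbb{P}^1,\mathcal{O}(1))$ or the plane conic $(\mathbb{P}^1,\mathcal{O}(2))\cong(Q,\mathcal{O}_Q(1))$.

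The inductive step cuts $X$ with a general prime divisor $D\in|L|$. Adjunction reads $K_D\num(K_X+L)|_D$, so the hypothesis $K_X+(n+1)L\num 0$ descends to $K_D+nL|_D\num 0$ on $D$, which is the first case of the theorem in dimension $n-1$; similarly $K_X+nL\num 0$ descends to $K_D+(n-1)L|_D\num 0$, again of the projective-space type. By induction one identifies $(D,L|_D)$, and to conclude it remains to pin down $L^n$ and $h^0(X,L)$ — namely $L^n=1$ and $h^0(X,L)=n+1$ in the $\mathbb{P}^n$ case, and $L^n=2$ and $h^0(X,L)=n+2$ in the quadric case — and then to show that the map $\varphi_{|L|}$ embeds $X$ onto the expected model.

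Two technical points need handling in the normal (rather than smooth) setting, and these are where I expect the principal difficulty to lie. First, one must exhibit a $D\in|L|$ that is itself a normal prime divisor on which adjunction applies; this requires both that $|L|$ be rich enough to cut by, and a Seidenberg-type Bertini theorem preserving normality. Richness of $|L|$ I would obtain via Kawamata-Viehweg vanishing on a resolution $f\colon Y\to X$ applied to $f^{*}L$, together with a Riemann-Roch / Euler-characteristic computation, and then push sections down to $X$ using normality. Second, one must \emph{integrate} the identification on $D$ back up to $X$: given $(D,L|_D)\cong(\mathbb{P}^{n-1},\mathcal{O}(1))$ (resp. the quadric), one checks that $\varphi_{|L|}$ on $X$ is an embedding of the expected degree by analyzing its restriction to $D$ and the base-locus behaviour near the singularities of $X$. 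This last step is the main technical obstacle, and it is where the normality hypothesis on $X$ (via Hartogs-type extension of sections and connectedness of general hyperplane sections) is used most essentially.
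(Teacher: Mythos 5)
First, a point of reference: the paper does not prove \cref{KOG} at all — it is imported verbatim from \citep[Theorem~3.1.6]{BS95} and used as a black box — so there is no internal argument to measure yours against; what follows is an assessment of your sketch on its own terms.

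Your ladder induction is the classical Fujita-style route, but it contains one outright error and one unresolved gap that is really the whole content of the theorem. The error: if $K_X+nL\equiv_{\mathrm{num}}0$ and $D\in|L|$, adjunction gives $K_D+(n-1)L|_D\equiv_{\mathrm{num}}0$ with $\dim D=n-1$, i.e.\ $K_D+\dim(D)\,L|_D\equiv_{\mathrm{num}}0$, which is the \emph{quadric} case one dimension down, not ``again of the projective-space type'' as you write. The induction still closes (your base case covers both branches), but the quadric branch must then carry \emph{singular} normal quadrics through the induction — a general hyperplane section of a rank-$3$ quadric cone is again a rank-$3$ quadric cone — so both the ``only if'' direction and the identification step cannot be phrased only for smooth quadrics via the Euler sequence and smooth adjunction as you do; one needs the dualizing-sheaf adjunction for normal hypersurfaces and must allow all normal quadrics (rank $\geq 3$) as outputs.

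The gap: everything hinges on producing a normal irreducible $D\in|L|$, hence on knowing $h^0(X,L)$ is large (ultimately $n+1$, resp.\ $n+2$), and your proposed route — Kawamata--Viehweg on a resolution $f\colon Y\to X$ — does not go through when $X$ is merely normal. Writing $f^{*}L=K_Y+\bigl(f^{*}(L-K_X)-\sum a_iE_i\bigr)$, the discrepancy term ruins the nef-and-big (or klt) hypothesis of the vanishing theorem unless $X$ has at worst klt singularities; and without rational singularities one cannot recover $h^0(X,L)=h^0(Y,f^{*}L)$ from $\chi(Y,f^{*}L)$, since the sheaves $R^if_{*}\mathcal{O}_Y$ need not vanish. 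Since normality is the only hypothesis, this step — together with the final ``integration'' showing $\varphi_{|L|}$ embeds $X$ onto $\mathbb{P}^n$ or a quadric, which you explicitly defer — is exactly where the theorem lives, and the sketch does not supply it.
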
 

 To study polarized varieties, Fujita introduced the $\Delta$-genus $\Delta(X,L):=n+L^n-h^0(X,L)$ of polarized varieties, which encodes the dimension of the variety $X$ and $L^n$, and develops classification theories for polarized varieties with small $\Delta$-genus under certain assumptions on the singularities of $X$ and positivity on $L$. For Fujita's work, we refer to \citep[Chapter~1]{fujita_1990}.

%The study of polarized varieties has a natural counterpart in the study in the $\mathbb{Q}$-Fano foliations. In \citep{Hor23}, by studying the positivity of twisted cotangent sheaves H\"{o}ring established a Kobayashi-Ochiai type theorem for foliations (\citep[Corollary~1.2]{Hor23}) which gives an upper bound of the index of a $\mathbb{Q}$-Fano foliation $\mathcal{F}$.  

When a foliation $\mathcal{F}$ is algebraically integrable, one can define naturally general log leaves of $\mathcal{F}$ (\emph{cf.} \citep[Definition~3.11]{AD14}). A \textit{general log leaf} $(\tilde{F},\tilde{\Delta})$ comprises a normalization of the closure of a general leaf $F$ of $\mathcal{F}$ and an effective Weil $\mathbb{Q}$-divisor $\tilde{\Delta}$. Let $e:\tilde{F}\rightarrow F$ be the normalization map. Then $\tilde{\Delta}$ is given by $K_{\tilde{F}}+\Delta\num e^{\ast}K_{\mathcal{F}}$. By studying the geometry of general log leaves in \citep{AD14}, Araujo and Druel obtained a version of Kobayashi-Ochiai theorem for $\mathbb{Q}$-Fano foliations (\citep[Theorem~1.2]{AD14}). We also refer to \citep[Corollary~1.2]{Hor23} for a more general statement. %It turns out that understanding the log general leaves  helps to study of algebraically integrable foliations. 
This motivates us to consider classification problem for $(X,\Delta)$, where $X$ is a variety and $\Delta$ a Weil $\mathbb{Q}$-divisor.

When an algebraically integrable foliation $\mathcal{F}$ is $\mathbb{Q}$-Fano, we have the equality \begin{center}
$K_{\tilde{F}}+\Delta \num i_{\mathcal{F}}(e^{\ast}H)$.
\end{center} 
Hence one may very well try to establish a pair version of \cref{KOG}. In fact, Fujino and Miyamoto proved the following:
\begin{prop}[\protect{\citep[Corollary~1.3]{FM21}}]
\label{FM21}
Let $(X,\Delta)$ be a projective semi-log canonical pair such that $X$ is connected.. Assume that $(K_X+\Delta)$ is not nef and that $(K_X+\Delta)\num rD$ for some Cartier divisor $D$ on $X$ with $r>n=\dim(X)$. Then $X$ is isomorphic to $\mathbb{P}^n$ with $\mathcal{O}_X(D)=\mathcal{O}_{\mathbb{P}^n}(-1)$ and $(X,\Delta)$ is Kawamata log terminal.
\end{prop}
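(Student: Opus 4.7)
The strategy is to apply the cone and contraction machinery for slc pairs (in the form developed by Fujino) to produce a short extremal rational curve, use the Cartier hypothesis on $D$ to pin down the length of the extremal ray exactly, and then invoke the generalized Kobayashi-Ochiai theorem (\cref{KOG}) on a normalization.

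First, as $(K_X+\Delta)$ is not nef, the slc cone theorem yields a $(K_X+\Delta)$-negative extremal ray $R\subset\NE(X)$, and the corresponding length theorem produces a rational curve $C$ with $[C]\in R$ and
\[
0<-(K_X+\Delta)\cdot C\leq n+1.
\]
The numerical relation $K_X+\Delta\num rD$ implies $-(K_X+\Delta)\cdot C=r\cdot(-D\cdot C)$, where $-D\cdot C$ is a positive integer because $D$ is Cartier. Combining with $r>n$ one gets
\[
n<r\leq r\cdot(-D\cdot C)=-(K_X+\Delta)\cdot C\leq n+1,
\]
which forces $-D\cdot C=1$ and hence $r\in(n,n+1]$. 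In particular the length of $R$ strictly exceeds $\dim X$.

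Second, an Ionescu-Wi\'sniewski type inequality applied to the extremal contraction $\varphi_R\colon X\to Y$ then forces every fiber of $\varphi_R$ to have dimension at least $-(K_X+\Delta)\cdot C-1>n-1$, so $Y$ must be a point. Consequently $\NE(X)$ is the single ray $R$ and $-D$, being positive on $R$, is ample. Thus $-(K_X+\Delta)\num r(-D)$ with $-D$ ample and $r>n$, putting us in position to apply a generalized Kobayashi-Ochiai statement; after passing to the normalization $X^{\nu}\to X$ and using the length-one property $-D\cdot C=1$ to rule out a nontrivial conductor contribution, \cref{KOG} applied to $X^{\nu}$ with the polarization $-D$ identifies $X\cong\mathbb{P}^n$ and $\mathcal{O}_X(D)=\mathcal{O}_{\mathbb{P}^n}(-1)$.

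Finally, writing $\Delta\num(n+1-r)H$ for $H$ the hyperplane class and $n+1-r\in[0,1)$, the effectivity of $\Delta$ forces each of its prime components to have coefficient strictly less than one, so $(X,\Delta)$ is klt. The hardest part, to my mind, is establishing the slc analogues of the length theorem and the Ionescu-Wi\'sniewski inequality (and controlling the conductor during the normalization step); this is exactly the point where the non-normal, possibly reducible nature of slc pairs forces one to go beyond the standard normal-case MMP techniques.
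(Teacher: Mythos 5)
This proposition is not proved in the paper at all: it is quoted verbatim from Fujino--Miyamoto \citep[Corollary~1.3]{FM21} and used as a black box, so there is no internal argument to compare yours against. Measured against the actual proof in \citep{FM21}, your outline has the right overall shape (cone theorem, length estimate, contraction to a point, Kobayashi--Ochiai on the normalization), but the ingredients you defer to the last sentence --- the cone and contraction theorems, the length estimate, and the Ionescu--Wi\'sniewski inequality for slc pairs --- are precisely the content of Fujino's theory of quasi-log schemes on which \citep{FM21} rests; deferring them is deferring the theorem. Note also that the length bound you quote, $-(K_X+\Delta)\cdot C\le n+1$, is not the one available in this generality (the known bound for lc/slc and quasi-log canonical pairs is $2\dim X$). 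Your deduction $-D\cdot C=1$ survives this correction, since $-D\cdot C\ge 2$ would give $r(-D\cdot C)\ge 2r>2n$, but the claim $r\in(n,n+1]$ cannot be obtained at that stage; it only follows a posteriori from $X\cong\mathbb{P}^n$ and $\Delta\ge 0$.

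Two of the steps you do write out contain genuine gaps. First, the normalization step is not an application of \cref{KOG}: on a connected component $X_i^\nu$ of $X^\nu$ one has $K_{X_i^\nu}+\Delta_i+\bar D_i\equiv -rA_i$, where $\bar D_i$ is the conductor and $A_i=-\nu^{\ast}D|_{X_i^\nu}$ is ample, so what is needed is a Kobayashi--Ochiai theorem for lc \emph{pairs} (essentially the normal, irreducible case of the very proposition being proved), together with an argument that $X$ is irreducible and that the conductor vanishes; \cref{KOG} as stated carries no boundary, and ``the length-one property rules out a nontrivial conductor contribution'' is an assertion rather than an argument (it does work, but only after each component is identified with $(\mathbb{P}^n,\mathcal{O}_{\mathbb{P}^n}(1))$, because a nonzero reduced $\bar D_i$ would then have degree $\ge 1>n+1-r$). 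Second, ``each prime component of $\Delta$ has coefficient $<1$, hence $(X,\Delta)$ is klt'' is a non sequitur: an lc pair with all coefficients $<1$ need not be klt (four half-lines through a point of $\mathbb{P}^2$). The correct deduction is that $\deg\Delta=n+1-r<1$ forces $\mathrm{mult}_p\Delta<1$ at every point of $\mathbb{P}^n$, whence kltness by the standard estimate $\mathrm{lct}_p\ge 1/\mathrm{mult}_p\Delta$; Fujino and Miyamoto instead show directly that the quasi-log structure has no qlc centres, which yields normality and kltness simultaneously.
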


The result of Fujino and Miyamoto assumes mild singularites on the  pair $(X,\Delta)$ and a divisibility condition of the log canonical bundle $K_X+\Delta$. However, with a foliation $\mathcal{F}$, its log general leaf $(\tilde{F},\tilde{\Delta})$ is \textit{a priori} just normal. On the other hand, the classical results of classification theory in \citep[Chapter~7.2]{BS95} do not need divisibility assumption. 
However we do need $-K_X$ is very positive. Thus one may try to weaken the conditions and consider the classification problems: 
\begin{enumerate}
\item Classify the triple $(X,\Delta, L)$ where $(X,\Delta)$ is log canonical, $L$ is ample and $K_X+(\dim(X)-1)L \notin \mathrm{Pseff}(X)$;
\item Classify the pair $(X,L)$ where $X$ is a projective variety with singularities wilder than normal, $L$ is ample and $K_X+(\dim(X)-1)L \notin \mathrm{Pseff}(X)$.
\end{enumerate}

In order to achieve these goals, we study the more general class of \textit{quasi-polarized varieties} and follow an approach of Andreatta in \citep{And13}.
%We call a projective variety $X$ together with a nef and big line bundle $L$ a \textit{quasi-polarized variety}.
 For a quasi-polarized variety $(X,L)$ where $X$ is $\mathbb{Q}$-factorial and has canonical singularities, we may run a MMP which contracts all $L$-trivial extremal rays and get a polarized variety $(X',L')$ (see \cref{r}). By using Andreatta's result \cref{b} which describes the general fibers of extremal contractions, we can reduce the problem of classifying $(X',L')$ with high nefvalue to the problem of classifying polarized variety with $\Delta$-genus zero. We have the following classification:

\begin{theorem}
\label{Canonicalcase}
Let $X$ be a variety with canonical $\mathbb{Q}$-factorial singularities and $L$ a nef and big line bundle on $X$. Suppose $K_X+(n-1)L\notin \mathrm{Pseff}(X)$. Then we have one of the following cases:
\begin{enumerate}
\item $(X,L)\sim_{\mathrm{bir}} (\mathbb{P}^n,\mathcal{O}_{\mathbb{P}^n}(1))$;
\item $(X,L)$ is birational equivalent to a $(\mathbb{P}^{n-1},\mathcal{O}_{\mathbb{P}^{n-1}}(1))$-bundle over a smooth curve $C$;
\item $(X,L) \sim_{\mathrm{bir}} (Q,\mathcal{O}_{\mathbb{P}^{n+1}}(1))$, where $Q\subset \mathbb{Q}^{n+1}$ is a hyperquadric;
\item $(X,L)\sim_{\mathrm{bir}} (\mathbb{P}^2,\mathcal{O}_{\mathbb{P}^2}(2))$;
\item $(X,L)\sim_{\mathrm{bir}} C_n(\mathbb{P}^2,\mathcal{O}_{\mathbb{P}^2}(2))$, where $C_n(\mathbb{P}^2,\mathcal{O}_{\mathbb{P}^2}(2))$ is a generalized cone over $(\mathbb{P}^2,\mathcal{O}_{\mathbb{P}^2}(2))$
\end{enumerate}
\end{theorem}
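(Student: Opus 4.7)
The plan is to reduce the problem to the polarized case via an $L$-trivial MMP, and then to analyze the nefvalue contraction of the resulting polarized variety using Andreatta's description of general fibers of extremal contractions.

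\textbf{Step 1 (MMP reduction).} Since $X$ is $\mathbb{Q}$-factorial with canonical singularities and $L$ is nef and big, I would invoke \cref{r} to run an MMP that contracts precisely the $L$-trivial $K_X$-negative extremal rays. This produces a birational map $\pi\colon X \dashrightarrow X'$ onto a $\mathbb{Q}$-factorial canonical variety $X'$ on which the pushforward $L' := \pi_* L$ is an ample line bundle. At each step, passage to a common resolution together with the projection formula shows that if $K+(n-1)L$ were pseudo-effective on the output of an MMP step then it would also be pseudo-effective on the input; by contrapositive, $K_{X'} + (n-1)L' \notin \mathrm{Pseff}(X')$. Since $(X,L) \sim_{\mathrm{bir}} (X',L')$, it suffices to prove that $(X', L')$ is isomorphic to one of the five listed models.

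\textbf{Step 2 (Nefvalue contraction).} The hypothesis $K_{X'}+(n-1)L' \notin \mathrm{Pseff}(X')$ forces the nefvalue $\tau := \tau(X',L')$ to satisfy $\tau > n-1$. The associated nefvalue morphism is an elementary Fano–Mori contraction $\phi\colon X' \to Y$ on an extremal ray $R$ with $(K_{X'}+\tau L')\cdot R = 0$. A general fiber $F$ of $\phi$ is a polarized Fano pair $(F, L'|_F)$ with $-K_F \equiv \tau (L'|_F)$, hence of index at least $n-1$ with respect to the ample divisor $L'|_F$.

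\textbf{Step 3 (Case analysis).} Andreatta's fiber dimension bound \cref{b} for Fano–Mori contractions of high length forces $\dim Y \leq 1$. If $\dim Y = 1$, each general fiber has dimension $n-1$ and index $\geq n-1$, so by \cref{KOG} it is $(\mathbb{P}^{n-1}, \mathcal{O}(1))$; a standard argument then shows $Y$ is a smooth curve and $\phi$ is a projective bundle, yielding case (2). If $\dim Y = 0$, then $X' = F$ is itself Fano of index $\tau > n-1$: the values $\tau = n+1$ and $\tau = n$ produce cases (1) and (3) via \cref{KOG}, while the remaining range $n-1 < \tau < n$ forces $\Delta(X', L') = 0$ and is handled by Fujita's classification of polarized varieties with $\Delta$-genus zero in the normal setting, yielding the Veronese case (4) and the generalized cone case (5).

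\textbf{Main obstacle.} The principal difficulty lies in the endgame of Step 3 when $\dim Y = 0$ and $\tau$ is non-integral or lies strictly between $n-1$ and $n$: \cref{KOG} does not apply directly, and one must instead establish $\Delta(X', L') = 0$ and then invoke Fujita's $\Delta$-genus zero classification in the category of normal varieties, carefully tracking the canonical $\mathbb{Q}$-factorial singularities inherited from the MMP. A secondary but non-trivial point is to verify that the generalized cone $C_n(\mathbb{P}^2, \mathcal{O}_{\mathbb{P}^2}(2))$ is indeed $\mathbb{Q}$-factorial with canonical singularities, so that it genuinely appears as an output of the procedure rather than being only birational to something on the list.
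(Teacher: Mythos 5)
Your overall architecture --- contract the $L$-trivial extremal rays by an MMP, analyse the resulting extremal contraction with Andreatta's relative Kobayashi--Ochiai theorem (\cref{b}), and finish with \cref{KOG} plus Fujita's $\Delta$-genus-zero classification, excluding the non-canonical cones --- is exactly the paper's route, and your Step 3 case analysis matches \cref{cl1}, \cref{cl2} and \cref{cl3}.

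The one genuine problem is the assertion in Step 1 that the output $L'$ of the $L$-trivial MMP is an ample line bundle. Contracting the $L$-trivial $K_X$-negative extremal rays does not kill all $L$-trivial curves: an $L$-trivial curve need not span a $K_X$-negative extremal ray, and a nef and big $L$ need not be semiample, so in general no contraction making $L$ ample is available at this stage. The paper's \cref{r} is careful about precisely this point: its output is still only quasi-polarized, with the dichotomy (i) $\tau(L')$ finite, or (ii) a Mori fiber space structure $\phi$ with $L'$ merely $\phi$-ample and $K_{X'}+\tau L'\sim_{\mathbb{Q},\phi}0$. This matters for your Step 2, where you appeal to ``the nefvalue morphism'': in case (ii) the global nefvalue can be infinite and no such morphism exists, and even when it is finite the nefvalue morphism need not be elementary. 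The repair is what the paper actually does: in both cases one exhibits a single $K_{X'}$-negative extremal ray $R$ with $L'\cdot R>0$ and runs the fiber analysis of \cref{b} on the contraction of that ray (\cref{cl1}); ampleness of the polarization is only recovered afterwards, in the $\Delta$-genus-zero branch, by applying the basepoint-free theorem to $L'$ using $K_{X'}+\tau L'\sim_{\mathbb{Q}}0$ (\cref{cl2}). With that correction the remainder of your outline, including the singularity check on $C_n(\mathbb{P}^2,\mathcal{O}_{\mathbb{P}^2}(2))$ and the exclusion of $C_n(\mathbb{P}^1,\mathcal{O}_{\mathbb{P}^1}(a))$, goes through as in the paper.
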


This generalizes the results of Beltrametti and Sommese \citep[Proposition~7.2.2 and Theorem~7.2.4]{BS95}. The drawback of letting $L$ be nef and big is that after running MMP we don't have isomorphism and even have indeterminacies.

For a normal variety $X$, we have modifications $\mu:X'\rightarrow X$ for $X$ such that $X'$ has mild singularities and $K_{X'}$ is $\mu$-ample. A good reference for these modifications is \citep[Chapter~1]{kol13}. For a polarized variety $(X,L)$, with $X$ normal, we may take a canonical modifications $\mu:X'\rightarrow X$ for $X$ and consider the quasi-polarized variety $(X',\mu^{\ast}L)$. By applying the previous result, we have the following classification.

\begin{theorem}
\label{Cl}
Let $(X,L)$ be a polarized normal variety of dimension $n$. Suppose that $K_X$ is $\mathbb{Q}$-Cartier and $K_X+(n-1)L\notin \mathrm{Pseff}(X)$. Then we have one of the following cases:
\begin{enumerate}
\item[(1)] $(X,L)\cong (\mathbb{P}^n,\mathcal{O}_{\mathbb{P}^n}(1))$;

\item[(2.i)] $(X,L)\cong (\mathbb{P}(\mathcal{V}),\mathcal{O}_{\mathbb{P}(\mathcal{V})}(1))$, where $\mathcal{E}$ is a rank $n$ ample vector bundle over a smooth curve $C$;

\item[(2.ii)] $(X,L)\cong C_n(\mathbb{P}^1,\mathcal{O}_{\mathbb{P}^1}(a))$ be a generalized cone with $a \geq 3$;

\item[(3)] $(X,L) \cong (Q,\mathcal{O}_{\mathbb{P}^{n+1}}(1))$, where $Q\subset \mathbb{Q}^{n+1}$ is a hyperquadric;

\item[(4)] $(X,L)\cong (\mathbb{P}^2,\mathcal{O}_{\mathbb{P}^2}(2))$;

\item[(5)] $(X,L)\cong C_n(\mathbb{P}^2,\mathcal{O}_{\mathbb{P}^2}(2))$, a generalized cone over $(\mathbb{P}^2,\mathcal{O}_{\mathbb{P}^2}(2))$.
\end{enumerate}
\end{theorem}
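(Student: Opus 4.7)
The plan is to reduce to Theorem~\ref{Canonicalcase} via the canonical modification, and then recover the isomorphism type of $(X,L)$ by taking the ample model of the resulting birational class. Since $X$ is normal with $K_X$ being $\mathbb{Q}$-Cartier, the canonical modification $\mu\colon X'\to X$ exists, so $X'$ has canonical singularities, $K_{X'}$ is $\mu$-ample, and $K_{X'}=\mu^{\ast}K_X-\Delta$ for some effective $\mu$-exceptional $\mathbb{Q}$-divisor $\Delta$. Passing to a small $\mathbb{Q}$-factorialization $\nu\colon X''\to X'$ and setting $\pi:=\mu\circ\nu$, $M:=\pi^{\ast}L$, one obtains a $\mathbb{Q}$-factorial canonical variety $X''$ with $M$ nef and big.

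Next I would verify the hypotheses of Theorem~\ref{Canonicalcase} for $(X'',M)$. Since $\nu$ is small and crepant,
\[
K_{X''}+(n-1)M=\pi^{\ast}\bigl(K_X+(n-1)L\bigr)-\nu^{\ast}\Delta.
\]
The pullback $\pi^{\ast}(K_X+(n-1)L)$ is not pseudoeffective (since a pseudoeffective pullback would birationally push forward to a pseudoeffective class on $X$), and subtracting the effective $\nu^{\ast}\Delta$ keeps us outside $\mathrm{Pseff}(X'')$. Thus Theorem~\ref{Canonicalcase} identifies $(X'',M)$, up to birational equivalence of quasi-polarized varieties, with one of its five standard models.

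To transfer the conclusion back to $X$, I would use that $L$ is ample on the normal variety $X$, so $(X,L)\cong(\mathrm{Proj}\,R(X,L),\mathcal{O}(1))$ for the section ring $R(X,L)=\bigoplus_{m\geq0}H^0(X,mL)$. Since $\pi$ is birational and $X$ is normal, $\pi_{\ast}\mathcal{O}_{X''}=\mathcal{O}_X$ yields $H^0(X'',mM)=H^0(X,mL)$ for every $m\geq0$; as birational equivalence of quasi-polarized varieties preserves the section ring, $(X,L)$ is the ample model of whichever standard model of Theorem~\ref{Canonicalcase} appears. Cases (1), (3), (4) of Theorem~\ref{Canonicalcase} are already ample models (yielding the corresponding cases in the statement), and case (5) is a generalized cone whose ample model is itself.

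The hard part will be case (2), which must be split into the sub-cases (2.i) and (2.ii). Here the birational model is $(\mathbb{P}(\mathcal{E}),\mathcal{O}(1))$ for some rank-$n$ vector bundle $\mathcal{E}$ on a smooth curve $C$, and one must compute its ample model. Since $M$ is nef and big, $\mathcal{E}$ is nef; if moreover $\mathcal{E}$ is ample, then $\mathcal{O}_{\mathbb{P}(\mathcal{E})}(1)$ is ample and the ample model is $\mathbb{P}(\mathcal{E})$ itself, giving case (2.i). Otherwise I expect, by analyzing the extremal rays that produce the birational equivalence in Theorem~\ref{Canonicalcase}, that $\mathcal{E}$ splits (up to twist) as $\mathcal{O}^{\oplus(n-1)}\oplus\mathcal{O}(a)$ with $a\geq3$, so that contracting the $\mathcal{O}(1)$-trivial section yields the generalized cone $C_n(\mathbb{P}^1,\mathcal{O}(a))$ of case (2.ii); the inequality $a\geq3$ is precisely what prevents overlap with cases (1) and (3).
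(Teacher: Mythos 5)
Your overall strategy --- canonical modification followed by a small $\mathbb{Q}$-factorialization, reduction to the canonical $\mathbb{Q}$-factorial classification, and recovery of $(X,L)$ as $\mathrm{Proj}$ of the section ring --- matches the paper's, and your preliminary steps are sound: $\Delta\geq 0$ by the negativity lemma, non-pseudo-effectivity persists under pulling back and subtracting an effective divisor, and the section ring is a birational invariant of normal quasi-polarized varieties, so the ample model determines $(X,L)$. (The paper packages the reduction a bit differently: since $K_Y$ is $\mu$-nef, every $K_Y$-negative extremal ray is $M$-positive, so it invokes \cref{cl1} directly rather than \cref{Canonicalcase}, avoiding the MMP of \cref{r}; your route is nonetheless legitimate.)

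The genuine gap is precisely where you flag ``the hard part'': the sub-case analysis of the $\mathbb{P}^{n-1}$-bundle model. ``I expect that $\mathcal{E}$ splits as $\mathcal{O}^{\oplus(n-1)}\oplus\mathcal{O}(a)$'' is not an argument, and as stated the expectation is incomplete. When $\mathcal{O}_{\mathbb{P}(\mathcal{E})}(1)$ is nef and big but not ample, the map to the ample model is an extremal contraction that can be \emph{small}; in that branch $K_{\mathbb{P}(\mathcal{E})}=\mu^{\ast}K_X$, one computes $K_X+nL\num\mathcal{O}_X$, and \cref{KOG} shows $(X,L)$ is a hyperquadric --- i.e.\ this lands in case (3), not (2.ii), so your dichotomy ``ample $\Rightarrow$ (2.i), otherwise (2.ii)'' is false as written. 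In the divisorial branch one must still (a) identify the exceptional divisor as $\mathbb{P}(\mathcal{Q})$ for the numerically flat quotient $\mathcal{Q}$ of $\mathcal{E}$, (b) compute the discrepancy $\lambda=-m-n>-1$, conclude $X$ is klt so that non-trivial $\mu$-fibers are rationally chain connected, and deduce $C\cong\mathbb{P}^1$ --- your proposal never explains why the base curve in case (2.ii) must be $\mathbb{P}^1$ --- and (c) exclude $a\leq 2$ via the pseudo-effectivity computation of \cref{nonexample}. None of this appears in your proposal, and it constitutes the bulk of the paper's proof of this theorem.
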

In \cref{Cl}, we note that even if in the proof we have taken a modification, in the resulting list we have isomorphism. The reason is that $L$ is ample and birational equivalences between normal polarized varieties are always isomorphisms.

For a log canonical pair $(X,\Delta)$ with $(K_X+\Delta)+(\dim(X)-1)L\notin \mathrm{Pseff}(X)$, a first observation is that if $\Delta$ is $\mathbb{Q}$-Cartier, we will have $K_X+(\dim(X)-1)L\notin \mathrm{Pseff}(X)$. Hence we will have a list for $(X,L)$ similar to \cref{Cl}. However in this list the Picard number $\rho(X)\leq 2$. Hence for $\Delta$ to be an irreducible divisor or more generally reduced divisor, we don't have to many choice. We may thus give a list for $(X,\Delta, L)$.

\begin{prop}
\label{Clr}
Let $(X,\Delta)$ be a log canonical pair, with $\Delta\neq 0$ a reduced divisor. Suppose that $L$ is an ample line bundle on $X$ and $(K_X+\Delta)+(n-1)L\notin \mathrm{Pseff}(X)$, where $n=dim(X)$. Then $(X,\Delta,L)$ is one of the following:
\begin{enumerate}
\item $(X,L)\cong (\mathbb{P}^n,\mathcal{O}_{\mathbb{P}^n}(1))$, $\Delta\equiv_{\mathrm{num}} H$ is a prime divisor where $H$ is a hyperplane of $ \mathbb{P}^n$;

\item[(2.i)] There is a $(\mathbb{P}^{n-1},\mathcal{O}_{\mathbb{P}^{n-1}}(1))$-bundle $(\mathbb{P}(E),\mathcal{O}_{\mathbb{P}(E)}(1))$ over a smooth curve $C$, and a birational morphism $\mu:\mathbb{P}(E)\rightarrow X$ such that $\mu^{\ast}(L)\cong \mathcal{O}_{\mathbb{P}(E)}(1)$ and $\Delta=\sum F_i$ is a finite sum where $F_i\cong \mu(\mathbb{P}^{n-1})$ are images of distinct general fibers of $\pi$ by $\mu$;

\item[(2.ii)] $(X,L)=(\mathbb{P}(\mathcal{O}_{\mathbb{P}^1}(a)\oplus\mathcal{O}_{\mathbb{P}^1}(1)),\mathcal{O}_{\mathbb{P}(\mathcal{O}_{\mathbb{P}^1}(a)\oplus\mathcal{O}_{\mathbb{P}^1}(1))}(1))$ with $a>1$ and $\Delta=D$ is irreducible, where $D$ is the unique section of $\mathbb{P}(\mathcal{O}_{\mathbb{P}^1}(a)\oplus\mathcal{O}_{\mathbb{P}^1}(1))\rightarrow \mathbb{P}^1$ such that $D\equiv_{\mathrm{num}}\mathcal{O}_{\mathbb{P}(\mathcal{O}_{\mathbb{P}^1}(a)\oplus\mathcal{O}_{\mathbb{P}^1}(1))}(1))-af$, where $f$ is a general fiber;

\item[(3.i)] $(X,L)\cong (Q,\mathcal{O}_{\mathbb{P}^{n+1}}(1))$, where $Q\subset \mathbb{P}^{n+1}$ is a $\mathrm{rk}(Q)=3$ hyperquadric, the boundary divisor $\Delta$ is a hyperplane in $Q$ and  $[\Delta]=\dfrac{1}{2}[H\cap Q]$ where $H$ is a hyperplane in $\mathbb{P}^{n+1}$;

\item[(3.ii)] $(X,L)\cong (Q,\mathcal{O}_{\mathbb{P}^{n+1}}(1))$, where $Q\subset \mathbb{P}^{n+1}$ is a $\mathrm{rk}(Q)=4$ hyperquadirc. If we write $Q=\mathrm{Proj}\left( \dfrac{\mathbb{C}[x_0, \ldots, x_{n+1}]}{(x_0 x_1-x_2 x_3)}\right) $, then $\Delta=D$ is prime and $D$ is the cone with vertex $\mathbb{P}^{n-3}$ over $\mathbb{P}^{1}\times \mathrm{pt}$ or $\mathrm{pt}\times \mathbb{P}^1$. In particular, $D\cong \mathbb{P}^{n-1}$.
\end{enumerate}
\end{prop}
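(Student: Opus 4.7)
The plan is to reduce to Theorem~\ref{Cl} and then run through the five resulting cases to pin down the possible reduced boundaries $\Delta$. The reduction itself is the first observation of the paragraph above: once $\Delta$ is $\mathbb{Q}$-Cartier (which may be arranged, if necessary, by passing to a $\mathbb{Q}$-factorial dlt modification of $(X,\Delta)$ and descending the resulting classification back to $X$ via ampleness of $L$), pseudoeffectivity of $K_X+(n-1)L$ would force pseudoeffectivity of $(K_X+\Delta)+(n-1)L$. Hence $K_X+(n-1)L\notin\mathrm{Pseff}(X)$, and Theorem~\ref{Cl} puts $(X,L)$ into one of its five cases.

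First I would dispose of cases (4) and (5) of Theorem~\ref{Cl}: on each of $(\mathbb{P}^2,\mathcal{O}_{\mathbb{P}^2}(2))$ and $C_n(\mathbb{P}^2,\mathcal{O}_{\mathbb{P}^2}(2))$, the class group $\mathrm{Cl}(X)_{\mathbb{Q}}$ has rank one with ample generator $H$ satisfying $L\equiv 2H$, and a short calculation shows that $(K_X+\Delta)+(n-1)L$ is pseudoeffective for every nonzero effective $\Delta$, contradicting the hypothesis. The case $(X,L)\cong(\mathbb{P}^n,\mathcal{O}_{\mathbb{P}^n}(1))$ is handled by the same rank-one argument: writing $\Delta\equiv dH$ with $d\ge 1$ forces $d=1$, so $\Delta$ is a reduced divisor of degree one, hence a hyperplane, giving case (1) of the proposition.

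For the scroll cases (2.i)--(2.ii) of Theorem~\ref{Cl}, I work on $\pi:\mathbb{P}(\mathcal{V})\to C$ (or on its generalized-cone contraction) with numerical basis $\{H,F\}$, where $H=\mathcal{O}_{\mathbb{P}(\mathcal{V})}(1)$ and $F$ is a fiber, and use $K_{\mathbb{P}(\mathcal{V})}\equiv -nH+(2g-2+\deg\mathcal{V})F$. Writing $\Delta\equiv aH+bF$ and using the description of $\mathrm{Pseff}(\mathbb{P}(\mathcal{V}))$ coming from the Harder--Narasimhan filtration of $\mathcal{V}$, the non-pseudoeffectivity of $(K_X+\Delta)+(n-1)L$ combined with the reduced-effective condition on $\Delta$ leaves only two possibilities: either $a=0$, so $\Delta=\sum F_i$ is a finite sum of distinct fibers, giving case (2.i); or $a=1$, which forces $C=\mathbb{P}^1$, $\mathcal{V}=\mathcal{O}_{\mathbb{P}^1}(a)\oplus\mathcal{O}_{\mathbb{P}^1}(1)$ for some $a>1$, with $\Delta$ the distinguished minimal section, giving case (2.ii). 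In the quadric case $(X,L)\cong(Q,\mathcal{O}(1))$, I split according to $\mathrm{rk}(Q)$: when $\mathrm{rk}(Q)\ge 5$, $\mathrm{Cl}(Q)=\mathbb{Z}\cdot H_{|Q}$ and the rank-one argument eliminates the subcase; when $\mathrm{rk}(Q)=3$ the Weil class group is generated by a divisor $D$ with $2D\sim H_{|Q}$, yielding case (3.i); and when $\mathrm{rk}(Q)=4$, writing $Q=\{x_0 x_1-x_2 x_3=0\}$ exhibits two families of rulings whose cones over $\mathbb{P}^{n-3}$ produce the divisor $D\cong\mathbb{P}^{n-1}$ of case (3.ii).

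The main obstacle will be the singular-quadric subcases of rank $3$ and $4$: here $\Delta$ is only a Weil divisor, so one must work with $\mathrm{Cl}(Q)$ rather than $\mathrm{Pic}(Q)$, identify all reduced divisors compatible with the numerical condition, and verify that the resulting pair $(Q,\Delta)$ is log canonical along the singular locus of $Q$. A secondary technical point is the initial reduction: to deduce $K_X+(n-1)L\notin\mathrm{Pseff}(X)$ from $(K_X+\Delta)+(n-1)L\notin\mathrm{Pseff}(X)$ without assuming $\Delta$ is a priori $\mathbb{Q}$-Cartier, one needs to invoke a $\mathbb{Q}$-factorial dlt modification or an equivalent device, and check that passing through such a modification is compatible with the statement on $X$ itself.
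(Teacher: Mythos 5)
Your global strategy --- classify the underlying polarized variety first, then enumerate the possible reduced boundaries case by case via class-group and numerical computations --- is the same as the paper's, and your treatment of $\mathbb{P}^n$, of the quadrics of rank $3$, $4$ and $\geq 5$, and your exclusion of $(\mathbb{P}^2,\mathcal{O}_{\mathbb{P}^2}(2))$ and $C_n(\mathbb{P}^2,\mathcal{O}_{\mathbb{P}^2}(2))$ all match what is actually done. The gap is in the reduction itself. A log canonical pair only makes $K_X+\Delta$ $\mathbb{Q}$-Cartier; neither $K_X$ nor $\Delta$ need be, so the condition ``$K_X+(n-1)L\notin\mathrm{Pseff}(X)$'' is not even well posed on $X$, and \cref{Cl} (which assumes $K_X$ $\mathbb{Q}$-Cartier) cannot be invoked directly. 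Your proposed fix --- a $\mathbb{Q}$-factorial dlt modification of $(X,\Delta)$ --- does not supply what the argument needs: to run \cref{cl1} on the modification $Y$ one must know that every $K_Y$-negative extremal ray is positive against $M=\mu^{\ast}L$, and the paper obtains this from $K_Y$ \emph{alone} being $\mu$-nef, a property of the canonical modification of $(X,0)$ followed by a small $\mathbb{Q}$-factorialization, not of a dlt modification of the pair, which only controls $K_Y+\Delta^{\mathrm{dlt}}$. Moreover, ``descending the classification back to $X$'' is where most of the proof actually lives: one must split according to whether $\mu$ is an isomorphism, small, or divisorial, compare $\Delta$ with its strict transform $\Delta'=\mu^{-1}_{\ast}\Delta$ through the discrepancy $\lambda$ of the exceptional divisor, and rule out a horizontal boundary on the cone $C_n(\mathbb{P}^1,\mathcal{O}_{\mathbb{P}^1}(a))$ using \cref{example1} and \cref{nonexample}. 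None of this is routine and none of it appears in your outline.

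In the scroll case your sketch also hides the key dimension drop. When $\Delta$ has a component dominating the base (your ``$a=1$'' subcase), the conclusion that $C=\mathbb{P}^1$ and $\mathcal{V}=\mathcal{O}_{\mathbb{P}^1}(a)\oplus\mathcal{O}_{\mathbb{P}^1}(1)$ presupposes $n=2$; the paper devotes \cref{dimension-control} to proving exactly this, via a thrifty dlt modification and Kawamata's bound on lengths of extremal rays. Your Harder--Narasimhan idea can in fact replace that lemma when $\mu$ is an isomorphism: writing $\Delta\equiv aH+bF$ with $a\geq 1$, effectivity forces $b\geq -a\,\mu_{\min}(\mathcal{V})$, while non-pseudo-effectivity of $(a-1)H+(2g-2+\deg\mathcal{V}+b)F$ forces $2g-2+\deg\mathcal{V}-\mu_{\min}(\mathcal{V})<0$; since $\deg\mathcal{V}\geq n\,\mu_{\min}(\mathcal{V})$ and $\mu_{\min}(\mathcal{V})>0$ for $\mathcal{V}$ ample, this gives $g=0$ and then $n=2$. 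That is a genuinely more elementary route than the paper's and would be worth writing out in full --- but as it stands it is only asserted, and it does not cover the situation where the computation must be carried out for the strict transform $\Delta'$ on $\mathbb{P}(\mathcal{V})$ when $\mu$ contracts a divisor, where the class of $\Delta'$ differs from $\mu^{\ast}\Delta$ by a multiple of the exceptional divisor.
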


Finally we turn to non-normal varieties with semi-log canonical singularities. We have the following classification.

\begin{theorem}
\label{Slccase}
Let $X$ be a non-normal slc projective variety of dimension $n$ and $L$ an ample line bundle over $X$. Suppose that $K_X+(n-1)L\notin\mathrm{Pseff}(X)$. Let $\pi:\bar{X}\rightarrow X$ be the normalization of $X$ and $D\subset X$, $\bar{D}\subset \bar{X}$ the conductors. Then we have:

There is a nodal curve $C'$,a rank $n$-vector bundle $E'$, distinct fibers $F_1, F_2, \ldots, F_m$ of $\mathbb{P}(E')$ and a birational morphism $\mu:\mathbb{P}(E')\rightarrow X$ such that $\mu^{\ast}(L)=\mathcal{O}_{\mathbb{P}(E')}(1)$ and $D=\sum_{1\leq i\leq m}\mu(F_i)$

\end{theorem}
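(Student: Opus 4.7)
The plan is to reduce to the normal-pair classification of \cref{Clr} by normalization, and then assemble the components using the slc gluing data. Let $\pi:\bar X\to X$ be the normalization and decompose $\bar X=\bigsqcup_j \bar X_j$ into irreducible components; write $\bar D_j=\bar D\cap \bar X_j$ and $L_j=\pi^{\ast}L|_{\bar X_j}$. Because $(X,D)$ is slc, each $(\bar X_j,\bar D_j)$ is a log canonical pair with
$$K_{\bar X_j}+\bar D_j\sim_{\mathbb Q}(\pi^{\ast}K_X)|_{\bar X_j},$$
and finite pullback preserves non-pseudoeffectivity, so $K_{\bar X_j}+\bar D_j+(n-1)L_j\notin \mathrm{Pseff}(\bar X_j)$ on every component. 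Since $X$ is non-normal, each $\bar D_j$ is nonzero: otherwise $\bar X_j\to X$ would be an isomorphism onto a component of $X$, forcing $X$ to be normal. Thus \cref{Clr} applies to every triple $(\bar X_j,\bar D_j,L_j)$.

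Now I go through the five classes of \cref{Clr} and use the slc gluing datum—an involution $\tau$ on the normalization $(\bar D)^{\nu}$—to narrow every component to the fibered form (2.i). The quadric cases (3.i) and (3.ii) must be eliminated: in (3.i) the boundary $\bar D_j$ is irreducible with $2\bar D_j$ Cartier but $\bar D_j$ not Cartier, while in (3.ii) it is an irreducible divisor isomorphic to $\mathbb{P}^{n-1}$ embedded in a rank-$4$ quadric. In neither case can $\bar D_j$ be paired by $\tau$ with a boundary component of another admissible type on the list, as the Picard-group and self-intersection constraints are incompatible. Case (1) is brought into (2.i) by resolving the projection $\mathbb P^n\dashrightarrow \mathbb P^1$ from a codimension-$2$ linear subspace $\Lambda\subset H$: this exhibits a birational modification of $\mathbb P^n$ as a $\mathbb P^{n-1}$-bundle over $\mathbb P^1$ with the boundary hyperplane $H$ becoming a fiber and $\mathcal O(1)$ pulling back to the tautological line bundle. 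Case (2.ii), which occurs only for $n=2$, is treated by an analogous elementary transformation that turns the distinguished section into a fiber of a scroll structure.

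After these reductions, every $(\bar X_j,\bar D_j,L_j)$ is birational to a polarized $\mathbb P^{n-1}$-bundle $\mathbb P(E_j)\to C_j$ over a smooth curve with $\bar D_j$ a union of distinct fibers $F_{j,k}$ and $L_j=\mathcal O_{\mathbb P(E_j)}(1)$. The slc involution $\tau$ pairs these fibers: because each fiber is $\mathbb P^{n-1}$ and $\tau$ is compatible with $L$, each identification of $F_{j,k}$ with $F_{j',k'}$ is biregular and descends to an identification of the corresponding base points. Gluing the curves $C_j$ at these paired points produces a nodal curve $C'$, and the bundles $E_j$ glue to a rank-$n$ vector bundle $E'$ on $C'$. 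The resulting morphism $\mu:\mathbb P(E')\to X$ is then birational, satisfies $\mu^{\ast}L=\mathcal O_{\mathbb P(E')}(1)$, and $D=\sum_{i=1}^{m}\mu(F_i)$, where the $F_i$ are the fibers of $\mathbb P(E')$ over the nodes of $C'$.

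The main obstacle is the careful exclusion of the quadric cases (3.i) and (3.ii), which hinges on understanding which involutions of the boundary divisor are compatible with an slc gluing to another admissible component. A secondary difficulty is verifying that the fiber pairings descend to a well-defined rank-$n$ vector bundle on the nodal curve, i.e. that the local gluing cocycles on the conductor patch together into a bundle $E'$ rather than just a family of projective spaces.
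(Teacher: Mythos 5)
Your reduction to \cref{Clr} via the normalization is the same first step as the paper's (note, though, that $X$ is irreducible by the paper's conventions, so $\bar X$ has a single component and your decomposition $\bar X=\bigsqcup_j\bar X_j$ is vacuous — in particular the involution $\tau$ acts on $\bar D^\nu$ inside the one component, not by pairing boundary divisors of different components). The genuine gap is in how you dispose of cases (1), (2.ii), (3.i), (3.ii). The mechanism the paper uses, and which your argument is missing, is a parity constraint: the conductor map $\nu:\bar D^\nu\rightarrow D^\nu$ has degree $2$, so by the projection formula
\begin{equation*}
\bigl(\pi^{\ast}(L)|_{\bar D^\nu}\bigr)^{\,n-1}=2\cdot\bigl(L|_{D^\nu}\bigr)^{\,n-1}\in 2\mathbb{Z}.
\end{equation*}
In case (1) the conductor is a hyperplane with $(\pi^{\ast}L|_{\bar D})^{n-1}=1$; in (2.ii) the section $C$ has $\pi^{\ast}L\cdot C=1$; in (3.i) and (3.ii) the boundary is irreducible with top self-intersection of $\pi^{\ast}L$ equal to $1$. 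All four are therefore impossible, and in case (2.i) the number of fibers must be even, which is exactly what lets you pair them off into nodes. Your vague appeal to ``Picard-group and self-intersection constraints'' for (3.i)--(3.ii) gestures at this but never states the actual obstruction, so that step is not a proof.

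Worse, your treatment of cases (1) and (2.ii) is not merely incomplete but wrong in approach: you try to \emph{convert} them into case (2.i) by blowing up $\mathbb{P}^n$ along a codimension-$2$ linear space, resp.\ by an elementary transformation. But the theorem asserts a birational morphism $\mu:\mathbb{P}(E')\rightarrow X$ with $\mu^{\ast}L=\mathcal{O}_{\mathbb{P}(E')}(1)$ and $D=\sum\mu(F_i)$ with the $F_i$ fibers over the \emph{nodes} of $C'$; a self-gluing of $\mathbb{P}^n$ along one hyperplane (or of a surface scroll along one section) would produce a single self-identified fiber, not two branches over a node, and in any case such an $X$ does not exist by the parity obstruction above. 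The correct move is to exclude these cases outright, not to modify them. Your final gluing step (pairing fibers, descending to a nodal curve $C'$ and a bundle $E'$ with $q^{\ast}E'=E$) matches the paper's construction, and you correctly flag the descent of $E$ to $E'$ as a point needing care, but without the divisibility argument the proof does not go through.
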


We see that \cref{Slccase} shortens the list in \cref{Clr} rather than increasing it. In fact there is a degree $2$ morphism $\bar{D}^\nu\rightarrow D^\nu$, where $\bar{D}^\nu$ and $D^\nu$ are the normalizations of $\bar{D}$ and $D$ respectively. Hence we need $(L'|_{\bar{D}^\nu})^{n-1}$ to be divisible by $2$ which gives more restrictions on $(\bar{X},\bar{D})$ than the assumption in \cref{Clr}.

\begin{rem}
%After running MMP to reduce \cref{Canonicalcase} to the problem of classifying $(X',L')$ with $X'$ canonical and $L'$ ample (see \cref{r}), 
The classification in \cref{Canonicalcase} is already known for even when $X'$ is klt (\emph{cf.} \citep[Proposition~3.5]{And13}). My personal contribution in the classification is to use modifications to get \cref{Cl} and \cref{Slccase}.
\end{rem}

\subsection{Plan of the article}
The article is organized as following. 
In \cref{convA}, we recall some basic notions and facts that we need. In  \cref{cpv}, we prove \cref{Cl} by running an MMP (\cref{r}) to reduce the problem to check which member in the list of classification results of Fujita, Beltrametti-Sommese satisfies our non pseudo-effective hypothesis. In  \cref{npv}, we prove \cref{Cl} thanks to canonical modifications and use similar methods to prove \cref{Clr}. In  \cref{slcpv}, for a polarized slc variety $(X,L)$, we use \cref{Clr} on the triple $(\bar{X},\bar{D},L')$, where $(\bar{X},\bar{D})$ is the normalization of $X$ and the conductor divisor on $\bar{X}$ and $L'$ is the pullback of $L$, to get \cref{Slccase}.

\subsection*{Acknowledgement}
The article is part of the my PhD thesis. I would like to express my sincere gratitude to my advisor Andreas Höring for his support and guidance, and for proposing this project to me. I also thank Enrica Floris for reading my rather awkward draft and making many suggestions to make the article more readable. I also thank my co-advisor Benoît Claudon for pointing out some errors and typos. The essential part of this article was written when I was in Laboratoire J.A. Dieudonné. I thank the LJAD for its accommodation and its nice research environment.

\section{Notations and general setup}
\label{convA}
We work over $\mathbb{C}$. For general definitions we refer to \citep{har78}.

 A scheme in the article will always be projective over $\mathbb{C}$. A variety is a reduced and irreducible scheme over $\mathbb{C}$. The name \textit{point} does not necessarily refer to closed point.
 
 For two Cartier divisor $D_1$ and $D_2$, we denote by $D_1\sim D_2$ the linear equivalence and by $D_1\num D_2$ the numerical equivalence. We have similar notations for $\mathbb{Q}$-Cartier Weil-divisors.

A vector bundle $\mathcal{V}$ of rank $r$ over $X$ is a locally free sheaf of rank $r$. We set \begin{equation*}
\mathbb{P}(\mathcal{V}):=\mathrm{Proj}(\oplus_{n\geq 0}\mathrm{Sym}^n(\mathcal{V}))
\end{equation*}
to be its \textit{projectivisation}.

  We follow the positivity notions of divisors and vector bundles in \citep{Laz03}\citep{Laz04}. We will use a generalized version of \textit{pseudo-effectivenss} for reflexive sheaves by Höring-Peternell:
  \begin{definition}[\protect{\citep[Definition~2.1.]{HP18}}]
\label{pseff}
Let $X$ be a normal projective variety and $\mathcal{E}$ a reflexive sheaf on $X$. We say that $\mathcal{E}$ is \textit{pseudo-effective} if there exists an ample divisor $H$ on $X$ satisfying the following:
For any $c>0$ there exists integers $j>0$ and $i>jc$ such that 
\begin{equation*}
H^0(X,S^{[i]}(\mathcal{E})\otimes \mathcal{O}_X(jH))\neq 0
\end{equation*}
where $S^{[i]}(\mathcal{E})$ is the double dual of $\mathrm{Sym}^i(\mathcal{E})$.
\end{definition}
When $\mathcal{E}$ itself is a line bundle, we have that  $\mathcal{E}$ is pseudo-effective in the above sense is equivalent to $\mathcal{E}$ is pseudo-effective in the usual sense of \citep[Definition~2.2.25]{Laz03}.

We recall the definition of polarized and quasi-polarized varieties.
\begin{definition}
Let $(X,L)$ be a pair consisting of a projective variety $X$ and a line bundle $L$ over $X$. We call it 
\begin{enumerate}
\item a \textit{quasi-polarized variety} if $L$ is nef and big;
\item a \textit{polarized variety} if $L$ is ample.
\end{enumerate}
For a quasi-polarised variety $(X,L)$ of dimension $n$, its \textit{$\mathrm{\Delta}$-genus} is defined to be
\begin{equation*}
\label{deltagenus}
\mathrm{\Delta}(X,L):=n+L^n-h^0(X,L)
\end{equation*}
\end{definition}

Let $(X,L)$ be a quasi-polarized variety, we define the \textit{nefvalue} $\tau(L)$ of $L$ to be
\begin{equation*}
\tau(L):=\inf\{t \in \mathbb{R} :K_X+tL \text{ is nef}\}.
\end{equation*}
By Kawamata's rationality theorem, we know that $\tau(L)$ is a rational number or $\infty$.

We now give our notions for birational equivalence and isomorphisms between quasi-polarized varieties.
\begin{definition}
\label{bir}
Let $(X_1,L_1)$ and $(X_2,L_2)$ be two pairs consisting of a variety $X_i$ and a line bundle $L_i$ on $X_i$. We say that
\begin{enumerate}
\item $(X_1,L_1)$ is \textit{isomorphic} to $(X_2,L_2)$, if there exists an isomorphism $\phi:X_1\rightarrow X_2$ such that $\phi^{\ast}(L_2)$ is isomorphic to $L_1$. We denote this by $(X_1,L_1)\cong (X_2,L_2)$.
\item $(X_1,L_1)$ and $(X_2,L_2)$ are \textit{birationally equivalent}, if there exists a variety $X$ and two birational morphism $\phi_i:X\rightarrow X_i$ such that $\phi_1^{\ast}(L_1)$ is isomorphic to $\phi_2^{\ast}(L_2)$. We denote this by $(X_1,L_1)\sim_{\mathrm{bir}} (X_2,L_2)$.
\end{enumerate}
\end{definition}

In the article we will repeatedly encounter \textit{generalized cones}. We thus recall the notion of generalized cone here.
\begin{definition}[Generalized cone]
\label{gene-cone}
We follow the construction in \citep[1.1.8.]{BS95}
Let $V$ be a projective scheme of dimension $n$ and $L$ a very ample line bundle over $V$. Fix $N\geq n$ an integer. Set $E:=\oplus^{N-n}\mathcal{O}_V$ and $p:\mathbb{P}(E\oplus L)\rightarrow V$. We denote $\mathbb{P}(E\oplus L)$ by $X$.  Note that $E\oplus L$ is globally generated and we have for the tautological bundle $\xi:=\mathcal{O}_{\mathbb{P}(E\oplus L)}(1)$ of $\mathbb{P}(E\oplus L)$ a surjective morphism $p^{\ast}(E\oplus L)\rightarrow \xi$. Hence we have a surjective morphism
\begin{center}
$H^0(V,E\oplus L)\otimes_{\mathbb{C}}\mathcal{O}_X\twoheadrightarrow\xi$.
\end{center}
The above morphism corresponds to a unique morphism 
\begin{center}
$\phi_{\vert\xi\vert}:X\rightarrow \mathbb{P}(H^0(V,E\oplus L))$.
\end{center}
We take the Stein factorization of $\phi$:
\begin{center}
$\xymatrix{X\ar[r] \ar[d]_{\psi_{|\xi|}} & \mathbb{P}(H^0(V,E\oplus L))\\
 C_N(V,L)\ar[ru]}$
\end{center}
and call $C_N(V,L)$ the \textit{generalized cone of dimension N} on $(V,L)$. As $\xi$ is big, the scheme $C_N(V,L)$ has dimension $N$. Set $\xi_L:=\mathcal{O}_{\mathbb{P}(H^0(V,E\oplus L))}(1)|_{C_N(V,L)}$, then $\xi_L$ is ample.
\end{definition}

For the notions and results in birational geometry and the minimal model program, we refer to the standard \citep{KM98} and \citep{kol13}.

\section{Canonical polarized varieties}
\label{cpv}
In this section, we consider quasi-polarized varieties $(X,L)$ with canonical singularities. First we give a lemma to show how the condition $K_X+(n-1)L\notin \mathrm{Pseff}(X)$ is related to the nefvalue of $L$.

\begin{lemma}
\label{a}
Let $(X,L)$ be quasi-polarized variety of dimension $n$ with canonical singularities. Suppose that $\tau(L)$ is finite. If $K_X+(n-1)L\notin \mathrm{Pseff}(X)$, we have that $\tau(L)>n-1$.
\end{lemma}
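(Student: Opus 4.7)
The plan is to prove the contrapositive by a short direct argument: if $\tau(L) \leq n-1$ then $K_X+(n-1)L$ lies in $\mathrm{Pseff}(X)$. Since the hypothesis that $(X,L)$ is quasi-polarized with canonical singularities guarantees that $X$ is normal and $K_X$ is $\mathbb{Q}$-Cartier, the class $K_X+tL$ is a well-defined $\mathbb{Q}$-Cartier class for every $t \in \mathbb{Q}$, and $\tau(L)$ is a well-defined element of $[-\infty,+\infty]$.

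First I would observe that, because being nef is a closed condition on $N^1(X)_{\mathbb{R}}$, the set
\[
T := \{t \in \mathbb{R} \mid K_X+tL \text{ is nef}\}
\]
is closed in $\mathbb{R}$. Moreover it is stable under addition of non-negative reals, since $L$ is nef (as $(X,L)$ is quasi-polarized) and the sum of two nef classes is nef. Hence $T$ is of the form $[\tau(L),+\infty)$ as soon as $\tau(L)$ is finite, and in particular the infimum is attained: $K_X+\tau(L)L$ is nef.

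Now assume for contradiction that $\tau(L)\leq n-1$. Writing
\[
K_X+(n-1)L \;=\; \bigl(K_X+\tau(L)L\bigr) \;+\; \bigl(n-1-\tau(L)\bigr)L,
\]
the first summand is nef by the previous paragraph, and the second is a non-negative multiple of the nef class $L$; their sum is therefore nef. Since any nef $\mathbb{Q}$-Cartier class on a projective variety is pseudo-effective (the nef cone is contained in the pseudo-effective cone), we deduce $K_X+(n-1)L \in \mathrm{Pseff}(X)$, contradicting the hypothesis. Hence $\tau(L)>n-1$.

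There is really no hard step here: the only point that needs care is the attainment of the infimum in the definition of $\tau(L)$, and this is immediate from the closedness of the nef cone once we know $\tau(L)$ is finite (which is assumed). The canonical-singularities assumption is used only to make sense of $K_X$ as a $\mathbb{Q}$-Cartier divisor, and the quasi-polarized assumption is used only through the nefness of $L$.
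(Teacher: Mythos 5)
Your proof is correct and follows essentially the same route as the paper: both rest on the decomposition $K_X+(n-1)L=\bigl(K_X+\tau(L)L\bigr)+\bigl(n-1-\tau(L)\bigr)L$ together with the observation that the infimum defining $\tau(L)$ is attained, so that $K_X+\tau(L)L$ is nef. The only difference is that the paper first perturbs by an ample class $A$ (using that the complement of $\mathrm{Pseff}(X)$ is open) in order to contradict ampleness, whereas you conclude directly from the inclusion of the nef cone in the pseudo-effective cone; your version is, if anything, slightly more economical.
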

\begin{proof}
We know that $\mathrm{Pseff}(X')=\overline{\mathrm{Big}(X)}$ is a closed cone. Hence there exists an ample $\mathbb{Q}$-divisor $A$, such that $K_{X}+(n-1)L+A$ is not pseudo-effective. If $\tau(L)\leq n-1$, we have 
\begin{equation*}
K_{X}+(n-1)L+A=(K_{X}+\tau(L)L)+(n-1-\tau(L))L+A.
\end{equation*}
That is, $K_X+(n-1)L+A$ is a sum of a nef and an ample divisor, which is ample, a contradiction.
\end{proof}

When $L$ is ample, its nefvalue $\tau(L)$ is finite. However, when $L$ is just nef and big, we have some subtleties. By the cone theorem (\emph{cf.} \citep[Theorem~1.1.]{F11}), we know that
\begin{equation*}
\NE(X)=\NE(X)_{K_X\geq 0}+\sum \mathbb{R}_{\geq 0}[C_j]
\end{equation*}
where $C_j$ are $K_X$-negative rational curves and the sum is over countably many $j$.

\begin{enumerate}
\item For every $K_X$-negative extremal ray $R=\mathbb{R}_{\geq 0}[C]$, we have that $L \cdot C >0$. By the rationality theorem (\emph{cf}. \citep[Complement~3.6]{KM98}), there exists a $K_X$-negative extremal $C_0$ such that $\tau(L)=-\dfrac{K_X\cdot C_0}{L\cdot C_0}<\infty$. Hence $\tau(L)=\infty$ only if there exists an $L$-trivial $K_X$-negative extremal ray.

\item There exists a $K_X$-negative extremal ray $R$ such that $L\cdot R=0$. By the contraction theorem (\emph{cf.} \citep[Theorem~1.1.(4)]{F11}), we consider the contraction with respect to $R$, $\mathrm{cont}_R:X\rightarrow Z$. Note that there exists a line bundle $L_Z$ on $Z$ such that $L\cong \mathrm{cont}_R^{\ast}(L_Z)$.
\end{enumerate}

Hence we may consider to run a $K_X$-MMP to contract every $L$-trivial extremal rays to get a $(X',L')$ satisfying the case $(1)$. We now precise how to do this.

\begin{lemma}
\label{r}
Let $X$ be a variety with canonical $\mathbb{Q}$-factorial singularities and $L$ a big and nef line bundle on $X$. Suppose that $K_X+(n-1)L\notin \mathrm{Pseff}(X)$. Then $(X,L)$ is birationally equivalent to a quasi-polarized variety $(X',L')$, where $X'$ is a normal projective variety with canonical $\mathbb{Q}$-factorial singularities, $K_{X'}+(n-1)L'\notin \mathrm{Pseff}(X')$ and 
\begin{enumerate}
\item Either $\tau(L')$ is finite;
\item or there is a Mori fiber space structure $\phi:X'\rightarrow W$ and a rational number $\tau>(n-1)$ such that $L'$ is $\phi$-ample and $K_{X'}+\tau L'\sim_{\mathbb{Q},\phi}0$.
\end{enumerate}
\end{lemma}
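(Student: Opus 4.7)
The plan is to run an MMP in which we contract $L$-trivial $K_X$-negative extremal rays of $X$ (and its successive models), iterating until a stopping condition is reached. By the discussion preceding the lemma, the existence of such a ray is equivalent to $\tau(L)=\infty$, so if no such ray exists at the outset, we are already in case (1) with $(X',L')=(X,L)$. Otherwise we pick such a ray $R$ and consider its contraction $\mathrm{cont}_R\colon X\to Y$.

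The pivotal observation is that every such contraction is birational. Indeed, if $\mathrm{cont}_R$ were of fiber type with general fiber $F$ of positive dimension, the equality $L\cdot R=0$ would force $L|_F\equiv 0$, whence $L^n=0$ by the projection formula, contradicting the bigness of $L$. Hence each step is either divisorial or a flipping contraction. In the divisorial case, $L$ descends via the contraction theorem to a nef and big line bundle $L_1$ on $Y$, and $Y$ retains canonical $\mathbb{Q}$-factorial singularities by standard MMP preservation. In the flipping case, the flip $X\dashrightarrow X^+$ exists (canonical $\mathbb{Q}$-factorial pairs are klt, so BCHM applies), and the strict transform $L^+$ is again nef and big. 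The hypothesis $K+(n-1)L\notin\mathrm{Pseff}$ survives both types of step: for a divisorial contraction, writing $K_X=\mathrm{cont}_R^{\ast}K_Y+aE$ with $a\geq 0$ (from canonical singularities) shows that pseudo-effectiveness of $K_Y+(n-1)L_1$ would pull back to pseudo-effectiveness of $K_X+(n-1)L$, a contradiction; for flips, pseudo-effectiveness is preserved under the isomorphism in codimension one.

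The main technical obstacle is termination, since flips do not reduce the Picard number. I would address this by organizing the procedure as an MMP with scaling of a general ample divisor, for which termination is standard in the klt setting. At the terminal model $(X',L')$ one of two outcomes occurs. If no $L'$-trivial $K_{X'}$-negative extremal ray remains, then every $K_{X'}$-negative ray is $L'$-positive, and by Kawamata's cone and rationality theorems $\tau(L')<\infty$, giving case (1). Otherwise the procedure exits via a Mori fiber space $\phi\colon X'\to W$; the contracted ray $R$ cannot be $L'$-trivial by the bigness argument, so $L'$ is $\phi$-ample, and the relation $K_{X'}\cdot R+\tau\, L'\cdot R=0$ yields a rational $\tau>0$ with $K_{X'}+\tau L'\sim_{\mathbb{Q},\phi}0$. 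The inequality $\tau>n-1$ then follows from our hypothesis: if $\tau\leq n-1$, then $(K_{X'}+(n-1)L')\cdot R=(n-1-\tau)(L'\cdot R)\geq 0$, and writing $K_{X'}+\tau L'\sim_{\mathbb{Q}}\phi^{\ast}M$ for some nef $M$ on $W$ gives $K_{X'}+(n-1)L'\sim_{\mathbb{Q}}\phi^{\ast}M+(n-1-\tau)L'$, a sum of pseudo-effective classes, contradicting our assumption. This is case (2).
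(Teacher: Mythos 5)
Your overall strategy -- contract $L$-trivial $K$-negative rays, observe that such contractions are birational by bigness of $L$, descend $L$ through each step, and check that non-pseudo-effectivity of $K+(n-1)L$ survives -- is the same as the paper's, and those individual verifications are essentially correct. But there are two genuine gaps. The first is termination. The procedure you describe selects, at each step, an $L$-trivial $K$-negative extremal ray; an MMP with scaling of a general ample divisor does \emph{not} select such rays (the ray it picks is the one computing the nef threshold of $K+tH$, which has no reason to be $L$-trivial), so you cannot invoke that termination theorem for your sequence of steps. Termination of an arbitrary sequence of klt flips is not known, so "contract $L$-trivial rays until none remain" has no a priori stopping guarantee. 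The paper resolves this by introducing an auxiliary klt boundary $\Delta\sim_{\mathbb{Q}}(n-1)L$ (after a terminal modification, using \citep[Lemma~4.1]{And13}), running the $(K_Y+\Delta)$-MMP with scaling -- which terminates with a Mori fiber space by \citep[Corollary~1.3.3]{BCHM} precisely because $K_Y+\Delta\notin\mathrm{Pseff}(Y)$ -- and then citing \citep[Proposition~4.2]{And13} for the nontrivial fact that the birational steps of \emph{that} MMP are automatically $\Delta$-trivial, so that $L$ descends as a nef and big line bundle. In other words, the paper lets BCHM choose the rays and then proves they are $L$-trivial; you choose $L$-trivial rays and then assert the resulting sequence is a BCHM MMP, which does not follow.

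The second gap is structural: since you prove that every $L$-trivial contraction is birational, your procedure can never "exit via a Mori fiber space," so as written it would always land in case (1) -- and conversely case (2) of the lemma never arises from it. In the paper, the Mori fiber space of case (2) contracts an $L$-\emph{positive} ray (the final, fiber-type step of the $(K+\Delta)$-MMP), and case (2) occurs when an $L$-trivial $K$-negative ray still survives on that final model, so that $\tau(L')=\infty$ there. Relatedly, your derivation of $\tau>n-1$ writes $K_{X'}+\tau L'\sim_{\mathbb{Q}}\phi^{\ast}M$ with $M$ \emph{nef}, which is unjustified; the paper gets the inequality for free because the Mori fiber space ray is $(K+\Delta)$-negative by construction of the MMP, so $(K_{X'}+(n-1)L')\cdot R<0$ and $L'\cdot R>0$ force $\tau>n-1$. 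A smaller point: in the flipping case you should say why the strict transform of $L$ remains a nef and big \emph{line bundle} -- it does, because $L$ is trivial on the flipping ray, hence descends to the base of the flipping contraction and pulls back to the flip -- but this is easily repaired.
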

\begin{proof}
We apply the terminal modification then a small $\mathbb{Q}$-factorialization to $X$ (\emph{cf.}\citep[Theorem~1.33, Corollary~1.37]{kol13}). We get a modification $f:Y\rightarrow X$ such that $Y$ has $\mathbb{Q}$-factorial terminal singularities. Set $L_Y=f^{\ast}L$. We have that $L_Y$ is nef and big and $K_Y+(n-1)L_Y\notin \mathrm{Pseff}(Y)$. By \citep[Lemma~4.1.]{And13}, we can find an effective $\mathbb{Q}$-divisor $\Delta$ on $Y$ such that 
\begin{center}
$\Delta\sim_{\mathbb{Q}} (n-1)L_Y$ and $(Y,\Delta)$ is klt.
\end{center}
Now consider the pair $(Y,\Delta)$. We have that $K_Y+\Delta \notin \mathrm{Pseff}(Y)$. By \citep[Corollary~1.3.3]{BCHM}, we can run a $(K_Y+\Delta)$-MMP to get 
\begin{equation*}
\label{MMP}
(Y,\Delta)=(Y_0,\Delta_0)\dashrightarrow (Y_1,\Delta_1)\dashrightarrow\cdots\dashrightarrow (Y_s,\Delta_s),
\end{equation*}
with $Y_s$ a Mori fiber space.

Suppose that the map $\phi_i:Y_i\dashrightarrow Y_{i+1}$ is associated with a $(K_{Y_i}+\Delta_i)$-negative extremal ray $R_i$. By \citep[Proposition~4.2.]{And13}, for every $i=0, 1,\ldots, s$, we have that
\begin{enumerate}
\item $Y_i$ is $\mathbb{Q}$-factorial terminal;
\item $\Delta_i\cdot R_i=0$;
\item There exists nef and big line bundles $L_i$ on $Y_i$ and $\Delta_i\sim_{\mathbb{Q}} (n-1)L_i$.
\end{enumerate}
It is then obvious $K_{Y_i}+(n-1)L_i\notin \mathrm{Pseff}(Y_i)$.

We then set $(X',L'):=(Y_s, L_s)$. 
\begin{enumerate}
\item If $(Y_s,\Delta_s)$ has no $K_{Y_s}$-negative extremal ray $R$ such that $L_s\cdot R=0$, by Kawamata rationality theorem there exists a $K_{X'}$-negative extremal curve $C_0$  such that $\tau(L')=-\dfrac{K_{X'}\cdot C_0}{L'\cdot C_0}$. Hence the nefvalue of $L'$ is finite.

\item Otherwise, we consider the Mori fiber space $\phi_s:Y_s\rightarrow W$ obtained in the above $(K_Y+\Delta)$-MMP. Let $R_s:=\mathrm{NE}(\phi_s)$ be the extremal ray of $\phi_s$. We claim that $L_s\cdot R_s > 0$. Suppose by contradiction that $L_s\cdot R_s=0$. Then by the contraction theorem,  there exists $L_W$ such that $\phi_s^{\ast}(L_W)=L_s$.  As $\dim(W)<\dim(Y_s)$, we have that $L_s^n=\phi_s^{\ast}(L_W^n)=0$ contradicting $L_s$ to be nef and big. As $R_s$ is a $(K_{Y_s}+\Delta_s)$-negative extremal ray, we have that $(K_{Y_s}+(n-1)L_s)\cdot R_s<0$. Hence the $\tau>0$ such that $K_{Y_s}+\tau L_s\sim_{\mathbb{Q},\phi} 0$ satisfies that $\tau>(n-1)$.
\end{enumerate}
\end{proof}

The following relative Kobayashi-Ochiai criterion by Andreatta is the major tool to give further classification for the second situation in the above lemma.

\begin{theorem}[\protect{\cite[Theorem~2.1.]{And95}}]
\label{b}
Let $X$ be a projective variety with klt singularities and let $L$ be a line bundle on $X$. Let $\phi:X\rightarrow Z$ be a surjective morphism with connected fibers between normal varieties. Suppose that $L$ is $\phi$-ample and $K_X+\tau L\sim_{\mathbb{Q},\phi} 0$ for some $\tau\in \mathbb{Q}^{+}$. Let $F_1=\phi^{-1}(z)$ be a non-trivial fiber, $F\subset F_1$ be one of its irreducible components, $F'$ be the normalization of $F$ and let $L'$ be the pullback of $L$ on $F'$. Let $\lfloor \tau\rfloor$ be the integral part of $\tau$ and $ \tau'=\lceil \tau \rceil=-\lfloor -\tau\rfloor$.
\begin{enumerate}
\item[(I,1)]  $\mathrm{dim}(F)\geq \tau-1$;
\item[(I,2)] If $\mathrm{dim}(F)< \tau$, then $F\cong \mathbb{P}^{\tau'-1}$ and $L|_F=\mathcal{O}_{\mathbb{P}^{\tau'-1}}(1)$;
\item[(I,3)] If $\mathrm{dim}(F)<\tau+1$, then $\Delta(F',L')=0$,
\end{enumerate}
If moreover $\mathrm{dim}(F)>\mathrm{dim}(X)-\mathrm{dim}(Z)$, then
\begin{enumerate}
\item[(II,1)]  $\mathrm{dim}(F)\geq \tau$;
\item[(II,2)] If $\mathrm{dim}(F)= \tau$, then $F\cong \mathbb{P}^{\tau}$ and $L|_F=\mathcal{O}_{\mathbb{P}^{\tau}}(1)$;
\item[(II,3)] If $\mathrm{dim}(F)<\tau+1$, then $\Delta(F',L')=0$,
\end{enumerate}
 If all components of the fiber $F_1$ satisfy that $\mathrm{dim}(F)<\tau$, in case (I.2) or $\mathrm{dim}(F)\leq \tau$ in case (II.3), then the fiber is actually irreducible.
\end{theorem}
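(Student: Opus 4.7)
The plan is to reduce the relative theorem to an absolute statement on a single component of the fiber, then invoke the Kobayashi-Ochiai theorem (\cref{KOG}) together with Fujita's classification of polarized varieties with $\Delta$-genus zero. Fix an irreducible component $F \subset F_1$ with multiplicity $m$ in the scheme-theoretic fiber, let $\nu : F' \to F$ be its normalization, and set $L' := \nu^* L|_F$. After shrinking $Z$ around $z$, the fiber $F_1$ is $\mathbb{Q}$-linearly trivial; writing $F_1 = mF + R$ with $R$ supported on the other components, one gets $F|_{F'} \sim_{\mathbb{Q}} -\tfrac{1}{m}\nu^*(R|_F)$. Adjunction for the prime Weil divisor $F$ in the klt variety $X$ produces an effective ``different'' $\mathbb{Q}$-divisor $\Delta_{F'}$ with $K_{F'} + \Delta_{F'} \sim_{\mathbb{Q}} \nu^*((K_X+F)|_F)$, and combining with $(K_X + \tau L)|_{F_1} \sim_{\mathbb{Q}} 0$ yields
$$K_{F'} + \tau L' + \Delta_{F'} + \tfrac{1}{m}\nu^*(R|_F) \sim_{\mathbb{Q}} 0,$$
so $-K_{F'} - \tau L'$ is $\mathbb{Q}$-linearly equivalent to an effective divisor on the quasi-polarized variety $(F', L')$.

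From here I would derive (I.1)-(I.3) via a $\Delta$-genus calculation. Intersecting the displayed relation with $L'^{n-1}$, where $n = \dim F$, and combining with the non-negativity of the sectional genus $g(F', L') \geq 0$ produces a Fujita-type inequality forcing $n \geq \tau - 1$; this is (I.1). In the borderline regimes $n < \tau$ and $n < \tau + 1$, the inequality becomes an equality that forces the effective residual divisor $\Delta_{F'} + \tfrac{1}{m}\nu^*(R|_F)$ to be numerically trivial. The Kobayashi-Ochiai theorem (\cref{KOG}) applied to $(F', L')$ then pins it down as $(\mathbb{P}^{\tau'-1}, \mathcal{O}(1))$, and Fujita's classification of $\Delta$-genus zero polarized varieties gives $\Delta(F', L') = 0$ in the wider range. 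Since $\Delta_{F'}$ and the conductor of $\nu$ both vanish in the sharp case, $F$ is already normal there, yielding $F \cong F' \cong \mathbb{P}^{\tau'-1}$.

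For the improved bounds (II.*), the hypothesis $\dim F > \dim X - \dim Z$ means $F$ is strictly larger than a generic fiber and therefore arises as a degeneration of such. I would exploit this by specialization: base change to a general curve $T \subset Z$ through $z$ to get $\phi_T : X \times_Z T \to T$, and transport rational curves from nearby equidimensional fibers onto $F$ via a Mori bend-and-break analysis. The extra dimension translates into an extra unit of positivity in the $\Delta$-genus calculation, upgrading (I.1) to (II.1) by one. For the final irreducibility statement, each component of $F_1$ is, in case (I.2) (resp.~(II.2)), a projective space $\mathbb{P}^{\tau'-1}$ (resp.~$\mathbb{P}^\tau$) on which $L$ restricts to $\mathcal{O}(1)$; the $\phi$-ampleness of $L$ together with the vanishing intersection numbers of $F_1$ with fiber classes then forces $F_1$ to consist of a single component, since otherwise the strictly positive contributions of $L$ on the distinct components cannot sum to a $\phi$-trivial class.

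The main technical obstacle is making the adjunction formula rigorous when $F$ has multiplicity $m>1$ in $F_1$ and both $X$ and $F$ are singular: one must verify that $\Delta_{F'} + \tfrac{1}{m}\nu^*(R|_F)$ is an honest effective $\mathbb{Q}$-divisor whose coefficients are compatible with a pair-version of Kobayashi-Ochiai. Effectivity of $\Delta_{F'}$ is precisely what the klt hypothesis on $X$ ensures, but the coefficient bookkeeping, especially when $m > 1$ so that the residual divisor has small denominators, is where the argument is most delicate.
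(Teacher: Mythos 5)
First, note that the paper does not prove this statement at all: it is quoted verbatim as \citep[Theorem~2.1]{And95} and used as a black box, so there is no internal proof to compare against. Judged on its own terms, your proposal has a genuine gap at its very first step. You write $F_1=\phi^{-1}(z)$ as a $\mathbb{Q}$-linearly trivial divisor $mF+R$ after shrinking $Z$, and then run adjunction (with a different) for the ``prime Weil divisor'' $F\subset X$. But $F_1$ is the fiber over a \emph{point} $z\in Z$, and in general it has codimension $\dim X-\dim F_1$ in $X$, which is $\geq 2$ except when $\dim Z=1$. The theorem is stated for an arbitrary surjective morphism $\phi:X\rightarrow Z$ with connected fibers, and the places where the paper actually invokes it (e.g.\ the proof of \cref{cl1}, where one must rule out that the Mori contraction $\phi$ is birational via part (II,1)) are precisely situations in which $Z$ has dimension $\geq 2$ and the nontrivial fiber is a small- or high-codimension locus. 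For such fibers neither ``$F_1\sim_{\mathbb{Q}}0$ near $z$'' nor divisorial adjunction $K_{F'}+\Delta_{F'}\sim_{\mathbb{Q}}\nu^{\ast}((K_X+F)|_F)$ makes sense, so the displayed relation $K_{F'}+\tau L'+\Delta_{F'}+\tfrac{1}{m}\nu^{\ast}(R|_F)\sim_{\mathbb{Q}}0$, on which everything else in your argument rests, is not available.

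The remaining steps inherit this problem and add their own. The derivation of (I,1) from nonnegativity of the sectional genus only bounds $L'^{\,n}$, not $\dim F$, unless one already has the (unproved) absolute relation above; the passage from (I,1) to (II,1) by ``specialization and bend-and-break'' is a heuristic, not an argument; and the final irreducibility claim again intersects the class of $F_1$ with curves, which presupposes $F_1$ is a divisor. The actual proof in \citep{And95} avoids all of this: it works relatively over $Z$, using the relative base-point-free theorem to span $L$ (after twisting by a pullback from $Z$) near the fiber, cutting by general ``horizontal'' members of $|L|$ to drop the fiber dimension one unit at a time while preserving the klt condition and decreasing $\tau$ by one, and using relative Kawamata--Viehweg vanishing to control $h^0(F',L')$ at each step; Kobayashi--Ochiai and Fujita's $\Delta$-genus classification are then applied only at the end of the induction. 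That slicing mechanism is insensitive to the codimension of the fiber, which is exactly what your divisorial-adjunction reduction cannot handle. If you want to salvage your approach, you would have to restrict to $\dim Z=1$ (where $F_1$ really is a fiber divisor), but that does not cover the cases the theorem is needed for.
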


A direct result of the above Theorem is the following lemma which classifies the $(X',L')$ in the case $(2)$ of \cref{r}.

\begin{lemma}
\label{cl1}
Let $(X,L)$ be a quasi-polarized variety of dimension $n$. Suppose $X$ has canonical $\mathbb{Q}$-factorial singularities and $K_X+(n-1)L\notin \mathrm{Pseff}(X)$. Suppose that there exists a $K_X$-negative extremal ray $R=\mathbb{R}_{\geq0}[C_0]$ such that $L\cdot C_0>0$.
Then $(X,L)$ is the one of the following
\begin{enumerate}
\item $(X,L)\cong (\mathbb{P}^n,\mathcal{O}_{\mathbb{P}^n}(1))$, and $\tau=n+1$;
\item $(X,L)$ is isomorphic to a $(\mathbb{P}^{n-1},\mathcal{O}_{\mathbb{P}^{n-1}}(1))$-bundle over a smooth curve $C$ and $\tau=n$;
\item $\Delta(X,L)=0$, $K_X+\tau L\sim_{\mathbb{Q}} \mathcal{O}_X$ and $n-1<\tau\leq n$.
\end{enumerate}
\end{lemma}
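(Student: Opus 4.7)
My plan is to produce the nefvalue morphism of $L$ and invoke \cref{b} to control its fibers. The hypothesis of a $K_X$-negative extremal ray with positive $L$-intersection, together with Kawamata's rationality theorem, ensures that $\tau := \tau(L)$ is a finite rational number, and \cref{a} forces $\tau > n-1$. Since $K_X + \tau L$ is nef but not ample, the base-point-free and contraction theorems in the canonical $\mathbb{Q}$-factorial setting furnish a surjective morphism $\phi: X \to Z$ with connected fibers onto a normal projective variety $Z$, contracting exactly the $(K_X + \tau L)$-trivial face; in particular $L$ is $\phi$-ample and $K_X + \tau L \sim_{\mathbb{Q}, \phi} 0$, putting us in the setting of \cref{b}.

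Applying \cref{b} to $\phi$, every fiber $F$ has $\dim F \geq \tau - 1 > n - 2$, so $\dim F \in \{n-1, n\}$. I will split into these two subcases, corresponding to $Z$ being a point versus $Z$ being a curve.

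If $\dim F = n$, so $Z$ is a point and $K_X + \tau L \sim_{\mathbb{Q}} 0$, then part (II) of \cref{b} does not apply (its hypothesis $\dim F > \dim X - \dim Z$ fails) and we use (I). Part (I.1) gives $\tau \leq n+1$. When $\tau = n+1$, part (I.2) applies (since $\dim F = n < \tau$) and identifies $(X,L) \cong (\mathbb{P}^n, \mathcal{O}_{\mathbb{P}^n}(1))$, which is case (1). Otherwise $n-1 < \tau \leq n$ and $\dim F = n < \tau + 1$, so (I.3) yields $\Delta(X,L) = 0$, which is case (3). If $\dim F = n-1$, then $\dim Z = 1$; part (I.1) bounds $\tau \leq n$, and the strict inequality $\dim F = n-1 < \tau$ triggers (I.2), so the general fiber is $\mathbb{P}^{n-1}$ with $L|_F \cong \mathcal{O}(1)$. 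Restricting $K_X + \tau L \sim_{\mathbb{Q}, \phi} 0$ to a general fiber, which is Cartier with trivial normal bundle over a smooth point of $Z$, adjunction gives $-n + \tau = 0$, hence $\tau = n$. The final clause of \cref{b} then makes every scheme-theoretic fiber irreducible (every component has dimension $\leq n-1 < n = \tau$), so every fiber is a copy of $\mathbb{P}^{n-1}$ with $L$-restriction $\mathcal{O}(1)$. A standard cohomology-and-base-change argument identifies $\phi_* L$ with a rank $n$ locally free sheaf on $Z$ and produces an isomorphism $X \cong \mathbb{P}(\phi_* L)$ over $Z$, while $Z$ is a normal curve, hence smooth; this is case (2).

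The main obstacle I anticipate lies in the $\dim F = n-1$ branch: upgrading the generic-fiber information supplied by \cref{b} to a global projective bundle structure requires both the final clause of \cref{b} to ensure uniformity across all fibers and a careful base-change argument to produce the bundle, and one has to check that the canonical $\mathbb{Q}$-factorial hypothesis on $X$ is compatible with the adjunction step used to conclude $\tau = n$. The remaining cases follow more directly from the dichotomy in \cref{b}.
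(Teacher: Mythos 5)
Your overall strategy---produce a contraction and feed it into \cref{b}---is the paper's, but the specific morphism you contract along is different, and that difference opens two genuine gaps, both caused by $L$ being merely nef and big rather than ample. First, the finiteness of $\tau(L)$ is not guaranteed: the hypothesis provides \emph{one} $K_X$-negative extremal ray on which $L$ is positive, but does not exclude further $K_X$-negative rays on which $L$ is trivial, and on such a ray $K_X+tL$ is negative for every $t$, so $\tau(L)=\infty$ and there is no nefvalue morphism at all. Concretely, take $X=\mathbb{F}_1$ and $L$ the pullback of $\mathcal{O}_{\mathbb{P}^2}(1)$ under the blow-down: $K_X+L$ is not pseudo-effective and the ruling fiber $f$ is a $K_X$-negative ray with $L\cdot f=1$, yet $L$ is trivial on the $(-1)$-curve, so $\tau(L)=\infty$. (This is exactly the phenomenon the discussion preceding \cref{r} warns about, and why \cref{r} has a separate case (2).) Second, even when $\tau(L)$ is finite, your assertion that $L$ is $\phi$-ample for the nefvalue morphism does not follow: the $(K_X+\tau L)$-trivial face can contain $L$-trivial (hence $K_X$-trivial) curve classes. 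For $X=\mathbb{F}_2$ with $L$ the pullback of $\mathcal{O}(1)$ from the quadric cone one has $K_X+2L\sim 0$, so the nefvalue morphism is the constant map to a point, while $L$ vanishes on the $(-2)$-curve and is not ample; \cref{b} is then inapplicable to $\phi$.

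The paper avoids both problems by contracting only the given ray $R$ via its Mori contraction $\phi=\mathrm{cont}_R:X\to Z$. Then $\NE(X/Z)=R$, so the hypothesis $L\cdot C_0>0$ \emph{is} the statement that $L$ is $\phi$-ample, and one works with the number $t$ defined by $(K_X+tL)\cdot C_0=0$ (showing $t>n-1$ by restricting $K_X+(n-1)L$ to a general fiber) instead of the global nefvalue; the identification $\tau=t$ is only made a posteriori, once $Z$ is known to be a point or the bundle structure is established. If you replace your nefvalue morphism by $\mathrm{cont}_R$, the rest of your argument---the dimension dichotomy from (I,1), the use of (I,2)/(I,3), the irreducibility of all fibers via the final clause of \cref{b}, and the base-change argument for the bundle structure---goes through essentially as in the paper. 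One minor further point: in the $\dim F=n$ branch the correct split is $\tau>n$ versus $n-1<\tau\le n$; you cannot assume $\tau=n+1$ at the outset, although (I,2) then forces it.
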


\begin{proof}
Let $\phi:X\rightarrow Z$ be the Mori contraction of the extremal ray $R$. Set $t>0$ to be the rational number such that $(K_X+tL)\cdot C_0=0$. Let $F$ be a general fiber of $\phi$, then $(K_X+(n-1)L)|_F\notin \mathrm{Pseff}(F)$. As $\NE(F)=\mathbb{R}_{\geq0}[C_0]$, we have that $(K_X+(n-1)L)\cdot C_0<0$. Thus $t>(n-1)$.

Let $m$ be a divisible enough integer such that $m K_X$ is a Cartier divisor and $mt$ is an integer. The line bundle $mK_X+mt L$ is $\phi$-numerically trivial. By the contraction theorem, we know that $K_X+t L\sim_{\mathbb{Q},\phi}0$. As $\mathrm{NE}(X/Z)=R$, we have that $L$ is $\phi$-ample. Thus we are in the situation of \cref{b}.
 
 We first show that $\phi$ is not birational. Suppose by contradiction that $\phi:X\rightarrow Z$ is birational. Let $F$ be a component of a non trivial fiber $F_1=\phi^{-1}(z)$.  By \cref{b} (II,1), we have that $\mathrm{dim}(F)\geq t>n-1$. Thus $\phi(X)$ is a singleton, a contradiction.
 
  By \cref{b}, we know that $\mathrm{dim}(F)\geq t-1>n-2$. Thus we have that either $\mathrm{dim}(F)=n$ or $\mathrm{dim}(F)=n-1$.
 \begin{enumerate}
 \item If $\mathrm{dim}(F)=n$, we have that $F=X$ and $Z=\{z\}$. Then $K_X+tL\sim_{\mathbb{Q}} \mathcal{O}_X$ and $\tau=t$. If $t>n$, \cref{b} (I.2) implies that $(X,L)=(\mathbb{P}^n,\mathcal{O}_{\mathbb{P}^n}(1))$ and $\tau=n+1$. If $n-1<t\leq n$, we have that $\dim(F)=n<t+1$. By \cref{b} (I.3), we know that $\Delta(X,L)=0$.
 
 \item  $\mathrm{dim}(F)=n-1$. Let $F'\subset F_1$ be another component of $F_1$. Then \cref{b} implies $\dim(F')\geq n-1$. On the other hand we can not have $\dim(F')=n$, for this would imply that $F=F'=X$ which has dimension $n$, a contradiction. Hence by \cref{b} again, we know that $F_1$ is irreducible and $F=F_1$.  As $\phi$ is not birational, by semi-continuity of dimensions of fibers (\emph{cf.} for example  \cite[\href{https://stacks.math.columbia.edu/tag/02FZ}{Tag 02FZ}]{stacks-project}), for any point $z'$, the fiber $\phi^{-1}(z')$ has positive dimension. By \cref{b} and repeating the argument for $F$ and $F_1$, we know that $\phi^{-1}(z')$ is irreducible with dimension $n-1$. Then \cref{b} (I,2) implies that for every fiber $\phi^{-1}(z')$, we have that $(\phi^{-1}(z'),L_{\phi^{-1}(z')})\cong (\mathbb{P}^{n-1},\mathcal{O}_{\mathbb{P}^{n-1}}(1))$. Thus we know that $(X,L)$ is isomorphic to a $(\mathbb{P}^{n-1},\mathcal{O}_{\mathbb{P}^{n-1}}(1))$-bundle over a smooth curve $C$ and $\tau=n$.
 \end{enumerate} 
\end{proof}

We are now left in the case ($3$) of \cref{cl1}. In this case, we have the following:

\begin{lemma}
\label{cl2}
Let $(X,L)$ be a quasi-polarized variety of dimension $n$ with $\Delta(X,L)=0$. Suppose that $X$ has canonical $\mathbb{Q}$-factorial singularities, and that the nefvalue $\tau=\tau(L)$ of $L$ satisfies $n-1<\tau(L)\leq n$. If $K_X+\tau L\sim_{\mathbb{Q}} \mathcal{O}_X$, then there exists a birational morphism $\mu:X\rightarrow Y$ such that 
\begin{enumerate}
\item $Y$ has canonical singularities, $\mu^{\ast}(K_Y)=K_X$ ;
\item there exists an ample line bundle $A$ on $Y$ such that $\mu^{\ast}(A)=L$;
\item $\Delta(Y,A)=0$ and $K_Y+\tau A\equiv_{\mathrm{num}} \mathcal{O}_Y$.
\end{enumerate}
\end{lemma}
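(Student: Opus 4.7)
My plan is to obtain $\mu$ as the birational contraction attached to $L$ itself, produced by the base-point-free theorem. Since $X$ is klt and
\[
aL - K_X \sim_{\mathbb{Q}} (a+\tau)L
\]
is nef and big for every $a>0$, the Kawamata--Shokurov base-point-free theorem tells us that $L$ is semiample. I take $\mu\colon X\to Y$ to be the Stein factorisation of $\phi_{|mL|}$ for $m$ sufficiently large and divisible. Because $L$ is big, $\mu$ is birational onto a normal projective variety $Y$; because $L$ is Cartier and numerically trivial on every $\mu$-contracted curve, the contraction theorem promotes the relation $mL=\mu^{\ast}\mathcal{O}_Y(1)$ to an ample line bundle $A$ on $Y$ with $\mu^{\ast}A=L$, yielding~(2).

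For~(1) I would apply the negativity lemma. The birational projection formula $\mu_{\ast}\mu^{\ast}A=A$ combined with $K_X\sim_{\mathbb{Q}}-\tau L=-\tau\mu^{\ast}A$ gives
\[
K_Y=\mu_{\ast}K_X\sim_{\mathbb{Q}}-\tau A,
\]
so $K_Y$ is $\mathbb{Q}$-Cartier. Writing $K_X=\mu^{\ast}K_Y+E$ with $E$ a $\mu$-exceptional $\mathbb{Q}$-divisor, the identity $K_X\sim_{\mathbb{Q}}-\tau L\sim_{\mathbb{Q}}\mu^{\ast}K_Y$ forces $E\sim_{\mathbb{Q}}0$, and the negativity lemma then yields $E=0$. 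Hence $\mu$ is crepant; since $X$ has canonical singularities and the discrepancies above $Y$ coincide with those above $X$, $Y$ is canonical.

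Part~(3) then follows immediately. From $\mu^{\ast}(K_Y+\tau A)=K_X+\tau L\sim_{\mathbb{Q}}0$ and the birationality of $\mu$ we obtain $K_Y+\tau A\num \mathcal{O}_Y$. The projection formula gives $A^n=L^n$, and since $Y$ is normal with $\mu$ of connected fibres we have $\mu_{\ast}\mathcal{O}_X=\mathcal{O}_Y$, so $h^0(Y,A)=h^0(X,\mu^{\ast}A)=h^0(X,L)$; hence $\Delta(Y,A)=\Delta(X,L)=0$.

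The step I expect to be most delicate is the descent of $L$ itself, rather than only a power $mL$, to a genuine line bundle $A$ on $Y$: a priori the Stein factorisation of $\phi_{|mL|}$ only provides $mL=\mu^{\ast}\mathcal{O}_Y(1)$, and promoting this to $\mu^{\ast}A=L$ with $A$ Cartier requires invoking the contraction theorem on the $L$-trivial face, not merely Kodaira's lemma or the bigness of $L$.
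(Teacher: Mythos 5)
Your overall strategy is the same as the paper's: use the base-point-free theorem (via $L-K_X\sim_{\mathbb{Q}}(1+\tau)L$ nef and big) to produce a birational contraction $\mu\colon X\to Y$ onto $Y=\mathrm{Proj}\,R(X,L)$, descend $L$ to an ample $A$, and then deduce crepancy and the numerical identities. Parts (1) and (3) of your argument are fine; your use of the negativity lemma to get $E=0$ is a clean alternative to the paper's comparison of $\mathcal{O}_Y(mK_Y)$ with $-m\tau A$ in codimension one, and the rest of (3) is the same projection-formula computation.

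The one step that needs repair is exactly the one you flagged: descending $L$ itself. You invoke ``the contraction theorem on the $L$-trivial face,'' but the descent statement in the contraction theorem (e.g.\ \citep[Theorem~3.7(4)]{KM98}) applies to a $(K_X+\Delta)$-negative extremal face, whereas here $K_X\sim_{\mathbb{Q}}-\tau L$ makes the $L$-trivial face $K_X$-\emph{trivial}, so the theorem does not apply as stated. This is fixable in two ways. (a) Perturb: write $L\sim_{\mathbb{Q}}A'+E'$ with $A'$ ample and $E'$ effective; for $0<\epsilon\ll1$ the pair $(X,\epsilon E')$ is klt and $(K_X+\epsilon E')\cdot z=-\epsilon A'\cdot z<0$ for every nonzero $z$ in the $L$-trivial face, so that face is $(K_X+\epsilon E')$-negative and the contraction theorem then descends $L$. (b) Avoid the contraction theorem altogether, as the paper does: for two consecutive large multiples $b_0L=\mu^{\ast}A_1$ and $(b_0+1)L=\mu^{\ast}A_2$ one sets $A:=A_2-A_1$, and $\mu^{\ast}A=L$ with $b_0A=A_1$ ample, hence $A$ ample. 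Either completion makes your proof correct; as written, the appeal to the contraction theorem is not yet justified.
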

\begin{proof}
We have that $L-K_X\sim_{\mathbb{Q}} 2\tau L$ which is nef and big. Hence we may apply the basepoint-free theorem for $L$ (\cite[Theorem~ 3.3.]{KM98}), to get that for all sufficient large integer $b$, the linear system $|bL|$ has no basepoints. We fix a such integer $b_0$. Now consider the graded algebra 
\begin{equation*}
R(X,L)=:\bigoplus_{n\geq 0} H^0(X,nL).
\end{equation*}
We have a canonical rational map $\mu:X\rightarrow \mathrm{Proj}(R(X,L))=:Y$. As $\mathrm{Bs}(|b_0L|)=\emptyset$, we know that $\mu$ has no indeterminacy and $R(X,L)$ is finite generated (\emph{cf.} \citep[Proposition~7.6.]{deb}). Hence the ring $R(X,b_0L)$ is integral and normal. As $L$ is big, the morphism $\mu$ is birational and $b_0L=\mu^{\ast}(A_1)$(\emph{cf.} \citep[Lemma~7.10.]{deb}) for some ample Cartier divisor $A_1$. With the same argument for the integer $b_0+1$, we get another ample Cartier divisor $A_2$ such that $(b_0+1)L=\mu^{\ast}(A_2)$.  By setting $A:=A_2-A_1$, we get (2).

We now take a divisible enough $m$ such that $mK_X$ is Cartier, the number $m\tau$ is an integer and $mK_X+m\tau L\sim_{\mathbb{Z}} 0$. Denote by $E$ the exceptional locus of $\mu$  and by $\nu:Y\setminus \mu(E)\rightarrow X\setminus E$ the inverse of $\mu$. We have that
\begin{equation*}
\mathcal{O}_Y(mK_Y)|_{Y\setminus \mu(E)}\sim \nu^{\ast}(\mathcal{O}_X|_{X \setminus E})\sim \nu^{\ast}(-m\tau L|_{X\setminus E})\sim -m\tau A|_{Y\setminus \mu(E)}
\end{equation*}
 We have that the rank one reflexive sheaf $\mathcal{O}_Y(mK_Y)$ and the line bundle $-m\tau A$ agree outside a subset whose codimension is at least $2$. Hence $\mathcal{O}_Y(mK_Y)$ is a line bundle and $K_Y$ is $\mathbb{Q}$-Cartier. We thus have the equalities $K_Y=-\tau A$ and $\mu^{\ast}(K_Y)=K_X$. Hence $\mu$ is crepant and $Y$ has canonical singularities. We get ($1$). By projection formula, we have that
 \begin{center}
 $K_Y+\tau A=\mu_{\ast}(K_X+\tau L)=\mathcal{O}_Y$.
 \end{center}
 Thus $\Delta(Y,A)=n+A^n-h^0(Y,A)=n+L^n-h^0(X,L)=0$.
\end{proof}

Hence it rest for us to classify the polarized variety $(X,L)$ with $L$ ample, $n-1<\tau(L)\leq n$, $\Delta(X,L)=0$ and $K_X+\tau(L)L\cong \mathcal{O}_X$. We have the following

\begin{lemma}
\label{cl3}
Let $(X,L)$ be a polarized variety with $L$ ample, $n-1<\tau(L)\leq n$, $\Delta(X,L)=0$ and $K_X+\tau(L)L\equiv_{\mathrm{num}} \mathcal{O}_X$. Suppose that $X$ has canonical singularities. Then one of the following occurs:
\begin{enumerate}
\item $(X,L)\cong (Q,\mathcal{O}_{\mathbb{P}^{n+1}}(1))$, where $Q\subset \mathbb{Q}^{n+1}$ is a hyperquadric;
\item $(X,L)$ is a $\mathbb{P}^{n-1}$-bundle over $\mathbb{P}^1$ and the restriction of $L$ to each fiber is $\mathcal{O}_{P^{n-1}}(1)$;
\item $(X,L)\cong (\mathbb{P}^2,\mathcal{O}_{\mathbb{P}}^2(2))$;
\item $(X,L)\cong C_n(\mathbb{P}^2,\mathcal{O}_{\mathbb{P}^2}(2))$ is a generalized cone over $(\mathbb{P}^2,\mathcal{O}_{\mathbb{P}^2}(2))$
\end{enumerate}
\end{lemma}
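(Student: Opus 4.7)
The plan is to combine Fujita's classification of polarized varieties with vanishing $\Delta$-genus (see \citep[Chapter~I, \S 5]{fujita_1990}, or \citep[\S 3.1]{BS95}) with the numerical constraint $K_X + \tau L \equiv_{\mathrm{num}} 0$ together with $\tau \in (n-1, n]$. Fujita's theorem asserts that any polarized normal variety $(X,L)$ with $\Delta(X,L) = 0$ and $L$ ample embeds via $|L|$ as a variety of minimal degree, and is isomorphic to one of: $(\mathbb{P}^n, \mathcal{O}(1))$; a hyperquadric $(Q,\mathcal{O}(1))$; a scroll $(\mathbb{P}(E), \mathcal{O}_{\mathbb{P}(E)}(1))$ over $\mathbb{P}^1$ with $E$ an ample rank-$n$ vector bundle; the Veronese surface $(\mathbb{P}^2, \mathcal{O}(2))$; or a generalized cone over one of these.

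I would then sift this list through the nefvalue hypothesis. The projective space case is excluded since $(\mathbb{P}^n,\mathcal{O}(1))$ has $\tau = n+1$. The hyperquadric satisfies $K_Q = -nH|_Q$, giving $\tau = n \in (n-1, n]$ and conclusion $(1)$. For a scroll $\pi \colon \mathbb{P}(E) \to \mathbb{P}^1$ with $L = \xi$, restricting $K_X + \tau L$ to a fiber $\mathbb{P}^{n-1}$ forces $\tau = n$; the formula $K_{\mathbb{P}(E)} \equiv -n\xi + (\deg E - 2) f$, with $f$ the fiber class, then gives $\deg E = 2$, yielding conclusion $(2)$. The Veronese surface has $K + \frac{3}{2}L \equiv 0$ with $\tau = 3/2 \in (1, 2]$, producing conclusion $(3)$. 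For generalized cones: cones over projective spaces are again projective spaces (excluded), cones over hyperquadrics and over scrolls are absorbed into $(1)$ and $(2)$ by unfolding the Stein factorization of \cref{gene-cone}, and cones over $(\mathbb{P}^2, \mathcal{O}(2))$ give exactly conclusion $(4)$.

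The main obstacle is the canonical singularity hypothesis, automatic in the smooth cases but requiring verification for the two cone types appearing in $(1)$ (quadric cones) and $(4)$ ($C_n(\mathbb{P}^2, \mathcal{O}(2))$). For each, I would exhibit an explicit partial resolution of the vertex: blowing up the vertex yields a projective bundle over the base variety, and a standard discrepancy computation confirms the singularities are at worst canonical. A secondary bookkeeping subtlety is to ensure that the cone cases absorbed into conclusions $(1)$--$(2)$ really are isomorphic, as polarized varieties, to the forms claimed; this follows from the fact that a generalized cone $C_n(V,L_V)$ over a projectively normal $(V,L_V)$ of minimal degree inherits both the $\Delta$-genus and the nefvalue from $(V,L_V)$, so the constraints pin down the base uniquely.
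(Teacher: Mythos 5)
Your overall strategy---sift Fujita's $\Delta$-genus-zero list through the numerical hypotheses---is the same as the paper's, and your handling of $\mathbb{P}^n$, the hyperquadric, the scroll and the Veronese is fine. The genuine gap is in the cone analysis. You assert that cones over scrolls are ``absorbed into $(2)$ by unfolding the Stein factorization,'' but a generalized cone over a scroll is \emph{not} a projective bundle. The critical missed case is $C_n(\mathbb{P}^1,\mathcal{O}_{\mathbb{P}^1}(a))$ with $a\geq 3$ (the cone over a rational normal curve of degree $a$): it has $\Delta$-genus zero, $K_X$ is $\mathbb{Q}$-Cartier with $K_X\equiv -(n-\tfrac{a-2}{a})L$, so $K_X+\tau(L)L\equiv_{\mathrm{num}}0$ with $\tau(L)=n-\tfrac{a-2}{a}\in(n-1,n)$, i.e.\ it satisfies \emph{every} hypothesis of the lemma except canonical singularities, and it is not isomorphic to any of the four listed outputs. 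Your sieve therefore lets it through and the classification as you argue it is false.

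The fix is exactly where you relegate the singularity hypothesis to an afterthought: canonicity is not something to ``verify'' for the survivors, it is the exclusion mechanism for these cones. Blowing up the vertex of $C_n(\mathbb{P}^1,\mathcal{O}(a))$ exhibits a divisor of discrepancy $-1+\tfrac{2}{a}<0$ for $a\geq 3$, so these cones are klt but not canonical (this is \cref{example1} in the paper, and it is why they \emph{do} reappear as case (2.ii) of \cref{Cl}, where only klt is available). For cones over higher-dimensional bases the paper instead uses the numerical hypothesis: when $\tau<n$ the divisor $K_X+nL$ is ample, and a normal-bundle computation along the base $V\subset X$ shows $K_V+\dim(V)L_V$ must be ample, which rules out every base of minimal degree except the Veronese surface. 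You need one of these two arguments (discrepancy at the vertex, or positivity of $K_V+\dim(V)L_V$) to close the cone case; without it the proof does not go through.
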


\begin{proof}
If $\tau(L)=n$, we have that $K_X+nL\equiv_{\mathrm{num}} \mathcal{O}_X$. Then \cref{KOG} implies that $(X,L)\cong (Q,\mathcal{O}_{\mathbb{P}^{n+1}}(1))$, where $Q\subset \mathbb{P}^{n+1}$ is a hyperquadric. Hence we are in case ($1$). 

From now on we may assume that $\tau(L)<n$. As $L$ is ample, we have that
\begin{equation*}
K_X+nL\num(n-\tau(L))L
\end{equation*}
is ample.

By Fujita's classification theorem for polarized varieties with $\Delta$-genus zero (\emph{cf.}
\citep[Theorem~5.10 and Theorem~5.15]{fujita_1990} \citep[Proposition~3.1.2.]{BS95}), we know that besides the four cases given above in \cref{cl3}, there are two more possibilities for $(X,L)$:
\begin{enumerate}
\item[(i)] Either $(X,L)\cong (\mathbb{P}^n,\mathcal{O}_{\mathbb{P}^n}(1))$,
\item[(ii)] or $(X,L)$ is a generalized cone over $(V,L_V)$, where $V\subset X$ is a smooth submanifold, $L|_V=L_V$ is very ample and $\Delta(V,L_V)=0$.
\end{enumerate}
Case (i) is impossible, since $\tau(\mathcal{O}_{\mathbb{P}^n}(1))=n+1$. Hence we need to investigate case (ii). Set $r:=n-\mathrm{dim}(V)$. From \cref{gene-cone} we have the following diagram
\begin{center}
$\xymatrix{
&\mathbb{P}(\mathcal{O}_V^{\oplus r})=V\times\mathbb{P}^{r-1} \ar[r]^-{\mathrm{pr}_2}\ar@{^{(}->}[d]^i & \mathbb{P}^{r-1}  \ar@{^{(}->}[d]\\
\mathbb{P}(L_V)\ar[r]\ar[rd]_{\cong}& \mathbb{P}(\mathcal{O}_V^{\oplus r}\oplus L_V)\ar[d]^{\pi}\ar[r]^{\psi_{|\xi|}} &C_n(V,L_V)=X \\ 
& V
}$
\end{center}
where $\xi=\mathcal{O}_{\mathbb{P}(\mathcal{O}_V^{\oplus r}\oplus L_V)}(1)$ is the tautological bundle. The identification of $V\cong \mathbb{P}(L_V)$ is given by the quotient morphism $\mathcal{O}_V^{\oplus r}\oplus L_V\twoheadrightarrow L_V$.

We claim that outside $\mathbb{P}(\mathcal{O}_V^{\oplus r})$ the morphism $\psi_{|\xi|}$ induces an isomorphism onto its image. Take $z\in C_n(V,L)$ such that $\psi_{|\xi|}^{-1}(z)$ has positive dimension. In particular, there exists a curve $C_1$ such that $\psi_{|\xi|}(C_1)=\{z\}$. Since $\mathcal{O}_V^{\oplus r}\oplus L_V$ is globally generated, we know that $\psi_{|\xi|}$ restricted to each fiber of $\pi$ is an embedding. Hence $\pi$ maps $C_1$ bijectively to its image $C$. By generic smoothness, we have an open subset $U\subset C$ such that $\pi:C_0:=\pi^{-1}(U)\rightarrow U$ is an isomorphism. We may regard $C_0$ as a section of $\pi$ defined over $U$. That is 
\begin{center}
$\xymatrix{
\mathbb{P}((\mathcal{O}_V^{\oplus r}\oplus L_V)|_U)\ar[r]\ar[d] &\mathbb{P}(\mathcal{O}_V^{\oplus r}\oplus L_V) \ar[d]\\
U \ar@/^1pc/[u]^{\sigma} \ar[r] & V
}$
\end{center}
The section $\sigma$ is defined by a quotient $\rho=(\rho_1,\rho_2):(\mathcal{O}_V^{\oplus r}\oplus L_V)|_U\twoheadrightarrow M$, with $M$ a line bundle on $U$. The morphism $\rho$ has a decomposition into $\rho_1:\mathcal{O}_V^{\oplus r}\rightarrow M$ and $\rho_2:L_V\rightarrow M$  . As $\psi_{|\xi|}\circ \sigma (U)=\{z\}$, we know that $M\cong \sigma^{\ast}(\xi|_{\mathbb{P}((\mathcal{O}_V^{\oplus r}\oplus L_V)|_U)})$ is trivial. As $h^0(\mathrm{Hom}_{\mathcal{O}_U}(L_V|_U,\mathcal{O}_U))=h^0(U,L_V^{\vee}|_U)=0$, we have that $\rho_2=0$. Hence the quotient is given by $\rho_1:\mathcal{O}_U^{\oplus n-1}\rightarrow \mathcal{O}_U$. Hence  $C_0=U\subset \mathbb{P}(\mathcal{O}_V^{\oplus r})$ and $C=\overline{C_0}\subset \mathbb{P}(\mathcal{O}_V^{\oplus r})$.

As $V=\mathbb{P}(L_V)$ is smooth, we have the short exact sequence
\begin{center}
$0\rightarrow T_{\mathbb{P}(L_V)}\rightarrow T_{\mathbb{P}(\mathcal{O}_V^{\oplus r}\oplus L_V)}|_{\mathbb{P}(L_V)}\rightarrow N_{\mathbb{P}(L_V)/\mathbb{P}(\mathcal{O}_V^{\oplus r}\oplus L_V)}\rightarrow 0$.
\end{center}
We have thus 
\begin{equation}
\label{eq1}
\omega_{\mathbb{P}(\mathcal{O}_V^{\oplus r}\oplus L_V)}^{\vee}|_{\mathbb{P}(L_V)}=\omega_{\mathbb{P}(L_V)}^{\vee}\otimes \wedge^{r}N_{\mathbb{P}(L_V)/\mathbb{P}(\mathcal{O}_V^{\oplus r}\oplus L_V)}.
\end{equation}
The canonical bundle formula gives us
\begin{equation*}
\omega_{\mathbb{P}(\mathcal{O}_V^{\oplus r}\oplus L_V)}=\pi^{\ast}(\omega_V\otimes L_V)\otimes \xi^{\otimes-(r+1)}.
\end{equation*}
With $\xi|_V=L_V$, we know that $\omega_{\mathbb{P}(\mathcal{O}_V^{\oplus r}\oplus L_V)}|_V=\omega_V\otimes L_V^{\otimes -r}$. Thus \cref{eq1} gives 
\begin{equation*}
\wedge^{r}N_{\mathbb{P}(L_V)/\mathbb{P}(\mathcal{O}_V^{\oplus r}\oplus L_V)}=L_V^{\otimes r}.
\end{equation*} 
As $\mathbb{P}(L_V)$ is disjoint from the singular locus $\mathbb{P}^{r-1}\subset X$, we also have the exact sequence 
\begin{center}
$0\rightarrow T_{\mathbb{P}(L_V)}\rightarrow T_X|_{\mathbb{P}(L_V)}\rightarrow N_{\mathbb{P}(L_V)/X}\rightarrow 0$.
\end{center}
Hence 
\begin{equation*}
\omega_X^{\vee}|_{\mathbb{P}(L_V)}=\omega_{\mathbb{P}(L_V)}^{\vee}\otimes \wedge^{r}N_{\mathbb{P}(L_V)/X}.
\end{equation*}
Note $N_{\mathbb{P}(L_V)/X}=N_{\mathbb{P}(L_V)/\mathbb{P}(\mathcal{O}_V^{\oplus r}\oplus L_V)}$. Hence $\omega_X|_V=\omega_V\otimes L^{\otimes-r}$.
Then we have 
\begin{center}
$\omega_X\otimes L^{\otimes n}|_V=\omega_V\otimes L^{\otimes (n-r)}$.
\end{center}
Hence the divisor $K_V+\mathrm{dim(V)}L_V$ is ample. 

If $\mathrm{dim(V)}\geq 2$, apply \citep[Theorem~5.10]{fujita_1990} again for $(V,L_V)$. We know that $(V,L_V)$ is one of the following:
\begin{itemize}
\item $(\mathbb{P}^{\mathrm{dim}(V)}, \mathcal{O}_{\mathbb{P}^{\mathrm{dim}(V)}}(1))$; or
\item $(Q,\mathcal{O}_Q(1))$, where $Q\subset \mathbb{P}^{\dim(V)+1}$ is a hyperquadric; or
\item $(\mathbb{P}(\mathcal{E}),\mathcal{O}_{\mathbb{P}(\mathcal{E})}(1))$ where $\mathcal{E}$ is an ample vector bundle of rank $\dim(V)$ over $\mathbb{P}^1$; or
\item $(\mathbb{P}^2,\mathcal{O}_{\mathbb{P}}^2(2))$
\end{itemize}
Suppose first that $\dim(V)=2$. If $(V,L)$ is $(\mathbb{P}^{1}, \mathcal{O}_{\mathbb{P}^{1}}(1))$ or $(Q,\mathcal{O}_{\mathbb{P}^{3}}(1))$, the divisor $K_V+2L_V$ will not be ample. If $(V,L)$ is a $\mathbb{P}^{1}$-bundle over $\mathbb{P}^{1}$, then $K_V+2L_V$ is trivial on each fiber, contradicting to the fact that $K_V+2L_V$ is ample. Hence we have $(V,L)\cong (\mathbb{P}^2,\mathcal{O}_{\mathbb{P}}^2(2))$.

If $\mathrm{dim}(V)=1$, we have that $(V,L_V)\cong (\mathbb{P}^1,\mathcal{O}_{\mathbb{P}^1}(a))$ with $a\geq 3$. By  the following \cref{example1} we know that for $n\geq 2$, a generalized cone $C_n(\mathbb{P}^1,\mathcal{O}_{\mathbb{P}^1}(a))$ has singularities worse than canonical.

Hence when $(X,L)$ is a generalized cone, we have that
$(X,L) \cong C_n(\mathbb{P}^2,\mathcal{O}_{\mathbb{P}}^2(2))$.
\end{proof}

We give some characterizations of generalized cones over $\mathbb{P}^1$.
\begin{example}
\label{example1}
Let $(X,L)=C_n(\mathbb{P}^1,\mathcal{O}_{\mathbb{P}^1}(a))$ be a generalized cone with $a \geq 3$ and $n\geq 2$. We have 
\begin{enumerate}
\item $X$ has klt singularities and $X$ is not canonical;
\item the nefvalue of $L$ is $n-\frac{a-2}{a}$;
\item $K_X+(n-1)L$ is not pseudo-effective.
\end{enumerate}
\end{example}

\begin{example}
\label{nonexample}
Let $(X,L)=C_n(\mathbb{P}^1,\mathcal{O}_{\mathbb{P}^1}(a))$ be a generalized cone with $a \leq 2$ and $n\geq 2$. Then $K_X+(n-1)L\in \mathrm{Pseff}(X)$.
\end{example}
For the proof of the above two examples, see \citep[Lemma~3.11 and 3.12]{Liu-thesis}.

\subsection*{Proof of \cref{Canonicalcase}}
The proof is by combining all the precedent results.
\begin{proof}
By \cref{r}, we have that $(X,L)\sim_{\mathrm{bir}} (X',L')$, where $X'$ is a normal projective variety with canonical $\mathbb{Q}$-factorial singularity, $K_{X'}+(n-1)L'\notin \mathrm{Pseff}(X')$ and 
\begin{enumerate}
\item[(0-i)] Either $\tau(L')$ is finite;
\item[(0-ii)] or there is a Mori fiber space structure $\phi:X'\rightarrow W$ and a rational number $\tau>(n-1)$ such that $L'$ is $\phi$-ample and $K_{X'}+\tau L'\sim_{\mathbb{Q},\phi}0$.
\end{enumerate}
In the first case, we have that $r(L')=\frac{1}{\tau(L')}>0$, hence by Kawamata rationality theorem there exists an $K_X'$-negative extremal ray $R_0=\mathbb{R}_{\geq 0}[C_0]$ such that $(r(L')K_{X'}+L')\cdot C_0=0$. Hence $L'\cdot C_0>0 $. In the second case, take $R_0=\mathbb{R}_{\geq 0}[C_0]$ be the extremal ray associated to $\phi$. Then $L'\cdot C_0>0$. 

Applying \cref{cl1} on $(X',L')$, we get that $(X',L')$ is the one of the following
\begin{enumerate}
\item $(X',L')\cong (\mathbb{P}^n,\mathcal{O}_{\mathbb{P}^n}(1))$, and $\tau=n+1$;
\item[(2-i)] $(X',L')$ is isomorphic to a $(\mathbb{P}^{n-1},\mathcal{O}_{\mathbb{P}^{n-1}}(1))$-bundle over a smooth curve $C$ and $\tau=n$;
\item[(\protect{$\ast$})] $\Delta(X',L')=0$, $K_{X'}+\tau L'\equiv_{\mathrm{num}} \mathcal{O}_X$ and $n-1<\tau\leq n$.
\end{enumerate}
If we are in case $(\ast)$, apply \cref{cl2}. We have a birational morphism $\mu:X'\rightarrow X''$ such that 
\begin{enumerate}
\item[(a)] $X''$ has canonical singularities, $\mu^{\ast}(K_{X''})=K_{X''}$ ;
\item[(b)] There exists an ample line bundle $L''$ on $X''$ such that $\mu^{\ast}(L'')=L'$;
\item[(c)] $\Delta(X'',L'')=0$ and $K_{X''}+\tau L''\equiv_{\mathrm{num}} \mathcal{O}_{X''}$.
\end{enumerate}

In particular we have that $(X',L')\sim_{\mathrm{bir}} (X'',L'')$. Now apply \cref{cl3} to $(X'',L'')$. We have that $(X'',L'')$ is isomorphic to the following pair:
\begin{enumerate}
\item[(3)] $(X'',L'')\cong (Q,\mathcal{O}_{\mathbb{P}^{n+1}}(1))$, where $Q\subset \mathbb{Q}^{n+1}$ is a hyperquadric;
\item[(2-ii)] $(X'',L'')$ is a $\mathbb{P}^{n-1}$-bundle over $\mathbb{P}^1$ and $L$ restricted to each fiber is $\mathcal{O}_{P^{n-1}}(1)$;
\item[(4)] $(X'',L'')\cong (\mathbb{P}^2,\mathcal{O}_{\mathbb{P}}^2(2))$;
\item[(5)] $(X'',L'')\cong C_n(\mathbb{P}^2,\mathcal{O}_{\mathbb{P}}^2(2))$ is a generalized cone over $(\mathbb{P}^2,\mathcal{O}_{\mathbb{P}}^2(2))$
\end{enumerate}
Thus we get the list stated in \cref{Canonicalcase} 
\end{proof}

\section{Normal polarized varieties}
\label{npv}
With the help of canonical modification \citep[Theorem~1.31]{kol13}, we can give a classification theorem for normal polarized varieties with $\mathbb{Q}$-Gorenstein singularities.
\subsection*{Proof of \cref{Cl}}

\begin{proof}
Apply \citep[Theorem~1.31]{kol13} to the pair $(X,0)$. We get the canonical modification $f:X'\rightarrow X$ with $K_{X'}$ being $f$-ample. We take a further step, taking a small $\mathbb{Q}$-factorial modification $g:Y \rightarrow X'$ of $X'$ (\emph{cf.} \citep[Corollary~1.37]{kol13}). We denote the composition $g\circ f$ by $\mu$. As $g$ is small, we have that $K_{Y}=g^{\ast}(K_{X'})$ is $\mu$-nef. Note that $\mu|_{\mu^{-1}(X_{\mathrm{reg}})}:\mu^{-1}(X_{\mathrm{reg}})\rightarrow X_{\mathrm{reg}}$ is an isomorphism.

We have that $\mu_{\ast}(\omega_Y)|_{X_{\reg}}\cong \omega_X|_{X_{\mathrm{reg}}}$ for the canonical sheaves $\omega_Y=\mathcal{O}_Y(K_Y)$ and $\omega_X=\mathcal{O}_X(K_X)$. Note that $\mu_{\ast}(\omega_Y)$ is torsion-free, so we have an injection $\mu_{\ast}(K_Y)\rightarrowtail K_X$. By the projection formula we have an injection 
\begin{center}
$\mathcal{O}_X(\mu_{\ast}(K_Y+(n-1)\mu^{\ast}L))\rightarrowtail \mathcal{O}_X(K_X+(n-1)L)$.
\end{center}
As $K_X+(n-1)L$ is not pseudo-effective, we know that neither is $K_Y+(n-1)\mu^{\ast}(L)$. 
We set $\mu^{\ast}(L)=M$. As $M$ is nef and big, we know that $M\in \mathrm{Pseff}(Y)$. Note that $K_Y$ is not pseudo-effective, hence it is not nef.

Let $R=\mathbb{R}_{\geq 0}[C]$ be a $K_Y$-negative extremal ray, with $C\subset Y$ a rational curve. We have that $K_Y\cdot C<0$. As $K_Y$ is $\mu$-nef, we know that $C$ is not contracted by $\mu$. Hence $\mu(C)\subset X$ has dimension $1$. The intersection number $M\cdot C=\mathrm{deg}(C/\mu(C)) L\cdot \mu(C)$ is positive, since $L$ is ample. Thus for any $K_Y$-negative extremal ray $R$, one has $M\cdot R>0$. By \cref{r}, we obtain that $\mathrm{r}(M)>0$ and $\tau(M)>n-1$. 
By \cref{cl1} applied to $(Y,M)$, we have one of the following cases:
\begin{enumerate}
\item[(i)] $(Y,M)\cong (\mathbb{P}^n,\mathcal{O}_{\mathbb{P}^n}(1))$, and $\tau=n+1$, or
\item[(ii)] $(Y,M)$ is isomorphic to a $(\mathbb{P}^{n-1},\mathcal{O}_{\mathbb{P}^{n-1}}(1))$-bundle over a smooth curve $C$ and $\tau=n$, or
\item[(iii)]$\Delta(Y,M)=0$, $K_Y+\tau M\sim_{\mathbb{Q}} \mathcal{O}_Y$ and $n-1<\tau\leq n$.
\end{enumerate}

In case (i), we have a birational morphism $\mu:\mathbb{P}^n\rightarrow X$ with $\mu^{\ast}(L)=\mathcal{O}_{\mathbb{P}^n}(1)$. We have that $\NE(\mathbb{P}^n/X)=0$ since both $L$ and $\mathcal{O}_{\mathbb{P}^n}(1)$ are ample. By \citep[Proposition~1.14]{deb}, the morphism $\mu$ is an isomorphism. We have case ($1$) in \cref{Cl}.

In case (ii), we have a birational morphism $\mu:\mathbb{P}(\mathcal{V})\rightarrow X$, such that $K_{\mathbb{P}(\mathcal{V})}$ is $\mu$-nef. We denote by $\xi$ the pull-back $\mathcal{O}_{\mathbb{P}(\mathcal{V})}(1)=\mu^{\ast}(L)$. We know that $\xi$ is nef and big. 

We first note that $\xi$ is ample if and only if $\mu$ is an isomorphism. In fact, if $\mu$ is an  isomorphism, then we have $\xi$ is ample. Conversely, if $\xi$ is ample, we have $\NE(\mathbb{P}(\mathcal{V})/X)=0$ and hence $\mu$ is an isomorphism. In this case, we have that 
\begin{center}
$K_{\mathbb{P}(\mathcal{V})}+(n-1)\xi=\pi^{\ast}(K_C+\mathrm{det}\mathcal{V})-\xi$
\end{center}
is not pseudo-effective.  %by \citep[Page~450]{Fulger} $\mathrm{Pseff}(\mathbb{P}(\mathcal{V}))=\mathbb{R}_{\geq 0}f+\mathbb{R}_{\geq 0}(\xi+\nu^{(1)}f)$ for some $\nu^{(1)}\in \mathbb{Q}$.
In fact, the general fiber $f$ is from a covering family and we have that $K_{\mathbb{P}(\mathcal{V})}+(n-1)\xi|_f=\mathcal{O}_f(-1)$. Hence by the BDPP theorem (\emph{cf.} \citep[Theorem~11.4.19]{Laz04}), we know that $K_{\mathbb{P}(\mathcal{V})}+(n-1)\xi$ is not pseudo-effective. Thus we get (\textit{2,i}).

 Now suppose that $\xi$ is not ample. Then $\mu$ is not an ismorphism. We have the following diagram: 
\begin{center}
$\xymatrix{(\mathbb{P}(\mathcal{V}),\xi)\ar[d]_{\pi} \ar[r]^{\mu} & (X,L)\\
C}$.
\end{center}
We know that $\rho(\mathbb{P}(\mathcal{V}))=2$. As $\mu \neq \pi$, we have that
\begin{center}
$\overline{\mathrm{NE}}(\mathbb{P}(\mathcal{V}))=\mathrm{NE}(\pi)+\mathrm{NE}(\mu)$.
\end{center}
We denote a general fiber of $\pi$ by $f$. By \citep[Page~450]{Fulger}, we know that $\overline{\mathrm{NE}}(\mathbb{P}(\mathcal{V}))$ has as extremal rays $\mathbb{R}_{\geq 0}\xi^{n-2}f$ and  $\mathbb{R}_{\geq 0}(\xi^{n-1}+\nu^{(n-1)} \xi^{n-2}f)$ for some $\nu^{(n-1)} \in \mathbb{Q}$. Note that $\mathbb{P}^1=\xi^{n-2}f$ is contracted by $\pi$. Hence $\mathrm{NE}(\pi)=\mathbb{R}_{\geq 0}\xi^{n-2}f$. We have $K_{\mathbb{P}(\mathcal{V})}=\pi^{\ast}(K_C+\mathrm{det}(\mathcal{V}))-n\xi$. Hence $K_{\mathbb{P}(\mathcal{V})}\cdot \xi^{n-2}f=-n$. Thus $\pi$ is the Mori contraction associated to the extremal ray $\mathbb{R}_{\geq 0}\xi^{n-2}f$.
As $\NE(\mu)$ is an extremal ray, we know that $\mu$ is an extremal contraction. By \citep[Proposition~2.5.]{KM98}, we know that $\mu$ is either small or divisorial.

If $\mu$ is small, we have that $K_{\mathbb{P}(\mathcal{V})}=\mu^{\ast}(K_X)$. As $\rho(X)=1$, we have that $K_X\equiv_{\mathrm{num}}mL$ for some $m\in \mathbb{Q}$. Hence $K_{\mathbb{P}(\mathcal{V})}\equiv_{\mathrm{num}} m\xi$. We have that
\begin{center}
$m= m\xi\cdot\xi^{n-2}f=K_{\mathbb{P}(\mathcal{V})}\cdot \xi^{n-2}f  =-n$.
\end{center}
 Thus we get that $K_X+nL\num\mathcal{O}_X$. By \cref{KOG}, we have that $(X,L)\cong (Q,\mathcal{O}_Q(1))$ where $Q\subset \mathbb{P}^{n+1}$ is a hyperquadric. Hence we are in case (3) of \cref{Cl}.

If $\mu$ is divisorial, we denote the exceptional divisor by $E=\mathrm{exc}(\mu)$. Note that $\mathcal{V}$ is nef, since $\mu^{\ast}(L)=\xi=\mathcal{O}_{\mathcal{V}}(1)$ is nef. We have a unique exact sequence of locally free sheaves:
\begin{center}
$0\rightarrow \mathcal{A} \rightarrow \mathcal{V}\rightarrow \mathcal{Q}\rightarrow 0$.
\end{center}
with $\mathcal{A}$ being an ample vector bundle and $\mathcal{Q}$ being numerically flat. If $l\subset \mathbb{P}(\mathcal{V})$ is a curve such that $\xi \cdot l=0$, we have that $l\subset \mathbb{P}(\mathcal{Q})$. Thus we have that $E\subset \mathbb{P}(\mathcal{Q})$. In particular, $\mathrm{rk}(\mathcal{Q})=n-1$ and $E= \mathbb{P}(\mathcal{Q})$. We denote the bundle morphism by $\pi': \mathbb{P}(\mathcal{Q})\rightarrow C$. Now we compute $E|_E$:
\begin{align*}
E|_E &= (K_{\mathbb{P}(\mathcal{Q})}-K_{\mathbb{P}(\mathcal{V})})|_E\\
     &=\pi'^{\ast}(K_C+\mathrm{det}\mathcal{Q})-(n-1)\xi|_E-(\pi^{\ast}(K_C+\mathrm{det}\mathcal{V})-n\xi)|_E\\
     &=\pi'^{\ast}(\mathrm{det}\mathcal{Q}-\mathrm{det}\mathcal{V})+\xi|_E\\
     &=\pi'^{\ast}(-\mathcal{A})+\xi|_E .
\end{align*}
Take a rational curve $l$ that is in the fiber of $\pi'$. We have  that $E\cdot l=E|_E\cdot l=1$. Now write $K_{\mathbb{P}(\mathcal{V})}=\mu^{\ast}(K_X)+\lambda E$. As $\rho(X)=1$, we have that $K_X\equiv_{\mathrm{num}}m L$ for some $m\in \mathbb{Q}$. As $K_X+(n-1)L\equiv_{\mathrm{num}} (m+n-1)L\notin \mathrm{Pseff}(X)$, we have that $m+n< 1$. Intersecting with $l$, we get that
\begin{center}
$-n=K_{\mathbb{P}(\mathcal{V})}\cdot l =(\mu^{\ast}(K_X)+\lambda E)\cdot l=(m\xi+\lambda E)\cdot l=m+\lambda$.
\end{center}
Hence $\lambda=-m-n>-1$ and $X$ has klt singularities. A $\pi'$-fiber is isomorphic to $\mathbb{P}^{n-2}$ and is mapped isomorphically onto its image by $\mu$. Hence each non-trivial $\mu$-fiber has dimension $1$.
 As $X$ has klt singularities, \textit{a fortiori} $(X,0)$ is dlt. Applying \citep[Corollary~1.5-(1)]{HM07} to the birational morphism $\mu$, each $\mu$-fiber is rationally chain connected. Hence a non trivial fiber has $\mathbb{P}^1$ as its normalization. We have thus a finite map $\pi'|_{\mathbb{P}^1}:\mathbb{P}^1\rightarrow C$. Thus $C=\mathbb{P}^1$ and $\mathcal{V}=\mathcal{O}_{\mathbb{P}^1}(a)\oplus \mathcal{O}_{\mathbb{P}^1}^{\oplus (n-1)}$. 
 
 Consider the morphism $\psi:\mathbb{P}(\mathcal{O}_{\mathbb{P}^1}(a)\oplus \mathcal{O}_{\mathbb{P}^1}^{\oplus (n-1)})\rightarrow C_n(\mathbb{P}^1,\mathcal{O}_{\mathbb{P}^1}(a))$. We know that $\psi$ does not contract the extremal ray $\NE(\pi)$. Hence $\NE(\psi)=\NE(\mu)$ and by \citep[Proposition~1.14]{deb} $X=C_n(\mathbb{P}^1,\mathcal{O}_{\mathbb{P}^1}(a))$. As $L$ and the restriction of $\mathcal{O}_{\mathbb{P}(H^0(\mathbb{P}^1,\mathcal{O}_{\mathbb{P}^1}(a)))}$ to $C_n(\mathbb{P}^1,\mathcal{O}_{\mathbb{P}^1}(a))$ agree outside a subscheme of codimension at least $2$, we have that $(X,L)=C_n(\mathbb{P}^1,\mathcal{O}_{\mathbb{P}^1}(a))$. As $K_X+(n-1)L\notin \mathrm{Pseff}(X)$, \cref{nonexample} implies $a\geq 3$. Now \cref{example1} shows that for all $a\geq 3$, the divisor  $K_X+(n-1)L$ is not pseudo-effective and $X$ is klt. Thus we get (\textit{2,ii}).

If we are in case (iii), apply \cref{cl2} to $(Y,M)$. We have a crepant resolution $\nu:Y\rightarrow Y_{\mathrm{can}}$ with an ample divisor $A$ on $Y_{\mathrm{can}}$ such that $\nu^{\ast}(A)=M$, the $\Delta$-genus satisfies $\Delta(Y_{\mathrm{can}},A)=0$ and $K_{Y_{\mathrm{can}}}+\tau A\equiv_{\mathrm{num}} \mathcal{O}_{Y_{can}}$. By \cref{cl3}, $(Y_{\mathrm{can}},A)$ is isomorphic to one of the following:
\begin{enumerate}
\item[(a)] $(Q,\mathcal{O}_{\mathbb{P}^{n+1}}(1))$, where $Q\subset \mathbb{Q}^{n+1}$ is a hyperquadric;
\item[(b)] a $\mathbb{P}^{n-1}$-bundle over $\mathbb{P}^1$ and $L$ restricted to each fiber is $\mathcal{O}_{\mathbb{P}^{n-1}}(1)$;
\item[(c)] $(\mathbb{P}^2,\mathcal{O}_{\mathbb{P}^2}(2))$;
\item[(d)]  a generalized cone $ C_n(\mathbb{P}^2,\mathcal{O}_{\mathbb{P}^2}(2))$ over $(\mathbb{P}^2,\mathcal{O}_{\mathbb{P}^2}(2))$.
\end{enumerate}

Case (b) is a special case of (ii) treated above. In case (a),(c) and (d), we have the following diagram 
\begin{center}
$\xymatrix{(Y,M)\ar[r]^{\mu} \ar[d]^{\nu}& (X,L)\\
(Y_{\mathrm{can}},A)\ar@{-->}[ur]_h}$,
\end{center}
where $h$ is a birational map $a$ $priori$ not necessarily defined on all $Y_{\mathrm{can}}$. We now show $h$ is indeed an isomorphism and $h^{\ast}(L)=A$. Let $C\subset Y$ be a curve. We have
\begin{equation*}
\nu^{\ast}(A)\cdot C=M\cdot C=\mu^{\ast}(L)\cdot C.
\end{equation*}
As $A$ and $L$ are both ample, we have that $\mathrm{NE}(\mu)= \mathrm{NE}(\nu)$. \citep[Proposition~1.14]{deb} implies that $h$ is an isomorphism. As $h^{\ast}(L)$ agrees with $A$ outside a subscheme of codimension at least $2$, we have that $h^{\ast}(L)=A$. Hence we get case $(3)$, $(4)$, $(5)$ in \cref{Cl}.
\end{proof}
 
 Using similar methods, we can classify log pairs $(X,\Delta)$ with $\Delta$ a reduced Weil divisor. 
 \subsection*{Proof of \cref{Clr}}
\begin{proof}

 We take a canonical modification of $X$ then take a small $\mathbb{Q}$-factorialization. We get a birational morphism $\mu:Y\rightarrow X$ such that $Y$ has $\mathbb{Q}$-factorial canonical singularities, $K_Y$ is $\mu$-nef and $\mu$ is isomorphic over regular points of $X$. Set $\Delta':=\mu^{-1}_{\ast}(\Delta)$. Then $\Delta'$ is a reduced divisor. Let $\omega_Y=\mathcal{O}_Y(K_Y)$ and $\omega_X=\mathcal{O}_X(K_X)$ be the canonical sheaves.  We know that 
\begin{equation*}
\mu_{\ast}(\omega_Y\otimes\mathcal{O}_Y(\Delta'))|_{X_{\reg}}\cong (\omega_X\otimes\mathcal{O}_X(\Delta))|_{X_{\reg}}.
\end{equation*}
 The sheaf  $\mu_{\ast}(\omega_Y\otimes\mathcal{O}_Y(\Delta'))$ is torsion-free, so we have an injection 
 \begin{equation*}
 \mu_{\ast}(\omega_Y\otimes\mathcal{O}_Y(\Delta'))\rightarrowtail \omega_X\otimes\mathcal{O}_X(\Delta) .
 \end{equation*}
Tensoring with $\mu^{\ast}(L^{\otimes n-1})$, we have an injection
\begin{center}
$\mu_{\ast}(\omega_Y\otimes\mathcal{O}_Y(\Delta')\otimes\mu^{\ast}(L^{\otimes n-1}))\rightarrowtail \omega_X\otimes\mathcal{O}_X(\Delta)\otimes L^{\otimes n-1}$.
\end{center}
As $(K_X+\Delta)+(n-1)L$ is not pseudo-effective,  neither is $(K_Y+\Delta')+(n-1)\mu^{\ast}(L)$. 
We set $\mu^{\ast}(L)=:M$. As $\Delta'$ is effective, the divisor $K_Y+(n-1)M$ is not pseudo-effective.

As $K_Y$ is $\mu$-nef and $M=\mu^{\ast}(L)$, for any $K_Y$-negative extremal ray $R$, we have that $M\cdot R>0$. Hence we can apply \cref{cl1} to $(Y,M)$ and get:

\begin{enumerate}
\item[(a)] $(Y,M)\cong (\mathbb{P}^n,\mathcal{O}_{\mathbb{P}^n}(1))$, and $\tau=n+1$;
\item[(b)] $(Y,M)$ is isomorphic to a $(\mathbb{P}^{n-1},\mathcal{O}_{\mathbb{P}^{n-1}}(1))$-bundle over a smooth curve $C$ and $\tau=n$;
\item[(c)] $\Delta(Y,M)=0$, $K_Y+\tau M\sim_{\mathbb{Q}} \mathcal{O}_Y$ and $n-1<\tau(M)\leq n$.
\end{enumerate}

If we are in case $(a)$,  the morphism $\mu$ is an isomorphism. The divisor $\Delta$ is given by $\mathcal{O}_{\mathbb{P}^n}(a)$ for some $a\geq 1$. We have that $K_X+(n-1)L+D=\mathcal{O}_{\mathbb{P}^n}(a-2)$. Hence the only possible choice is $a=1$ and $\Delta=D$ is a hyperplane. We are thus in case $(1)$ of \cref{Clr}.

If we are in case $(b)$, we have a diagram 
\begin{center}
$\xymatrix{(\mathbb{P}(\mathcal{V}),\xi)\ar[d]_{\pi} \ar[r]^\mu & (X,L)\\
C}$,
\end{center}
where $\xi=\mathcal{O}_{\mathbb{P}(\mathcal{V})}(1)$ and $K_{\mathbb{P}(\mathcal{V})}$ is $\mu$-nef.

 First we assume that $\mu$ is an isomorphism. In this case the vector bundle $\mathcal{V}$ is ample and $X=\mathbb{P}(\mathcal{V})$ is $\mathbb{Q}$-factorial. Let $F=\mathbb{P}^{n-1}$ be a general fiber of $\pi$ . Suppose that $\Delta|_F=\mathcal{O}_F(d)$ for some natural number $d\geq 0$. We have the following equality:
 \begin{center}
 $(K_{\mathbb{P}(\mathcal{V})}+\Delta+(n-1)\xi)|_F=(\pi^{\ast}(K_C+\mathrm{det}(\mathcal{V}))+\Delta-\xi)|_F=\mathcal{O}_F(d-1)$.
 \end{center}
 If $d=0$, let $D$ be a component of $\Delta$, then $D|_F=\mathcal{O}_F(0)$. We claim that $D$ is one of the general fiber. In fact, suppose by contradiction that there exists a general fiber $F$ such that $D\cap F\neq \emptyset$ and $D\nsubseteq F$. Then there will be a curve $l\subset F\setminus D$ such that $l\cap D\neq \emptyset$. Then we have that $D\cdot l>0$, a contradiction. Thus we have that $\Delta=\sum F_i$ is a finite sum of distinct general fibers.
  Let $l$ be a rational curve in $F$. We have that 
 \begin{equation*}
 (K_{\mathbb{P}(\mathcal{V})}+\Delta+(n-1)\xi)\cdot l=-1.
 \end{equation*}
Since $F$ is a member of a covering family, BDPP theorem (\emph{cf.} \citep[Theorem~11.4.19]{Laz04}) implies that $K_{\mathbb{P}(\mathcal{V})}+(n-1)\xi+\Delta$ is not pseudo-effective. We  are thus in case $(2.i)$ of \cref{Clr}.
 
  If $d>0$, let $D$ be a component of $\Delta$ such that $D|_F=\mathcal{O}_F(d')$ for some $d'>0$. By \cref{dimension-control} after the proof, we have that $n=\mathrm{dim}(\mathbb{P}(\mathcal{V}))=2$. We first show that $C=\mathbb{P}^1$. The non pseudo-effective divisor in question $K_X+D+(n-1)L$  thus becomes $K_{\mathbb{P}(\mathcal{V})}+D+\xi$. We have that $(K_{\mathbb{P}(\mathcal{V})}+D+\xi)|_F=\mathcal{O}_F(d'-1)$ which is nef.
As $K_{\mathbb{P}(\mathcal{V})}+D+\xi$ is not nef,  we note that there will be an extremal ray $R'$ which is not generated by the fiber of $\pi$, such that $(K_{\mathbb{P}(\mathcal{V})}+D+\xi)\cdot R'<0$. In particular, we have that $R'$ is $(K_{\mathbb{P}(\mathcal{V})}+D)$-negative. By the cone theorem, we know that $R'=\mathbb{R}_{\geq 0}[l]$ for a rational curve $l$. Note that $l$ maps finitely onto $C$. Hence we have $C\cong \mathbb{P}^1$.

Thus $(X,L)$ is a $\mathbb{P}^1$-bundle over $\mathbb{P}^1$ and  \cref{dim-2-class} implies that $(X,\Delta,L)$ is either in cases $(2.i),$ $(2.ii)$ of \cref{Clr} or $(X,L)$ is a hyperquadric of rank $4$, which will be dealt in the following case $(c1)$. 
 
 Assume, from now on, that $\mu$ is not an isomorphism. We know that 
\begin{equation*}
\NE(\mathbb{P}(\mathcal{V}))=\NE(\mu)+\NE(\pi).
\end{equation*}
 and $\pi$ contracts the extremal ray $\mathbb{R}_{\geq 0}\xi^{n-2}f$. The birational morphism  $\mu$ is either small or divisorial.

 If $\mu$ is small, by construction, we have that $K_{\mathbb{P}(\mathcal{V})}+\Delta'=\mu^{\ast}(K_X+\Delta)$. Let $F$ be a general fiber of $\pi$. We have that $\Delta'|_F=\mathcal{O}_F(d)$ for some integer $d\geq 0$. As $K_X+\Delta$ is $\mathbb{Q}$-Cartier and $\rho(X)=1$, we have that $K_X+\Delta\num mL$ for some $m\in \mathbb{Q}$. Hence $K_{\mathbb{P}(\mathcal{V})}+\Delta'\num\mu^{\ast}mL$. Intersect with $\xi^{n-2}f$. We get that $-n+d=m$. Hence $K_X+\Delta+(n-1)L\equiv_{\mathrm{num}}(d-1)L$. Thus $d=0$.Hence $d=0$. If we write $\Delta'=\sum D'_i$ with $D'_i$ distinct prime divisors. We have that $D_i'|_F=\mathcal{O}_F(0)$. Thus the $D_i'$'s are distinct general fibers. As $D_i'=\mu_{\ast}^{-1}(D_i)$ by definition, we get that $D_i'\rightarrow D_i=\mu(D'_i)$ has degree $1$. Thus we are in case $(2.i)$ of \cref{Clr}.   
 
 If $\mu$ is divisorial, we denote the exceptional divisor by $E=\mathrm{exc}(\mu)$. \citep[Proposition~3.36.]{KM98} implies that $X$ is $\mathbb{Q}$-factorial. In particular $K_X$ is $\mathbb{Q}$-Cartier. 
We have a unique exact sequence of locally free sheaves:
\begin{center}
$0\rightarrow \mathcal{A} \rightarrow \mathcal{V}\rightarrow \mathcal{Q}\rightarrow 0$
\end{center}
with $\mathcal{A}$ is an ample vector bundle and $\mathcal{Q}$ is numerically flat. And we know that $E=\mathbb{P}(\mathcal{Q})$ and $E\cdot \xi^{n-2}f=1$. Let $F$ be a general fiber of $\pi$. There exists a $d\geq 0$ such that $\Delta'|_F=\mathcal{O}_F(d)$ . As $K_X+\Delta$ is $\mathbb{Q}$-Cartier and $\rho(X)=1$, there exists an $m\in \mathbb{Q}$ such that $K_X+\Delta\equiv_{\mathrm{num}}mL$. Then $K_X+\Delta+(n-1)L\equiv_{\mathrm{num}}(m+n-1)L\notin\mathrm{Pseff}(X)$. Hence $m+n<1$. We now have 
\begin{center}
$K_{\mathbb{P}(\mathcal{V})}+\Delta'=\mu^{\ast}(K_X+\Delta)+\lambda E$.
\end{center}
Intersect both sides with $\xi^{n-2}f$. We get that $-n+d=m+\lambda$. Since $-(m+n)>-1$, we have that $\lambda\geq -1+d$.

 Now we claim that $d=0$. Suppose by contradiction that $d\geq 1$. Then we have that $(K_{\mathbb{P}(\mathcal{V})}+\Delta'+(n-1)\xi)|_F=\mathcal{O}_F(d-1)$. As $K_{\mathbb{P}(\mathcal{V})}+\Delta'+(n-1)\xi$ is not nef, we know that $\mathrm{NE}(\mu)$ is an $(K_{\mathbb{P}(\mathcal{V})}+\Delta')$-negative extremal ray. 
Note that $(\mathbb{P}(\mathcal{V}),\Delta')$ is log canonical. By the cone theorem, there is a rational curve $l$ whose class $[l]$ is in $\NE(\mu)$. As $l$ maps finitely onto $C$, we know that $C\cong \mathbb{P}^1$. Hence $(X,L)=C_n(\mathbb{P}^1,\mathcal{O}_{\mathbb{P}^1}(a))$. As $K_X+(n-1)L$ is not pseudo-effective, \cref{nonexample} implies that $a\geq 3$. \cref{example1} implies that $K_X\equiv_{\mathrm{num}} (-n+\dfrac{a-2}{a})L$.
Suppose that $\Delta\equiv_{\mathrm{num}} m_2 L$ for some $m_2\in \mathbb{Q}^{+}$. For $\mathbb{P}^1\subset F$ mapped isomorphic to its image, we have that $m_2=m_2 \xi\cdot \mathbb{P}^1=\mu^{\ast}(\Delta)\cdot \mathbb{P}^1=\Delta\cdot \mu(\mathbb{P}^1)\in \mathbb{N}$. Hence $m_2\geq 1$ and $K_X+\Delta+(n-1)L=\dfrac{a-2}{a}L\in \mathrm{Pseff}(X)$, a contradiction. This proves the claim.

Write $\Delta=\sum D_i$ and $\Delta'=\sum D_i'$. Then each $D_i'$ is a general fiber. As $D_i'=\mu_{\ast}^{-1}(D_i)$ by definition, we get that $D_i'\rightarrow D_i$ has degree $1$. Thus we have that $D_i\cong \mu(\mathbb{P}^{n-1})$ are the images of distinct general fibers of $\pi$ and we are in case $(2.i)$ of \cref{Clr}.

If we are in case $(c)$, apply \cref{cl2} to $(Y,M)$. We have a crepant resolution $\nu:Y\rightarrow Y_{\mathrm{can}}$ with an ample divisor $A$ on $Y_{\mathrm{can}}$ such that $\nu^{\ast}(A)=M$, the $\Delta$-genus $\Delta(Y_{\mathrm{can}},A)=0$ and $K_{Y_{\mathrm{can}}}+\tau A\equiv_{\mathrm{num}} \mathcal{O}_{Y_{can}}$. By \cref{cl3}, we have one of the following cases:

\begin{enumerate}
\item[(c1)] $(Y_{\mathrm{can}},A)\cong (Q,\mathcal{O}_{\mathbb{P}^{n+1}}(1))$, where $Q\subset \mathbb{Q}^{n+1}$ is a hyperquadric;

\item[(c2)] $(Y_{\mathrm{can}},A)$ is a $\mathbb{P}^{n-1}$-bundle over $\mathbb{P}^1$ and the restriction of $L$ to each fiber is $\mathcal{O}_{\mathbb{P}^{n-1}}(1)$;

\item[(c3)] $(Y_{\mathrm{can}},A)\cong (\mathbb{P}^2,\mathcal{O}_{\mathbb{P}^2}(2))$;

\item[(c4)] $(Y_{\mathrm{can}},A)\cong C_n(\mathbb{P}^2,\mathcal{O}_{\mathbb{P}^2}(2))$ is a generalized cone over $(\mathbb{P}^2,\mathcal{O}_{\mathbb{P}^2}(2))$.
\end{enumerate}

We have the following diagram
\begin{center}
$\xymatrix{(Y,M)\ar[r]^{\mu} \ar[d]^{\nu}& (X,L)\\
(Y_{\mathrm{can}},A)\ar[ur]_h}$,
\end{center}
such that $h$ is an isomorphism and $\mu^{\ast}(L)=M=\nu^{\ast}(A)$ with $(Y_{\mathrm{can}},A)$ being one of the above four pairs.

In case $(c1)$, after an automorphism of $\mathbb{P}^{n+1}=\mathrm{Proj}(\mathbb{C}[x_0,...x_{n+1}])$, the hyperquadric $Q$ is given by the homogeneous ideal $I_r=(\sum_{0\leq i \leq r} x_i^2)\subset \mathbb{C}[x_0,...x_{n+1}]$ for some $r\geq 2$. By \citep[Exercise~II.6.5]{har78}, the class group $\mathrm{Cl}(Q)$ of $Q$ is the following:
\begin{enumerate}
\item[•] When $r=2$, $\frac{1}{2} [\mathcal{O}_Q(1)]$ is an integral divisor and $\mathrm{Cl}(Q)=\mathbb{Z}\cdot\frac{1}{2} [\mathcal{O}_Q(1)]$. Suppose that $\Delta=k\cdot \frac{1}{2}[\mathcal{O}_Q(1)]$. Write $\Delta=\sum D_i$. Then each . Then
\begin{center}
$K_X+(n-1)L+\Delta=(\frac{k}{2}-1)\mathcal{O}_Q(1)$.
\end{center}
It is not pseudo-effective if and only if $k=1$. Thus $\Delta=D$ is irreducible and is numerically equivalent to a hyperplane $\mathbb{P}^{n-1}$ in $Q$.  We are thus in case $(3.i)$ of \cref{Clr}.

\item[•] When $r=3$, $\mathrm{Cl}(Q)\cong \mathbb{Z}\oplus\mathbb{Z}$. Note that here we can write
\begin{center}
$Q=\mathrm{Proj}\left(\dfrac{\mathbb{C}[x_0, \ldots, x_{n+1}]}{(x_0 x_1-x_2 x_3)}\right)$,
\end{center}
which is a cone of vertex $\mathbb{P}^{n-3}=\left\lbrace x_1=x_2=x_3=0\right\rbrace \subset \mathbb{P}^{n+1}$ with base $\mathbb{P}^1 \times \mathbb{P}^1 \subset \mathbb{P}^3=\left\lbrace x_4=\cdots=x_{n+1}=0 \right\rbrace\subset \mathbb{P}^{n+1}$ (\emph{cf.} \citep[Exercise~I.5.12.(d)]{har78}). If we consider the inclusions $\mathbb{P}^3\subset\mathbb{P}^4\subset\cdots\subset \mathbb{P}^n\subset \mathbb{P}^{n+1}$, then $Q$ is also obtained by taking projective cone in the sense of \citep[Exercise~I.2.10]{har78} of $\mathbb{P}^1 \times \mathbb{P}^1 \subset \mathbb{P}^3$ successively. By \citep[Exercise~II.6.3.(a)]{har78}, we know $\mathrm{Cl}(\mathbb{P}^1 \times \mathbb{P}^1)\cong \mathrm{Cl}(Q)$.  For a hyperplane $H\subset\mathbb{P}^{n+1}$, $H\cap Q$ has type $(1,1)$. The cone over $\mathbb{P}^{1}\times \mathrm{pt}$ has type $(1,0)$ and the cone over $\mathrm{pt}\times \mathbb{P}^{1}$ has type $(0,1)$. Thus $\Delta$ has type $(1,0)$ or type $(0,1)$ and is irreducible. We are thus in case $(3.ii)$ of \cref{Clr}.

\item[•] When $r\geq 4$, $\mathrm{Cl}(Q)=\mathbb{Z}\cdot  [\mathcal{O}_Q(1)]$. Hence $\Delta= d[\mathcal{O}_Q(1)]$, and 
\begin{center}
$K_X+(n-1)L+\Delta\equiv_{\mathrm{num}}\mathcal{O}_Q(d-1)$
\end{center}
 is pseudo-effective. Thus this situation is excluded.

\end{enumerate} 

 The case $(c2)$ is treated in case $(b)$. The case $(c3)$ does not happen.
  
In case $(c4)$, we consider the following diagram
\begin{center}
$\xymatrix
{
E=\mathbb{P}^2\times\mathbb{P}^{n-3} \ar[r]^{\mathrm{pr}_2}\ar@{^{(}->}[d]^i & \mathbb{P}^{n-3}  \ar@{^{(}->}[d] \\ 
T=\mathbb{P}(\mathcal{O}^{\oplus n-2}\oplus \mathcal{O}(2))\ar[r]^-{\psi_{|\xi|}}\ar[d]^{\pi} & C_n(\mathbb{P}^2,\mathcal{O}(2))=X
\\ \mathbb{P}^2
}.$
\end{center}
Since $T$ is a projective bundle, by \citep[Theorem 3.3.(b)]{ful98} we have that
\begin{equation*}
\mathrm{Cl}(T)=\mathbb{Z} [\pi^{\ast}(\mathcal{O}_{\mathbb{P}^2}(1))] \oplus \mathbb{Z} [\xi].
\end{equation*} 
On the other hand, since $E=\mathrm{exc}(\psi_{|\xi|})$ is contracted, we know that the homomorphism $(\psi_{|\xi|})_{\ast}:\mathrm{Cl}(T)\rightarrow \mathrm{Cl}(X)$ is surjective and $\mathrm{rk}(\mathrm{Cl}(X))=1$. We have that $\psi_{|\xi|}^{\ast}(L)=\xi$. Thus $(\psi_{|\xi|})_{\ast}([\xi])=[L]\neq 0$. 
To determine $\mathrm{Cl}(X)$, one just need to know the image $(\psi_{|\xi|})_{\ast}\pi^{\ast}([\mathcal{O}_{\mathbb{P}^2}(1)])$. Let $H$ be a Weil divisor on $T$ such that $\mathcal{O}_T(H)=\pi^{\ast}(\mathcal{O}_{\mathbb{P}^2}(1))$. For example, we can take $H$ to be $\pi^{-1}(l)$ where $l\subset \mathbb{P}^{2}$ is a linear subspace. 
Then it's easy to see that $H\neq E$. Set $G:=(\psi_{|\xi|})_{\ast}H$. As $L$ is ample, the class $[L]$ is non-zero in $\mathrm{Cl}(X)\otimes \mathbb{Q}$. Take $m\in \mathbb{Q}$ such that $[G]=m[L]$ in $\mathrm{Cl(X)}\otimes \mathbb{Q}$. We have that
\begin{equation}
\label{Ediv}
\psi_{|\xi|}^{\ast}(G)\sim_{\mathbb{Q}}(\psi_{|\xi|})^{-1}_{\ast}(G)+a E,
\end{equation}
with $(\psi_{|\xi|})^{-1}_{\ast}(G)=H$.
By the canonical bundle formula, we have that
\begin{align*}
K_T &=\pi^{\ast}(\mathcal{O}_{\mathbb{P}^2}(-1))-(n-1)\xi \text{ } \text{ and}\\
K_E  &=\mathrm{pr}_1^{\ast}(\mathcal{O}_{\mathbb{P}^2}(-2))-(n-2)\xi|_E .
\end{align*}
Hence we have that
\begin{equation*}
\mathcal{O}_{E}(E)=\mathrm{pr}_1^{\ast}(\mathcal{O}_{\mathbb{P}^2}(-2))\otimes \mathrm{pr}_2^{\ast}(\mathcal{O}_{\mathbb{P}^{n-3}}(1)).
\end{equation*}
Let $C_1=\mathbb{P}^1\times \{\mathrm{pt}\}\subset E$. Then $E\cdot C_1=-2$. We intersect both sides of \cref{Ediv} with $C_1$. As $(\psi_{|\xi|})_{\ast}(C_1)=0$, by the projection  we get that $(\psi_{|\xi|})^{\ast}(G)\cdot C_1=0$. By applying the projection formula to the morphism $\pi|_H: H\rightarrow \mathbb{P}^2$, we get that $H\cdot C_1=1$. Hence $a=\frac{1}{2}$. Thus we have that
\begin{equation}
\label{eqxi}
m[\xi]=\pi^{\ast}[\mathcal{O}_{\mathbb{P}^2}(1)]+\frac{1}{2}E.
\end{equation}
Let $F=\mathbb{P}^{n-2}$ be a fiber of $\pi$ such that $F\cap E\neq\emptyset$. Then $E\cap F=\mathbb{P}^{n-3}\subset \mathbb{P}^{n-2}=F$. Take $C_2=\mathbb{P}^1\subset F$ and intersect both side of \cref{eqxi} with $C_2$. We have that $\xi \cdot C_2=1$ and $\pi^{\ast}[\mathcal{O}_{\mathbb{P}^2}(1)]\cdot C_2=0$ and $E\cdot C_2=1$. Thus we get that $m=\frac{1}{2}$. Hence we know that $\mathrm{Cl}(X)\otimes \mathbb{Q}=\mathbb{Q}\cdot\frac{1}{2}[L]$. Let $D$ be a component of $\Delta$. Suppose that
\begin{equation*}
(\psi_{|\xi|})_{\ast}^{-1}[D]=m_1\pi^{\ast}[(\mathcal{O}_{\mathbb{P}^2}(1))]+m_2 [\xi]
\end{equation*}
for some natural numbers $m_1$, $m_2$. We have that $D=(\psi_{|\xi|})_{\ast}(\psi_{|\xi|})_{\ast}^{-1}D\sim_{\mathbb{Q}}(\dfrac{m_1}{2}+m_2)L$. Hence $\frac{m_1}{2}+m_2\geq \frac{1}{2}$. Being a generalized cone, $X$ is $\mathbb{Q}$-factorial. For the  $\mathbb{Q}$-Cartier divisor $K_X$ we have
\begin{equation*}
\label{lceq}
K_X=(\psi_{|\xi|})_{\ast}(K_T)=(\psi_{|\xi|})_{\ast}(\pi^{\ast}(\mathcal{O}(-1))-(n-1)\xi)=-(n-\frac{1}{2})[L].
\end{equation*}
Now $K_X+D+(n-1)L\equiv_{\mathrm{num}}(\dfrac{m_1-1}{2}+m_2)L$ is pseudo-effective. We thus exclude case $(c4)$.

\end{proof}

\begin{lemma}
\label{dimension-control}
Let $(X,D)=(\mathbb{P}(\mathcal{V}),D)$ be a log canonical pair, where $\pi:\mathbb{P}(\mathcal{V})\rightarrow C$ is a projective bundle over a smooth curve $C$ and $\mathcal{V}$ is an ample vector bundle of rank $n$. If for a general fiber $F$, we have that $D|_F=\mathcal{O}_F(d)$ for some $d>0$. Then $\dim(X)=\dim(\mathbb{P}(\mathcal{V}))=2$.
 \end{lemma}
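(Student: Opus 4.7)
The plan is to argue by contradiction, assuming $\dim X = n \geq 3$. As a preliminary I note that in the context of \cref{Clr} where this lemma is invoked, $D$ is a component of a reduced boundary $\Delta$ with $(K_X + \Delta) + (n-1)L \notin \mathrm{Pseff}(X)$ and $L = \xi := \mathcal{O}_{\mathbb{P}(\mathcal{V})}(1)$; since $\Delta - D \geq 0$ this passes to the implicit input $K_X + D + (n-1)\xi \notin \mathrm{Pseff}(X)$. Using $\mathrm{Pic}(X) = \mathbb{Z}\xi \oplus \pi^{\ast}\mathrm{Pic}(C)$ I would write $D \equiv_{\mathrm{num}} d\xi + \pi^{\ast}N$ for some $N \in \mathrm{Pic}(C)$, and the canonical bundle formula then yields
\begin{equation*}
K_X + D + (n-1)\xi \equiv_{\mathrm{num}} (d-1)\xi + \pi^{\ast}(K_C + \det\mathcal{V} + N).
\end{equation*}

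The main case is $d = 1$: the condition $D|_F = \mathcal{O}_F(1)$ forces $D = \mathbb{P}(\mathcal{Q})$ for a rank $n - 1$ quotient $\mathcal{V} \twoheadrightarrow \mathcal{Q}$ (with the class $D \equiv \xi - \pi^{\ast}\mathcal{L}$ where $\mathcal{L} = \ker$), and the displayed class reduces to $\pi^{\ast}(K_C + \det \mathcal{Q})$. Since $\mathcal{Q}$ is ample as a quotient of the ample $\mathcal{V}$, we have $\deg \det \mathcal{Q} > 0$, and the non-pseudo-effectivity forces $\deg(K_C + \det \mathcal{Q}) < 0$, hence $g(C) = 0$. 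The Grothendieck splitting $\mathcal{Q} = \bigoplus \mathcal{O}_{\mathbb{P}^1}(a_i)$ then gives $\deg \det \mathcal{Q} \geq n - 1$ (since each $a_i \geq 1$ by ampleness), so the combined inequality $n - 1 \leq \deg \det \mathcal{Q} \leq 1$ forces $n \leq 2$, a contradiction.

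For $d \geq 2$ I would run a parallel numerical argument: after reducing to $C = \mathbb{P}^1$ (the ampleness of $\mathcal{V}$ together with non-pseudo-effectivity of the displayed class cannot coexist for $g(C) \geq 1$) and writing $\mathcal{V} = \bigoplus \mathcal{O}_{\mathbb{P}^1}(a_i)$ with $1 \leq a_1 \leq \cdots \leq a_n$, effectivity of $D$ (from $h^0(C, \mathrm{Sym}^d \mathcal{V} \otimes N) > 0$) translates to $\deg N \geq -d\, a_1$, while non-pseudo-effectivity of $(d-1)\xi + \pi^{\ast}(K_C + \det\mathcal{V} + N)$ (from the vanishing of sections of $\mathrm{Sym}^{k(d-1)}\mathcal{V} \otimes (K_C + \det\mathcal{V} + N)^{\otimes k}$ for all $k \geq 1$) translates to $\deg N < -(d-1) a_1 + 2 - \sum_i a_i$. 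Combining these two inequalities yields $\sum_{i \geq 2} a_i < 2$; since each $a_i \geq 1$, this forces $n - 1 < 2$, again a contradiction. The main obstacle I anticipate is justifying this numerical pseudo-effectivity criterion on $\mathbb{P}(\mathcal{V})$ through the splitting of $\mathrm{Sym}^k \mathcal{V}$ on $\mathbb{P}^1$, together with the reduction to $C = \mathbb{P}^1$ in the $d \geq 2$ case; the rest of the argument is routine.
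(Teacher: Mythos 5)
Your argument is essentially correct and takes a genuinely different route from the paper. The paper's proof goes through MMP machinery: it takes a thrifty dlt modification of $(\mathbb{P}(\mathcal{V}),D)$, produces a $(K+\Delta^{\mathrm{dlt}})$-negative extremal ray $R$ with $\xi\cdot R>0$ whose contraction has fibres of dimension at most one (this is where $d>0$ enters, via the nefness of $\mathcal{O}_F(d-1)$ on fibres of $\pi$), and then invokes Kawamata's bound on the length of extremal rays to get $0>-2+(n-1)$. You replace all of this by an explicit computation of divisor classes on $\mathbb{P}(\mathcal{V})$ together with the description of the pseudo-effective cone of a projective bundle over a curve. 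Your route is more elementary and never uses that the pair is log canonical --- only that $D$ is effective with $D|_F=\mathcal{O}_F(d)$ and that $K_X+D+(n-1)\xi$ is not pseudo-effective. You are right that this last hypothesis is implicit rather than stated in the lemma; the paper's own proof uses it in exactly the same tacit way, so reading it off from the context of \cref{Clr} is the intended interpretation. What the paper's approach buys is independence from any explicit knowledge of $\mathrm{Pic}$ or of the pseudo-effective cone, so it would survive in settings where the total space is not literally a projective bundle.

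Two points in your write-up need repair, though neither is fatal. First, an indexing slip: with your ordering $1\le a_1\le\cdots\le a_n$, both numerical criteria are governed by the \emph{largest} twist $a_n$, not $a_1$: one has $H^0(\mathbb{P}^1,\mathrm{Sym}^d\mathcal{V}\otimes N)\neq 0$ iff $\deg N\ge -d\,a_n$, and $(d-1)\xi+\pi^{\ast}(K_C+\det\mathcal{V}+N)$ fails to be pseudo-effective iff $(d-1)a_n+\deg(K_C+\det\mathcal{V}+N)<0$. Since your slip is consistent between the two inequalities, subtracting them still yields $\sum_{i<n}a_i<2$ and hence $n\le 2$, so the conclusion survives, but the inequalities as you state them are individually false. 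Second, the reduction to $g(C)=0$ for $d\ge 2$ is asserted without proof. It does hold, and the clean way to see it (which also subsumes your $d=1$ case and removes the case division altogether) is via Harder--Narasimhan slopes: effectivity of $D$ gives $\deg N\ge -d\,\mu_{\max}(\mathcal{V})$, non-pseudo-effectivity gives $(d-1)\mu_{\max}(\mathcal{V})+(2g-2)+\deg\det\mathcal{V}+\deg N<0$, whence $\deg\det\mathcal{V}<\mu_{\max}(\mathcal{V})+2-2g$. Since $\mathcal{V}$ ample forces $\mu_{\min}(\mathcal{V})>0$ and hence $\deg\det\mathcal{V}\ge\mu_{\max}(\mathcal{V})+(n-1)\mu_{\min}(\mathcal{V})$, one gets $(n-1)\mu_{\min}(\mathcal{V})<2-2g$; this is impossible for $g\ge 1$, and for $g=0$ one has $\mu_{\min}(\mathcal{V})\ge 1$ and so $n\le 2$ directly.
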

 \begin{proof}
We have that 
\begin{center}
 $(K_{\mathbb{P}(\mathcal{V})}+D+(n-1)\xi)|_F=(\pi^{\ast}(K_C+\mathrm{det}(\mathcal{V}))+D-\xi)|_F=\mathcal{O}_F(d-1)$.
 \end{center}
   We take a \textit{thrifty dlt modification} for $(\mathbb{P}(\mathcal{V}),D)$ as in \citep[Corollary~1.36.]{kol13}, \textit{i.e.}, a proper birational morphism $f:\mathbb{P}(\mathcal{V})^{\mathrm{dlt}}\rightarrow \mathbb{P}(\mathcal{V})$ with a boundary divisor $\Delta^{\mathrm{dlt}}$ such that:
\begin{enumerate}
\item $(\mathbb{P}(\mathcal{V})^{\mathrm{dlt}},\Delta^{\mathrm{dlt}})$ has dlt singularities;
\item $f^{\ast}(K_{\mathbb{P}(\mathcal{V})}+D)\sim_{\mathbb{Q}}K_{\mathbb{P}(\mathcal{V})^{\mathrm{dlt}}}+\Delta^{\mathrm{dlt}}$;
\item $K_{\mathbb{P}(\mathcal{V})^{\mathrm{dlt}}}+\Delta^{\mathrm{dlt}}$ is $f$-nef;
\item $\mathbb{P}(\mathcal{V})^{\mathrm{dlt}}$ is $\mathbb{Q}$-factorial.
\end{enumerate}

 Thus we have that
\begin{center}
$\xymatrix{\mathbb{P}(\mathcal{V})^{\mathrm{dlt}}\ar[r]^f&\mathbb{P}(\mathcal{V})\ar[r]^\pi&C}$.
\end{center}
We set that $g=\pi\circ f$ and $\xi'=f^{\ast}\xi$. Then we have that
\begin{center}
$f^{\ast}(K_{\mathbb{P}(\mathcal{V})}+D+(n-1)\xi)\equiv_{\mathrm{num}}K_{\mathbb{P}(\mathcal{V})^{\mathrm{dlt}}}+\Delta^{\mathrm{dlt}}+(n-1)\xi'$
\end{center} 

As $f$ is surjective, we have that $f_{\ast}$ preserves numerical equivalence. By the projection formula we have that
\begin{center}
$f_{\ast}(K_{\mathbb{P}(\mathcal{V})^{\mathrm{dlt}}}+\Delta^{\mathrm{dlt}}+(n-1)\xi') \equiv_{\mathrm{num}}f_{\ast}f^{\ast}(K_{\mathbb{P}(\mathcal{V})}+D+(n-1)\xi)=K_{\mathbb{P}(\mathcal{V})}+D+(n-1)\xi$.
\end{center}
 Hence $K_{\mathbb{P}(\mathcal{V})^{\mathrm{dlt}}}+\Delta^{\mathrm{dlt}}+(n-1)\xi'$ cannot be pseudo-effective.
So there exists an extremal ray $R$ of $\NE(\mathbb{P}(\mathcal{V})^{\mathrm{dlt}})$ such that
\begin{equation*}
(K_{\mathbb{P}(\mathcal{V})^{\mathrm{dlt}}}+\Delta^{\mathrm{dlt}}+(n-1)\xi')\cdot R <0.
\end{equation*}
For $0<\epsilon\ll 1$, we have that
\begin{equation}
\label{dim-est}
(K_{\mathbb{P}(\mathcal{V})^{\mathrm{dlt}}}+(1-\epsilon)\Delta^{\mathrm{dlt}}+(n-1)\xi')\cdot R <0.
\end{equation}

We note that $\xi'\cdot R=f^{\ast}(L)\cdot R>0$, for otherwise any curve $l$ such that $[l]\in R$ is contracted by $f$, which means $(K_{\mathbb{P}(\mathcal{V})^{\mathrm{dlt}}}+\Delta^{\mathrm{dlt}})\cdot R\geq 0$, a contradiction. Hence $R$ is in fact a $(K_{\mathbb{P}(\mathcal{V})^{\mathrm{dlt}}}+\Delta^{\mathrm{dlt}})$-negative extremal ray. 

By the contraction theorem (\emph{cf.} \citep[Theorem~1.1.(4)]{F11}, we get the contraction morphism $\mathrm{cont}_R:\mathbb{P}(\mathcal{V})^{\mathrm{dlt}}\rightarrow Y$ which contracts the ray $R$. Let $S\subset \mathbb{P}(\mathcal{V})^{\mathrm{dlt}}$ be a fiber of $\mathrm{cont}_R$. If $\mathrm{dim}(S)\geq 2$, there exists a curve $l\subset S$ that is contracted to a point by $g$. As $K_{\mathbb{P}(\mathcal{V})^{\mathrm{dlt}}}+\Delta^{\mathrm{dlt}}$ is $f$-nef, the $K_{\mathbb{P}(\mathcal{V})^{\mathrm{dlt}}}+\Delta^{\mathrm{dlt}}$-negative curve $l$ can not be contracted to a point by $f$. Hence $l$ maps finitely onto a curve $l'\subset F$. Now we have that
\begin{align*}
(K_{\mathbb{P}(\mathcal{V})^{\mathrm{dlt}}}+\Delta^{\mathrm{dlt}}+(n-1)\xi')\cdot l &=(f^{\ast}(K_{\mathbb{P}(\mathcal{V})}+D+(n-1)\xi))\cdot l\\
&=\mathrm{deg}(l/l')(K_{\mathbb{P}(\mathcal{V})}+D+(n-1)\xi)\cdot l\\
&=\mathrm{deg}(l/l')\mathcal{O}_F(d-1)\cdot l'\\
&\geq 0,
\end{align*}
a contradiction. Hence any fiber of $\mathrm{cont}_R$ has dimension at most $1$.

Let $E\subset \mathrm{exc}(\mathrm{cont}_R)$ be an irreducible component of the exceptional locus of $\mathrm{cont}_R$. We thus have that
\begin{center}
$\mathrm{dim}(E)-\mathrm{dim}(\mathrm{cont}_R(E))\leq 1$.
\end{center}

For $0<\epsilon\ll 1$, the pair $(\mathbb{P}(\mathcal{V})^{\mathrm{dlt}},(1-\epsilon)\Delta^{\mathrm{dlt}})$ has klt singularities (\emph{cf.} \citep[Proposition~2.41.]{KM98}). For  small $\epsilon$, the  divisor $-(K_{\mathbb{P}(\mathcal{V})^{\mathrm{dlt}}}+(1-\epsilon) \Delta^{\mathrm{dlt}})$ is still $\mathrm{cont}_R$-ample.
The estimate of the length of extremal ray by Kawamata (\emph{cf.} \citep[Theorem~7.46.]{deb}) for klt pairs shows that the rational curves $l\in R$ cover $E$ and there exists a rational curve $l_{\epsilon}\in R$ such that 
\begin{center}
$0<-(K_{\mathbb{P}(\mathcal{V})^{\mathrm{dlt}}}+(1-\epsilon)\Delta^{\mathrm{dlt}})\cdot l_{\epsilon}\leq 2$.
\end{center}  
 For any curve $l$ whose class $[l]$ is in $R$, we have that $\xi'\cdot l\geq 1$. Combining these two inequalities with \cref{dim-est}, we have that
\begin{equation*}
0>(K_{\mathbb{P}(\mathcal{V})^{\mathrm{dlt}}}+(1-\epsilon)\Delta^{\mathrm{dlt}}+(n-1)\xi')\cdot l_{\epsilon}\geq -2+(n-1).
\end{equation*}
Hence $n=2$. 
\end{proof}

\begin{lemma}
\label{dim-2-class}
Set $(X,L):=(\mathbb{P}(\mathcal{V}),\mathcal{O}_{\mathbb{P}(\mathcal{V})}(1))$, where $\mathcal{V}$ is a rank $2$ ample vector bundle over $\mathbb{P}^1$. Suppose that $\Delta$ is a reduced  divisor on $X$ and $K_X+\Delta+L$ is not pseudo-effective. Then we have one of the following:
\begin{enumerate}
\item Either $\Delta=\sum D_i$ where $D_i\cong \mathbb{P}^1$ are distinct fibers of the structure map $\pi:\mathbb{P}(\mathcal{V})\rightarrow \mathbb{P}^1$; or
\item $(X,L)=(\mathbb{P}(\mathcal{O}_{\mathbb{P}^1}(a)\oplus\mathcal{O}_{\mathbb{P}^1}(1)),\mathcal{O}_{\mathbb{P}(\mathcal{O}_{\mathbb{P}^1}(a)\oplus\mathcal{O}_{\mathbb{P}^1}(1))}(1))$ with $a>1$ and $D$ is the unique section of $\mathbb{P}(\mathcal{O}_{\mathbb{P}^1}(a)\oplus\mathcal{O}_{\mathbb{P}^1}(1))\rightarrow \mathbb{P}^1$ such that 
\begin{center}
$D\equiv_{\mathrm{num}}\mathcal{O}_{\mathbb{P}(\mathcal{O}_{\mathbb{P}^1}(a)\oplus\mathcal{O}_{\mathbb{P}^1}(1))}(1))-af$,
\end{center}
 where $f$ is a general fiber; or
\item $\mathcal{V}=\mathcal{O}_{\mathbb{P}}(1)\oplus \mathcal{O}_{\mathbb{P}^1}(1)$.
\end{enumerate}
\end{lemma}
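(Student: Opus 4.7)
The plan is a direct intersection-theoretic analysis on the surface $X=\mathbb{P}(\mathcal{V})$. By Grothendieck's splitting theorem and ampleness, $\mathcal{V}\cong \mathcal{O}(a)\oplus \mathcal{O}(b)$ with $a\geq b\geq 1$, so $X\cong \mathbb{F}_{a-b}$. With $\xi=L$ and $f$ a $\pi$-fibre, one has $\mathrm{Pic}(X)=\mathbb{Z}\xi\oplus\mathbb{Z}f$, $\xi^{2}=a+b$, $\xi\cdot f=1$, and the canonical bundle formula gives $K_X=-2\xi+(a+b-2)f$. I would then introduce the section $C_{-}=\xi-af$ induced by the quotient $\mathcal{V}\twoheadrightarrow\mathcal{O}(b)$, so that $C_{-}^{2}=b-a\leq 0$ and $\xi\cdot C_{-}=b$. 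When $a>b$ this class together with $[f]$ generates the effective (equivalently pseudo-effective) cone of $X$, while when $a=b$ the surface is $\mathbb{P}^{1}\times\mathbb{P}^{1}$ and the cone is generated by the two rulings.

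Writing $\Delta\num pC_{-}+qf$ with $p,q\in\mathbb{Z}_{\geq 0}$, a short calculation yields
\[K_X+\Delta+L\num (p-1)C_{-}+(b-2+q)f,\]
which fails to be pseudo-effective precisely when $p=0$ or $b-2+q<0$. In the first sub-case $\Delta\num qf$, and by adjunction any irreducible divisor numerically proportional to $f$ has non-positive arithmetic genus and is therefore itself a fibre; combined with reducedness this forces $\Delta=\sum_{i}f_{i}$, a sum of distinct fibres, i.e.\ case~(1). In the second sub-case the constraints $q\geq 0$ and $b\geq 1$ force $b=1$ and $q=0$, so $\Delta\num pC_{-}$ with $p\geq 1$.

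To pin down $\Delta$ in this second sub-case, I would decompose the reduced divisor as $\Delta=\sum_{j}D_{j}$ into prime components and write each $D_{j}\num r_{j}C_{-}+s_{j}f$ with $r_{j}\geq 0$. Matching classes gives $\sum r_{j}=p$ and $\sum s_{j}=0$. Using $D_{j}\cdot C_{-}\geq 0$ whenever $D_{j}\neq C_{-}$ (which forces $s_{j}\geq r_{j}(a-b)$) and $D_{j}\cdot f=r_{j}\geq 0$, the only way to satisfy $\sum s_{j}=0$ is to exclude fibre components (those contribute $s_{j}=1$) and sections other than $C_{-}$ (which contribute $s_{j}\geq a-b$), leaving $\Delta=C_{-}$ and $p=1$. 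If $a>b=1$ this is exactly case~(2) with $D\equiv \xi-af$. If instead $a=b=1$, then $\mathcal{V}\cong\mathcal{O}(1)^{\oplus 2}$ and $X\cong\mathbb{P}^{1}\times\mathbb{P}^{1}$, which is case~(3); the symmetry between the two rulings in this degenerate Hirzebruch surface explains why the lemma does not further restrict $\Delta$ there.

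The main obstacle I expect is the component-matching step: one must rigorously exclude sections of $\pi$ distinct from $C_{-}$ and irreducible multisections of higher degree from appearing in $\Delta$. The crucial ingredient is the sign bound $s_{j}\geq r_{j}(a-b)$, which becomes strict for any non-trivial section when $a>b$, so the balance $\sum s_{j}=0$ collapses to a single component $C_{-}$. Everything else in the argument is a routine bookkeeping calculation in $\mathrm{Pic}(\mathbb{F}_{a-b})$.
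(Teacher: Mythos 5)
Your proposal is correct and follows essentially the same route as the paper's proof: identify $X$ as a Hirzebruch surface, express $K_X+\Delta+L$ in terms of the negative section $C_-\equiv\xi-af$ and the fibre $f$, and use the fact that these two classes generate the (pseudo-)effective cone to force either $\Delta$ to be a union of distinct fibres or $b=1$ and $\Delta=C_-$ (resp.\ no constraint when $a=b=1$). The differences are only cosmetic --- the paper twists to Hartshorne's normalization of $\mathbb{F}_e$ and argues component by component using the classification of irreducible curve classes in Hartshorne V.2.18 and V.2.20, whereas you work directly in the basis $\{C_-,f\}$ and balance the coefficients of all of $\Delta$ at once; your version also uniformly covers $a=b\geq 2$ (which lands in case (1)), a subcase the paper's explicit reduction to $a>b$ passes over.
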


\begin{proof}
 As $\mathcal{V}$ is ample, we know that $\mathcal{V}\cong\mathcal{O}_{\mathbb{P}^1}(a)\oplus\mathcal{O}_{\mathbb{P}^1}(b)$ with $a,b > 0$. We may suppose that $a\geq b>0$. If $a=b=1$, then $(X,L)\cong (\mathbb{P}^1\times \mathbb{P}^1, (1,1))$ which is a rank $4$ hyperquadric in $\mathbb{P}^3$. Thus we are in case $(3)$ of \cref{dim-2-class}.

Hence in the rest we only consider $a>b$. We follow the convention in \citep[Notation~V.2.8.1]{har78} in this proof. Set $e:=a-b\geq 0$. Set $\mathcal{W}:=\mathcal{V}\otimes \mathcal{O}_{\mathbb{P}^1}(-a)$. We have that $X_{e}:=\mathbb{P}(\mathcal{W})\cong \mathbb{P}(\mathcal{V})$. 
We denote by $p:\mathbb{P}(\mathcal{W})\rightarrow \mathbb{P}^1$ the projection. By \citep[Lemma~II.7.9]{har78}, we know that $(X,L)\cong (X_e, \mathcal{O}_{X_e}(1)\otimes p^{\ast}(\mathcal{O}_{\mathbb{P}^1}(a)))$.
  We denote the general fiber of $p$ by $f'$. Note that $\mathcal{W}$ satisfies the assumption in \citep[Proposition~2.8.]{har78}. Hence there exists a section $C_0$ of $p$ such that $\mathcal{O}_{X_e}(1)\cong \mathcal{O}_{X_e}( C_0)$. \citep[Proposition~2.9.]{har78} implies $C_0^2=-e$. Hence if $a\neq b$, we have that $C_0$ is unique.
 We know that $L\equiv_{\mathrm{num}} C_0+af'$ and \citep[Lemma~2.10.]{har78} implies that $K_{X_e}\sim -2C_0+(-2-e)f'$. 
 Let $D$ be a component of $\Delta$. Then $K_X+D+L$ is not pseudo-effective. Assume that $D\sim xC_0+d'f'$, with $x,d'$ being integers. We have that
 \begin{equation*}
 K_X+L+D \num (x-1)C_0+(d'+b-2)f'.
\end{equation*} 
 As $D$ is a prime divisor, \citep[Corollary~V.2.18-(b)]{har78} implies one of the following:
\begin{enumerate}
\item[(i)] $x=0, d'=1$, and $K_X+L+D \equiv_{\mathrm{num}} -C_0+(b-1)f'\notin \mathrm{Pseff}(X)$;
\item[(ii)] $x=1, d'=0$ and $K_X+L+D \equiv_{\mathrm{num}} (b-2)f'$, which is not pseudo-effective if and only if $b=1$;
\item[(iii)] $x>0, d'>xe$. Note that $d'+b-2\geq 0$. So we have that $K_X+L+D$, being a positive combination of effective divisors,  is pseudo-effective;
\item[(iv)] $e>0, x>0,$ and $d'=xe$. Again we have that $d'+b-2\geq 0$. So $K_X+L+D$, being a positive combination of effective divisors,  is pseudo-effective.
\end{enumerate}

In case $(i)$, the divisor $D$ is a fiber of $p$, which maps isomorphically to a fiber of $\pi$ under the canonical isomorphism $\mathbb{P}(\mathcal{W})\cong \mathbb{P}(\mathcal{V})$.

In case $(ii)$, as $D$ is irreducible, \citep[Proposition~V.2.20-(a)]{har78} implies $D=C_0$. Hence $D$ is the unique section of $\pi:\mathbb{P}(\mathcal{O}_{\mathbb{P}^1}(a)\oplus\mathcal{O}_{\mathbb{P}^1}(1))\rightarrow \mathbb{P}^1$ such that $D\equiv_{\mathrm{num}} \xi-\pi^{\ast}(\mathcal{O}_{\mathbb{P}^1}(a))$. 
If we have another component $D'$ of $\Delta$, we know that $D'\num f$. Then 
\[K_X+L+D+D'\num 0\]
is pseudo-effective, a contradiction. Thus $\Delta=D$ is irreducible and we are in case $(2)$ of \cref{dim-2-class}.

 Write $\Delta=\sum D_i$. If we don't have any component $D$ of $\Delta$ such that $D\sim C_0$, each $D_i$ will be a fiber. Then
\[K_X+L+\Delta\num -C_0+(b-2+k)f'\]
 is not pseudo-effective, where $k$ is the number of components of $\Delta$. Hence we are in case $(1)$ of \cref{dim-2-class}.
\end{proof}

\section{Semi-log canonical polarized varieties}
\label{slcpv}
The hypothesis for the pair $(X,\Delta)$ in \cref{Clr} alludes to the normalization of a slc variety together with its conductor divisors. In this section, we will show how to use \cref{Clr} to classifying polarized slc varieties.

For the basic definition and statements of slc varieties we refer to \citep[Chapter~5]{kol13}. See also \citep[Chapter~2.5]{Liu-thesis} for an account in our setup.

We recall the definition of \textit{conductor}.

\begin{definition}[conductor]
\label{cond}
Let $X$ be a reduced scheme and $\pi:\bar{X}\rightarrow X$ its normalization. The \textit{conductor ideal}
\begin{center}
$\cond_X:=\Hom_{\mathcal{O}_X}(\pi_{\ast}\mathcal{O}_{\bar{X}},\mathcal{O}_X)$
\end{center}
is the largest ideal sheaf of $\mathcal{O}_X$ such that it is also an ideal sheaf of $\pi_{\ast}\mathcal{O}_{\bar{X}}$. As $\pi$ is finite, we have a unique ideal sheaf $\cond_{\bar{X}}$ of $\bar{X}$ that corresponds to $\cond_X$.

We define the conductor schemes to be 
\begin{center}
$D:=\mathrm{Spec}(\mathcal{O}_X/\cond_X)$ and $\bar{D}:=\mathrm{Spec}(\mathcal{O}_{\bar{X}}/\cond_{\bar{X}})$
\end{center}
They fit into the Cartesian square
\begin{center}
$\xymatrix{\bar{D}\ar[d]\ar[r]&\bar{X}\ar[d] \\
D\ar[r]& X}.$
\end{center}
\end{definition}

Note that when $X$ is demi-normal, the conductors $D$ and $\bar{D}$ are reduced divisors. We now give our definition of slc singularities.
\begin{definition}[\protect{\citep[Definition-Lemma~5.10]{kol13}}]
\label{slc}
Let $(X,\Delta)$ be a pair with $X$ demi-normal. Let $\pi:\bar{X}\rightarrow X$ be its normalization, the conductors $\bar{D}$ and $D$ as in \cref{cond}. The pair $(X,\Delta)$ is called \textit{semi-log canonical} or \textit{slc} if $(\bar{X},\bar{D}+\bar{\Delta})$ is log canonical.
\end{definition}

\subsection*{Proof of \cref{Slccase}}
\begin{proof}
We know by definition that $(\bar{X},\bar{D})$ is log canonical. Note that the absolute normalization $\pi:\bar{X}\rightarrow X$ is finite (\emph{cf.}  \cite[\href{https://stacks.math.columbia.edu/tag/0BXR}{Tag 0BXR}]{stacks-project}). Hence $\pi^{\ast}(L)$ is ample.  We have that
\begin{center}
$\pi^{\ast}(K_X+(n-1)L)=K_{\bar{X}}+\bar{D}+(n-1)\pi^{\ast}(L)$.
\end{center}

Let $C\subset X$ be a movable curve in $X$ such that $(K_X+(n-1)L)\cdot C<0$. Let $C'\subset \bar{X}$ be a movable curve that dominates $C$. Then by the projection formula we have that
\begin{equation*}
K_{\bar{X}}+\bar{D}+(n-1)\pi^{\ast}(L))\cdot C'=\deg(C'/C)(K_X+(n-1))\cdot C<0 .
\end{equation*}
Hence by the BDPP theorem, the divisor $K_{\bar{X}}+\bar{D}+(n-1)\pi^{\ast}(L)$ is not pseudo-effective.

Note that $D$ and $\bar{D}$ are reduced. We denote by $\bar{D}^\nu$, $D^\nu$ respectively their normalizations. Then $\pi$ induces a degree $2$ map $\nu:\bar{D}^\nu\rightarrow D^\nu$ and there is a Galois involution $\tau:\bar{D}^\nu\rightarrow \bar{D}^\nu$ which is generically fixed point free (\emph{cf.} \citep[5.2]{kol13}).  Thus we have the following diagram
\begin{center}
$\xymatrix{\bar{D}^\nu \ar@(ul,dl)_{\tau}\ar[d]^\nu \ar[r] &\bar{D} \ar[d] \ar[r] & \bar{X} \ar[d]^{\pi}\\
D^\nu\ar[r] & D\ar[r] & X}$.
\end{center}

Since $\nu:\bar{D}^\nu \rightarrow D^\nu$ has degree $2$, we have by the projection formula that
\begin{center}
$\pi^{\ast}(L)|_{\bar{D}^\nu}^{n-1}=\mathrm{deg}(\nu)\cdot (L|_{D^\nu}^{n-1})\in 2\mathbb{Z}$.
\end{center}

We now apply \cref{Clr} to $(\bar{X},\bar{D},\pi^{\ast}(L))$. We have one of the following:
\begin{enumerate}
\item $(\bar{X},\pi^{\ast}L)\cong (\mathbb{P}^n,\mathcal{O}_{\mathbb{P}^n}(1))$,. The conductor $\bar{D}= H$ is a prime divisor where $H$ is a hyperplane of $ \mathbb{P}^n$;

\item[(2.i)] There is a $(\mathbb{P}^{n-1},\mathcal{O}_{\mathbb{P}^{n-1}}(1))$-bundle $(\mathbb{P}(E),\mathcal{O}_{\mathbb{P}(E)}(1))$ over a smooth curve $C$, and a birational morphism $\mu:\mathbb{P}(E)\rightarrow \bar{X}$ such that $\mu^{\ast}(\pi^{\ast}L)\cong \mathcal{O}_{\mathbb{P}(E)}(1)$ and $\bar{D}=\sum F_i$ is a finite sum where $F_i\cong\mu(\mathbb{P}^{n-1})$ are images of distinct general fibers by $\mu$ and $\deg(\mathbb{P}^{n-1}/F_i)=1$ ;

\item[(2.ii)] $(\bar{X},\pi^{\ast}L)=(\mathbb{P}(\mathcal{O}_{\mathbb{P}^1}(a)\oplus\mathcal{O}_{\mathbb{P}^1}(1)),\mathcal{O}_{\mathbb{P}(\mathcal{O}_{\mathbb{P}^1}(a)\oplus\mathcal{O}_{\mathbb{P}^1}(1))}(1))$ with $a>1$ and $\bar{D}=C$, where  $C$ is the unique section of $\mathbb{P}(\mathcal{O}_{\mathbb{P}^1}(a)\oplus\mathcal{O}_{\mathbb{P}^1}(1))\rightarrow \mathbb{P}^1$ such that 
\begin{center}
$C\equiv_{\mathrm{num}}\mathcal{O}_{\mathbb{P}(\mathcal{O}_{\mathbb{P}^1}(a)\oplus\mathcal{O}_{\mathbb{P}^1}(1))}(1))-af$,
\end{center}
 where $f$ is a general fiber;

\item[(3.i)] $(\bar{X},\pi^{\ast}L)\cong (Q,\mathcal{O}_{\mathbb{P}^{n+1}}(1))$, where $Q\subset \mathbb{P}^{n+1}$ is a $\mathrm{rk}(Q)=3$ hyperquadric, the divisor $\bar{D}$ is a hyperplane in $Q$ and  $[\bar{D}]=\dfrac{1}{2}[H\cap Q]$ where $H$ is a hyperplane in $\mathbb{P}^{n+1}$;

\item[(3.ii)] $(\bar{X},\pi^{\ast}L)\cong (Q,\mathcal{O}_{\mathbb{P}^{n+1}}(1))$, where $Q\subset \mathbb{P}^{n+1}$ is a $\mathrm{rk}(Q)=4$ hyperquadirc. If we write $Q=\mathrm{Proj}\left( \dfrac{\mathbb{C}[x_0, \ldots, x_{n+1}]}{(x_0 x_1-x_2 x_3)}\right) $, then $\bar{D}$ is prime and $\bar{D}$ is the cone with vertex $\mathbb{P}^{n-3}$ over $\mathbb{P}^{1}\times \mathrm{pt}$ or $\mathrm{pt}\times \mathbb{P}^1$. In particular, $\bar{D}\cong \mathbb{P}^{n-1}$.
\end{enumerate}

In case $(1)$, we have that $\bar{D}^\nu=\bar{D}\cong \mathbb{P}^{n-1}$ is smooth and $\pi^{\ast}(L)|_{\bar{D}^\nu}=\mathcal{O}_{\mathbb{P}^{n-1}}(1)$. As $\pi^{\ast}(L)|_{\bar{D}^\nu}^{n-1}=1$ is odd. We exclude case $(1)$.

In case $(2.i)$, we have that $\bar{D}=\sum_{1\leq i\leq k} F_i$ for a natural number $k$ and the morphism 
$\mu:\coprod_{1\leq i\leq k}\mathbb{P}^{n-1}\rightarrow \bar{D}$ factors through $\bar{D}^\nu\rightarrow \bar{D}$ (\emph{cf.} \cite[\href{https://stacks.math.columbia.edu/tag/035Q}{Tag 035Q}]{stacks-project}-$(4)$). 
Hence $\pi^{\ast}(L)|_{\bar{D}^\nu}^{n-1}=k$ and $k$ is even. As $\deg({\mathbb{P}^{n-1}/F_i})=1$, we have that 
\begin{equation*}
\pi^{\ast}(L)|_{F_i}^{n-1}=\deg({\mathbb{P}^{n-1}/F_i})(\mathcal{O}_{\mathbb{P}(E)}(1)|_{\mathbb{P}^{n-1}})^{n-1}=1.
\end{equation*}
Thus each irreducible component of $D$ has pre-image consisting of two of the $F_i$'s. We have thus the diagram
\begin{center}
$\xymatrix{\mathbb{P}(E)\ar[r]^{\mu}\ar[d]_p &\bar{X}\ar[r]^{\pi}&X\\
C}$.
\end{center}

Set $k=2m$. We write $D=\sum_{1\leq i\leq m} D_i$. We denote the two components of $\bar{D}$ that are mapped onto $D_i$ by $F_{i,1}$ and $F_{i_2}$. Let $x_{i,1}$ (\emph{resp.} $x_{i,2}$) be the point of $C$ such that $\mu(p^{-1}(x_{i,1}))=F_{i,1}$ (\emph{resp.} $\mu(p^{-1}(x_{i,2}))=F_{i,2}$). As $C$ is smooth, we may glue $x_{i,1}$ and $x_{i,2}$. We thus get a nodal curve $C'$ together with a quotient morphism $q:C\rightarrow C'$ such that there exists a rank $n$ vector bundle $E'$ on $C'$ satisfying $q^{\ast}(E')=E$. The morphism $\pi\circ \mu$ thus factors through $\mathbb{P}(E)$, \textit{i.e.} we have the following commutative diagram:
\begin{center}
$\xymatrix{
\mathbb{P}(E)\ar[r]^{\mu}\ar[d] &\bar{X}\ar[d]^{\pi}\\
\mathbb{P}(E')\ar[r]^{\mu'}&X
}$.
\end{center}
The morphism $\mu'$ is birational. If we denote $x_i=p(x_{i,1})$ and $F_i$ the fiber of $x_i$ in $\mathbb{P}(E')$, we have that $D_i=\mu'(F_i)$. Thus we have the result of \cref{Slccase}.

In case $(2.ii)$, we have that $\pi^{\ast}(L)\cdot C=a-e=1$. Hence we exclude this case.

In case $(3.i)$, the conductor $\bar{D}$ is irreducible and $\pi^{\ast}(L)|_{\bar{D}^n}^{n-1}=1$. Hence we also exclude this case.

In case $(3.ii)$, the conductor $\bar{D}\cong \mathbb{P}^{n-1}$ and $\pi^{\ast}(L)|_{\bar{D}}=\mathcal{O}_{\mathbb{P}^{n-1}}(1)$ which is not divisible by $2$. Hence we exclude this case, too.
\end{proof}

\bibliographystyle{alpha}
\bibliography{polarised-slc-varieties-with-high-nef-value}

\begin{thebibliography}{BCHM10}

\bibitem[AD14]{AD14}
Carolina Araujo and St{\'e}phane Druel.
\newblock On codimension 1 del {Pezzo} foliations on varieties with mild
  singularities.
\newblock {\em Math. Ann.}, 360(3-4):769--798, 2014.

\bibitem[And95]{And95}
Marco Andreatta.
\newblock Some remarks on the study of good contractions.
\newblock {\em manuscripta mathematica}, 87, 1995.

\bibitem[And13]{And13}
Marco Andreatta.
\newblock Minimal model program with scaling and adjunction theory.
\newblock {\em Int. J. Math.}, 24(2):13, 2013.
\newblock Id/No 1350007.

\bibitem[BCHM10]{BCHM}
Caucher {Birkar}, Paolo {Cascini}, Christopher~D. {Hacon}, and James
  {McKernan}.
\newblock {Existence of minimal models for varieties of log general type}.
\newblock {\em {J. Am. Math. Soc.}}, 23(2):405--468, 2010.

\bibitem[BS95]{BS95}
Mauro~C. Beltrametti and Andrew~J. Sommese.
\newblock {\em The adjunction theory of complex projective varieties},
  volume~16 of {\em De Gruyter Expo. Math.}
\newblock Berlin: de Gruyter, 1995.

\bibitem[Deb01]{deb}
Olivier Debarre.
\newblock {\em Higher-Dimensional Algebraic Geometry}.
\newblock Universitext. Springer, New York, NY, 2001.

\bibitem[FM21]{FM21}
Osamu Fujino and Keisuke Miyamoto.
\newblock A characterization of projective spaces from the {Mori} theoretic
  viewpoint.
\newblock {\em Osaka J. Math.}, 58(4):827--837, 2021.

\bibitem[Fuj90]{fujita_1990}
Takao Fujita.
\newblock {\em Classification Theory of Polarized Varieties}.
\newblock London Mathematical Society Lecture Note Series. Cambridge University
  Press, 1990.

\bibitem[{Fuj}11]{F11}
Osamu {Fujino}.
\newblock {Fundamental theorems for the log minimal model program}.
\newblock {\em {Publ. Res. Inst. Math. Sci.}}, 47(3):727--789, 2011.

\bibitem[{Ful}98]{ful98}
William {Fulton}.
\newblock {\em {Intersection theory}}, volume~2.
\newblock Berlin: Springer, 1998.

\bibitem[{Ful}11]{Fulger}
Mihai {Fulger}.
\newblock {The cones of effective cycles on projective bundles over curves}.
\newblock {\em {Math. Z.}}, 269(1-2):449--459, 2011.

\bibitem[{Har}77]{har78}
Robin {Hartshorne}.
\newblock {\em {Algebraic geometry}}, volume~52.
\newblock Springer, Cham, 1977.

\bibitem[HM07]{HM07}
Christopher~D. {Hacon} and James {McKernan}.
\newblock {On Shokurov's rational connectedness conjecture}.
\newblock {\em {Duke Math. J.}}, 138(1):119--136, 2007.

\bibitem[H{\"o}r14]{Hor23}
Andreas H{\"o}ring.
\newblock Twisted cotangent sheaves and a {Kobayashi}-{Ochiai} theorem for
  foliations.
\newblock {\em Ann. Inst. Fourier}, 64(6):2465--2480, 2014.

\bibitem[HP19]{HP18}
Andreas {H\"oring} and Thomas {Peternell}.
\newblock {Algebraic integrability of foliations with numerically trivial
  canonical bundle}.
\newblock {\em {Invent. Math.}}, 216(2):395--419, 2019.

\bibitem[KM98]{KM98}
Janos Koll{\'a}r and Shigefumi Mori.
\newblock {\em Birational Geometry of Algebraic Varieties}.
\newblock Cambridge Tracts in Mathematics. Cambridge University Press, 1998.

\bibitem[Kol13]{kol13}
J{\'a}nos Koll{\'a}r.
\newblock {\em Singularities of the Minimal Model Program}.
\newblock Cambridge Tracts in Mathematics. Cambridge University Press, 2013.

\bibitem[Laz04a]{Laz03}
Robert Lazarsfeld.
\newblock {\em Positivity in algebraic geometry. {I}. {Classical} setting: line
  bundles and linear series}, volume~48 of {\em Ergeb. Math. Grenzgeb., 3.
  Folge}.
\newblock Berlin: Springer, 2004.

\bibitem[Laz04b]{Laz04}
Robert Lazarsfeld.
\newblock {\em Positivity in Algebraic Geometry II}.
\newblock Ergebnisse der Mathematik und ihrer Grenzgebiete. 3. Folge / A Series
  of Modern Surveys in Mathematics. Springer-Verlag Berlin Heidelberg, 2004.

\bibitem[Liu22]{Liu-thesis}
Zhining Liu.
\newblock {\em Two results on the classification of singular spaces with
  semi-negative canonical class}.
\newblock PhD thesis, Universit{\'e} de Rennes 1, July 2022.

\bibitem[{Sta}22]{stacks-project}
The {Stacks project authors}.
\newblock The stacks project.
\newblock \url{https://stacks.math.columbia.edu}, 2022.

\end{thebibliography}

\end{document}